\newcommand{\R}{\mathbb{R}}
\DeclareFontFamily{OT1}{pzc}{}
\DeclareFontShape{OT1}{pzc}{m}{it}{<-> s * [1.200] pzcmi7t}{}
\DeclareMathAlphabet{\mathpzc}{OT1}{pzc}{m}{it}
\newcommand*\mcapinn[2]{\vcenter{\hbox{$\mathsurround=0pt
			\ifx\displaystyle#1\textstyle\else#1\fi\bigcap$}}}
\newcommand*\mcup{\mathbin{\mathpalette\mcupinn\relax}}
\newcommand*\mcupinn[2]{\vcenter{\hbox{$\mathsurround=0pt
			\ifx\displaystyle#1\textstyle\else#1\fi\bigcup$}}}
\DeclareMathOperator{\SO}{SO}
\DeclareMathOperator{\so}{\mathfrak{so}}
\newtheorem{theorem}{Theorem}
\newtheorem{definition}{Definition}
\newtheorem{lemma}{Lemma}
\newtheorem{remark}{Remark}
\newtheorem{proposition}{Proposition}
\title{\bf Controllability and Accessibility on Graphs for\\ Bilinear Systems over Lie Groups}
\date{}
 	\author{Xing Wang\thanks{Key Laboratory of Mathematics Mechanization, Institute of Systems Science, Academy of Mathematics and Systems Science, Chinese Academy of Sciences, Beijing 100190, China; School of Mathematical Sciences, University of Chinese Academy of Sciences, Beijing 100049, China. (wangxing17@amss.ac.cn)}, Bo Li\thanks{Key Laboratory of Mathematics Mechanization, Academy of Mathematics and Systems Science, Chinese Academy of Sciences, Beijing 100190, China. (libo@amss.ac.cn)}, Jr-Shin Li\thanks{Department of Electrical and Systems Engineering, Washington University, St. Louis, MO 63130, USA. (jsli@wustl.edu)},
 	Ian R. Petersen\thanks{Research School of Electrical, Energy and Materials Engineering, Australian National University, Canberra, ACT 0200, Australia. (ian.petersen@anu.edu.au)}, Guodong Shi\thanks{Australian Center for Field Robotics, School of Aerospace, Mechanical and Mechatronic Engineering, The University of Sydney, NSW 2006, Australia. (guodong.shi@sydney.edu.au)}}
\begin{document}
 	\maketitle
 	
 	\begin{abstract}
 		This paper presents  graph theoretic conditions  for the controllability and accessibility of bilinear systems over the special orthogonal group, the special linear group and the general linear group, respectively, in the presence of drift terms. Such bilinear systems naturally induce two interaction graphs: one graph from the drift, and another from the controlled  dynamics. As a result, the system controllability or accessibility becomes a property of the two graphs in view of the classical Lie algebra rank condition. We establish a systemic way of transforming the Lie bracket operations in the underlying Lie algebra, into specific  operations of removing or creating links over the drift and controlled interaction  graphs.  As a result, we  establish a series of graphical conditions for the controllability and accessibility of such bilinear systems, which rely only on the connectivity of the union of  the drift and controlled interaction graphs. We present examples to illustrate the validity of the established results, and show that the proposed conditions are in fact considerably tight.
 	\end{abstract}
 
\section{Introduction}
Bilinear  systems are a simple yet important class of  nonlinear  dynamical systems, where the system evolution is influenced by the product of, and therefore depends bilinearly on, the system state and control actions \cite{Elliott2009}. The study of bilinear systems was originated in the 1970s with a primary interest in investigating dynamical systems with geometrical state constraints \cite{Elliott1971,Brockett1973,Aoki1975}.  A series of seminal works were initially  developed on the controllability and accessibility theory of bilinear systems over groups \cite{H1972,J1975,li2019}; and  the discovery of   the so-called  Lie algebra rank condition for bilinear controllability, opened  a new era of nonlinear system theory where differential geometry insights can be built into feedback control principles.  In subsequent years  the literature has found much improved theoretical understandings, and applications  of bilinear systems in the fields of engineering, economic, and quantum systems \cite{Khapalov1996,altafini2002,albertini2003,dir08,alb02,qi2020} .

In the past decade, a new thriving line of research in control systems has involved the control of multi-agent or networked systems, where subsystems are interconnected according to a graph representing the system interaction structure \cite{JI2017,Chen2017,jad03,mesbahi}.  Graph theoretic tools have become key method for the analysis and controller design of network systems\cite{C2015,B2007}. On one hand, classical control concepts such as stability, controllability, and observability were shown to be closely related to graphic theoretic properties; e.g., connectivity, of underlying interaction graphs \cite{Rahmani09,ji2009,Parlangeli12,Gharesifard17,Chen15}. On the other hand, distributed control synthesis provided scalable and robust control solutions without relying on centralized sensing and decisions \cite{mesbahi}.

Recently, there has been an emerging effort in bringing the graph-theoretic analysis to the study of bilinear control systems. In \cite{structural}, a framework for the structural controllability of bilinear control systems was proposed, where it was shown that the connectivity of the underlying interaction graph may determine the structural controllability of several classes of bilinear control systems over matrix groups. In \cite{li2017}, a graphical notion of permutation cycles was introduced to bilinear systems for the characterization of controllability over the special orthogonal group. These results  provided a new and interesting interpretation of the classical  Lie algebra rank condition from a graph theory perspective, and a direct benefit is a reduced computational cost in checking the controllability of high-dimensional bilinear systems. These existing results mainly focused on driftless bilinear systems.

In this paper, we study graph theoretic conditions  for the controllability and accessibility of bilinear systems, with or without drift terms. In the presence of drift, a bilinear system possesses two interaction graphs, the graph from the drift dynamics, and the graph from the controlled  dynamics. As a result, the system controllability or accessibility is determined by the coupling of the two graphs, which is difficult to understand. For bilinear systems over the special orthogonal group, the special linear group and the general linear group, we establish a systemic way of transforming the Lie bracket operations in the underlying Lie algebra, into operations of removing or creating links on the drift and controlled interaction  graphs.  As a result, we successfully establish a series of purely graphical conditions for the controllability and accessibility of such bilinear systems, where the connectivity of the union of  the drift and controlled interaction graphs turns  out to be critical. We also present constructive examples showing that these graphical conditions are in fact generically tight. Some preliminary results of the paper will be presented at the 59th IEEE Conference on Decision and Control (CDC), December 2020 \cite{wang2020}.

The remainder of the paper is organized as follows. In Section \ref{sec:pre}, we present some preliminary concepts and results for bilinear systems and graph theory. Section \ref{sec:orthogonal}, Section \ref{sec:special} and Section \ref{sec:general} present our results on bilinear systems over the special orthogonal group, the special linear group and the general linear group, respectively. Finally a few concluding remarks are given in Section \ref{sec:conc}.

\section{Preliminaries}\label{sec:pre}
\subsection{Bilinear Control Systems on Connected Lie Groups}
Let $\mathbf{G}$ be a connected Lie group and $\mathfrak{g}$ be its corresponding Lie algebra. We consider the following bilinear control system over $\mathbf{G}$:
\begin{align}\label{bilinear}
	\dot{X}(t)= \mathsf{B}_0 X(t)+\Big(\sum_{i=1}^m u_i(t) \mathsf{B}_i\Big) X(t), \ \ X(0)=X_{0},
\end{align}
where $X(t)\in \mathbf{G}, \mathsf{B}_i\in \mathfrak{g}$ for $i=0,\dots,m$, and $u_i(t) \in \mathbb{R}$ are piecewise constant control signals for $i=1,\dots,m$.
For $T\geq 0$, the set $\mathcal{R}_{T}(X_{0})$ consists of the points in $\mathbf{G}$ that are {\it attainable} from $X_{0}$ at time $T$, i.e., all terminal points $X(T)$ of solutions of system (\ref{bilinear}) originating at $X(0)=X_{0}$.

The {\it attainable set} $\mathcal{R}(X_{0})$ then is defined as the union of such sets $\mathcal{R}_{T}(X_{0})$ for all $T\geq0$;  i.e.,
$\mathcal{R}(X_{0}):=\bigcup_{T\geq0}\mathcal{R}_{T}(X_{0})\subset\mathbf{G}.$
Let $I$ be the identity of $\mathbf{G}$. From the right invariance of the system (\ref{bilinear}), it follows trivially that $\mathcal{R}(X_{0})=\mathcal{R}(I)X_{0}$ for all $X_{0}\in\mathbf{G}$.
It is easily seen that $\mathcal{R}(I)$ is a sub-semigroup of $\mathbf{G}$, which is called the system semigroup associated with (1).

\begin{definition}\label{def1}{\rm(\cite{dir08})}
	The system $(\ref{bilinear})$ is called accessible  if the semigroup $\mathcal{R}(I)$ has an interior point in $\mathbf{G}$; and controllable, if $\mathcal{R}(I)=\mathbf{G}$.
\end{definition}

The Lie algebra $\mathfrak{g}$ is a vector space that is closed under the Lie bracket operation. So if $A, B \in \mathfrak{g}$, then $[A,B]=AB-BA \in \mathfrak{g}$.
For any subset $\mathfrak{C}$ of $\mathfrak{g}$, its generated Lie subalgebra, denoted by $\mathfrak{C}_{\rm LA}$, is the smallest Lie subalgebra within $\mathfrak{g}$ that contains $\mathfrak{C}$.
The system Lie algebra of $(\ref{bilinear})$ is given as $\{\mathsf{B}_0,\mathsf{B}_1,\dots,\mathsf{B}_m\}_{\rm LA}$,
where $\{\mathsf{B}_0,\mathsf{B}_1,\dots,\mathsf{B}_m\}_{\rm LA}$ is the generated Lie subalgebra of  $\mathsf{B}_0,\mathsf{B}_1,\dots,\mathsf{B}_m$.
The algebraic criteria developed in \cite{H1972, J1975, dir08} can be used to verify the accessibility  and controllability of the system $(\ref{bilinear})$ by exploiting the algebraic structure of the system Lie algebra.

\begin{theorem}\label{basic} {\rm (See \cite{H1972,J1975,dir08})}
	$(i)$ The system $(\ref{bilinear})$ is accessible on the Lie group $\mathbf{G}$ if and only if the system Lie algebra satisfies $\{\mathsf{B}_0,\mathsf{B}_1,\dots,\mathsf{B}_m\}_{\rm LA}=\mathfrak{g}$.
	
	$(ii)$ Suppose $\mathsf{B}_0=0$ or the Lie group $\mathbf{G}$ is compact.  Then the system $(\ref{bilinear})$ is controllable on the Lie group $\mathbf{G}$ if and only if it is accessible on the Lie group $\mathbf{G}$.
\end{theorem}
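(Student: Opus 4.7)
The plan is to prove part (i) via two classical constructions for right-invariant control systems. For the necessity direction, suppose $\mathfrak{h}:=\{\mathsf{B}_0,\dots,\mathsf{B}_m\}_{\rm LA}$ is a proper Lie subalgebra of $\mathfrak{g}$, and let $\mathbf{H}\subset\mathbf{G}$ denote the corresponding connected Lie subgroup. Since each right-invariant vector field $X\mapsto\mathsf{B}_i X$ with $\mathsf{B}_i\in\mathfrak{h}$ is tangent to the right cosets of $\mathbf{H}$, every trajectory of (\ref{bilinear}) starting at $I$ is confined to the immersed submanifold $\mathbf{H}$, which has strictly smaller dimension than $\mathbf{G}$. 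Therefore $\mathcal{R}(I)\subseteq\mathbf{H}$ has empty interior in $\mathbf{G}$, contradicting accessibility.

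For sufficiency, I would invoke the orbit theorem of Sussmann together with Krener's accessibility theorem. Using the standard commutator identity $\exp(-sX)\exp(-tY)\exp(sX)\exp(tY)=\exp(st[X,Y]+O((s+t)^{3}))$ applied to forward flows of the controlled vector fields $\mathsf{B}_0+\sum_i u_i\mathsf{B}_i$, one constructs inside $\mathcal{R}(I)$ a set whose tangent cone at some reachable point contains all of $\{\mathsf{B}_0,\dots,\mathsf{B}_m\}_{\rm LA}$. Under the hypothesis $\{\mathsf{B}_0,\dots,\mathsf{B}_m\}_{\rm LA}=\mathfrak{g}$ this tangent cone has full dimension, and the implicit function theorem then produces an open subset of $\mathbf{G}$ inside $\mathcal{R}(I)$.

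For part (ii), the two cases proceed differently. If $\mathsf{B}_0=0$, the system is invariant under time reversal with sign change: given a trajectory $X(\cdot)$ on $[0,T]$ from $I$ to $g$, the controls $v_i(t):=-u_i(T-t)$ drive $Y(t):=X(T-t)$ along (\ref{bilinear}) from $g$ back to $I$, so by right invariance $g^{-1}\in\mathcal{R}(I)$. Thus $\mathcal{R}(I)$ is a subgroup; accessibility makes it open, and an open subgroup of the connected Lie group $\mathbf{G}$ must equal $\mathbf{G}$. In the compact case, I would show that any subsemigroup $S\subseteq\mathbf{G}$ with nonempty interior equals $\mathbf{G}$: for any $g\in S$, compactness yields a convergent subsequence $g^{n_k}$, whence $g^{n_{k+1}-n_k}\to I$, so $I\in\overline{S}$; combining this with an interior point and the semigroup law forces $S=\mathbf{G}$.

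The main technical obstacle is the sufficiency direction of part (i). The commutator identity only conveys infinitesimal information, and upgrading it to the existence of an open set inside the attainable semigroup requires the full orbit-theorem machinery — in particular, verifying that iterated brackets of the piecewise constant controlled vector fields can be realized, up to higher-order terms, by admissible positive-time flows, and that the resulting directions span the tangent space in a genuinely open neighborhood rather than only formally. The semigroup-to-group step in the compact case is somewhat delicate too, but amounts to a standard Poincar\'e-recurrence argument once the interior point is in hand.
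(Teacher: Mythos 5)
This theorem is not proved in the paper at all: it is quoted as a classical result with citations to Sussmann--Jurdjevic and Dirr--Helmke, so there is no in-paper argument to compare against. Your reconstruction follows the standard literature proofs and is essentially sound: the necessity of (i) via confinement of $\mathcal{R}(I)$ to the connected subgroup $\mathbf{H}$ integrating the system Lie algebra, the driftless case of (ii) via time reversal with negated controls showing $\mathcal{R}(I)$ is an open subgroup of a connected group, and the compact case via the fact that a subsemigroup with nonempty interior whose closure is a group must be the whole group, are all correct and are exactly the arguments in Jurdjevic--Sussmann.

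The one place where your sketch, read literally, would not go through is the sufficiency of (i). The commutator identity $\exp(-sX)\exp(-tY)\exp(sX)\exp(tY)=\exp(st[X,Y]+\cdots)$ requires flowing \emph{backward} along the admissible vector fields, and with a nonzero drift $\mathsf{B}_0$ the field $-(\mathsf{B}_0+\sum_i u_i\mathsf{B}_i)$ is in general not of the admissible form $\mathsf{B}_0+\sum_i v_i\mathsf{B}_i$; if backward flows were allowed you would be proving openness of the \emph{orbit} (which is automatic here, the orbit being all of $\mathbf{G}$ when the rank condition holds), not of the forward attainable set. The correct replacement is Krener's argument: one considers maps $(t_1,\dots,t_k)\mapsto e^{t_kC_k}\cdots e^{t_1C_1}$ with $C_i$ admissible directions and $t_i>0$ small, and shows inductively that if every such map had rank $<\dim\mathfrak{g}$ then all iterated brackets of the $C_i$ would be tangent to a proper submanifold, contradicting the rank condition; this uses only positive-time flows and directly produces an open subset of $\mathcal{R}(I)$. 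You do flag this as the main technical obstacle and point to the right machinery, so the gap is one of execution rather than conception, but as written the second paragraph should be replaced by the Krener construction rather than the group-commutator identity.
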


\subsection{Graph Theory}
An {\em undirected} graph  $\mathrm
{G}=(\mathrm {V}, \mathrm {E})$ consists of a finite set
$\mathrm{V}$ of nodes and an edge set
$\mathrm {E}$, where  an element $e=\{i,j\}\in\mathrm {E}$ denotes   an
{\it edge}  between two distinct nodes $i\in \mathrm{V}$  and $j\in\mathrm{V}$. Two nodes $i,j\in\mathrm{E}$ are said to be {\it adjacent}
if $\{i,j\}$ is an edge  in $\mathrm{E}$. The number of adjacent nodes  of $v$ is called its {\it degree}, denoted by ${\rm deg}(v)$. A graph  $\mathrm
{G}$ is called complete if there is exactly one edge between each pair of nodes in $V$. A path between two nodes $v_1$ and $v_k$ in $\mathrm{G}$ is a sequence of distinct nodes
$
v_1v_2\dots v_{k}$
such that for any $m=1,\dots,k-1$, there is an edge between $v_m$ and $v_{m+1}$. A pair of distinct  nodes $i$ and $j$
is said to be {\it reachable} from each other if there is a path between them.  A node is always assumed to be reachable
from itself. We call graph $\mathrm{G}$ {\it connected} if every pair of distinct nodes in $\mathrm{V}$ is reachable from each other. A  subgraph of $\mathrm{G}$ associated with node set $\mathrm{V}^\ast \subseteq \mathrm{V}$, denoted as $\mathrm{G}|_{\mathrm{V}^\ast}$,
is the graph $(\mathrm{V}^\ast, \mathrm{E}^\ast)$, where $\{i,j\}\in \mathrm{E}^\ast$ if and only if $\{i,j\}\in \mathrm{E}$ for $i,j\in \mathrm{V}^\ast$.
A {\it connected component} (or just component) of  $\mathrm
{G}$ is a connected subgraph induced by some $\mathrm{V}^\ast \subseteq \mathrm{V}$, which is connected to no additional nodes in $\mathrm
{V}\setminus \mathrm{V}^\ast$. A graph $\mathrm{G}$ is a {\it bi-graph} if there is a partition of the node set into $\mathrm{V}=\mathrm{V}_{1}\bigcup \mathrm{V}_{2}$ with $\mathrm{V}_{1}$ and $\mathrm{V}_{2}$ being nonempty and mutually disjoint, where all edges are between $\mathrm{V}_{1}$ and $\mathrm{V}_{2}$.

A {\it directed} graph (digraph) $\mathcal
{G}=(\mathrm {V}, \mathcal {E})$ consists of a finite set
$\mathrm {V}$ of nodes and an arc set
$\mathcal {E}\subseteq \mathrm{V}\times\mathrm{V}$, where  $e=(i,j)\in\mathcal {E}$ denotes an
{\it arc}  from node $i\in \mathrm{V}$  to node $j\in\mathrm{V}$.
For $(i,j)\in\mathcal {E}$, we say that $i$ is an {\it in-neighbor} of $j$ and $j$ is an {\it out-neighbor} of $i$.
The number of in-neighbors and out-neighbors of $v$ is called its {\it in-degree} and {\it out-degree}, denoted as ${\rm deg}^{+}(v)$ and ${\rm deg}^{-}(v)$, respectively.
A self-loop in a digraph is an arc starting from and pointing to the same node.
A digraph $\mathcal{G}$ is {\it simple} if it has no self-loops. $\mathcal{G}$ is {\it simple complete} if $\mathcal {E}=\mathrm{V}\times\mathrm{V}\setminus \{(i,i): i\in \mathrm{V}\}$.
The digraph obtained by removing the self-loop of $\mathcal{G}$ is called the simple digraph corresponding to $\mathcal{G}$.
A directed path from a node $v_1\in \mathrm{V}$ and $v_k\in \mathrm{V}$ is a sequence of distinct nodes $v_1v_2\dots v_{k}$
such that for any $m=1,\dots,k-1$, $(v_m, v_{m+1})$ is a directed arc in $\mathcal{E}$.
We say that node $j$ is {\it reachable} from node $i$ if there is a directed path from $i$ to $j$. A digraph $\mathcal{G}$ is {\it strongly connected} if every two  nodes are mutually reachable.  A {\it weakly connected component} of a digraph $\mathcal{G}$ is a component of $\mathcal{G}$ when the directions of links are ignored.

\section{Controllability over $\SO(n)$}\label{sec:orthogonal}
The special orthogonal group,  $\SO(n)$, is  the group formed by  all $\mathbb{R}^{n\times n}$ orthogonal matrices whose determinants are equal to one. Let $E_{ij}\in\R^{n\times n}$ be the matrix with $(i,j)$-th entry being $1$ and others being $0$. Define $B_{ij}=E_{ij}-E_{ji}$. Then the set $\mathpzc{B}=\{B_{ij}:\ 1\le i<j\le n\}$ forms a basis of  the space of $n\times n$ real skew-symmetric matrices  $\so(n)$, which has the dimension $n(n-1)/2$. Clearly, $\so(n)$ is the Lie algebra of $\SO(n)$.

We consider the following bilinear system in the form of (\ref{bilinear}) that evolves over $\SO(n)$:
\begin{align}\label{eq:model1}
	\dot{X}(t)=AX(t)+\Big(\sum_{k=1}^m u_k(t)B_{i_kj_k}\Big)X(t),\ \ X(0)=I_{n},
\end{align}
where $X(t)\in \SO(n)$, $I_n$ is the $n\times n$ identity matrix, $A\in\so(n)$ is a drift dynamical term, $B_{i_kj_k}\in\mathpzc{B}$ for $k=1,\dots,m$ are controlled dynamical terms, and $u_k(t)\in\R$ is the control input which is a piecewise constant signal which multiplies $B_{i_kj_k}$ for $k=1,\dots,m$.

The system (\ref{eq:model1}) is controllable if the attainable set satisfies $\mathcal{R}(I_{n})=\SO(n)$ according to Definition \ref{def1}. Because the Lie group $\SO(n)$ is compact and connected, the system (\ref{eq:model1}) is controllable if and only if
$$\{A,B_{i_1j_i},\dots,B_{i_mj_m}\}_{\rm LA}=\so(n)$$
from Theorem \ref{basic}.

Let $\mathrm{V}=\{1,2,\dots,n\}$ be a node set. We are interested in establishing graph-theoretic conditions over the node set $\mathrm{V}$ for the controllability of the system (\ref{eq:model1}). We first introduce the following interaction graphs that naturally arise from the  drift and controlled terms $A,B_{i_1j_1},\dots,B_{i_mj_m}$ of the system (\ref{eq:model1}).

\begin{definition}\label{def2}
	$(i)$ The drift interaction  graph associated with the bilinear  system $(\ref{eq:model1})$, denoted by $\mathrm{G}_{\rm drift}$, is  defined as the undirected graph $\mathrm{G}_{\rm drift}=(\mathrm{V},\mathrm{E}_{\rm drift})$, where  $\{i,j\}\in \mathrm{E}_{\rm drift}$ if and only if $[A]_{ij}=-[A]_{ji}\neq 0$ for all $i,j\in\mathrm{V}$. \\
	$(ii)$ The controlled interaction graph associated with the bilinear  system $(\ref{eq:model1})$, denoted by $\mathrm{G}_{\rm contr}$, is  defined as the undirected graph $\mathrm{G}_{\rm contr}=(\mathrm{V},\mathrm{E}_{\rm contr})$ with $\mathrm{E}_{\rm contr}=\big\{\{i_1,j_1\},\dots, \{i_m,j_m\}\big\}$.
\end{definition}

\subsection{Main Results}
First of all, when the system \eqref{eq:model1} is driftless, i.e., $A=0$, the controllability of system \eqref{eq:model1} is entirely determined by the connectivity of $\mathrm{G}_{\rm contr}$. We present the following result.
\begin{proposition}\label{thm1}
	Suppose $A=0$. Then the system \eqref{eq:model1} is controllable on the Lie group $\SO(n)$ if and only if $\mathrm{G}_{\rm contr}$ is connected.
\end{proposition}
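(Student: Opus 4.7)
The plan is to reduce the controllability question to a Lie-algebraic generation question via Theorem~\ref{basic}, and then carry out the generation combinatorially on the graph $\mathrm{G}_{\rm contr}$. Since $A=0$ and $\SO(n)$ is compact and connected, Theorem~\ref{basic} tells us that the system $(\ref{eq:model1})$ is controllable if and only if
\[
\{B_{i_1j_1},\dots,B_{i_mj_m}\}_{\rm LA}=\so(n).
\]
So both directions of the proposition become a statement about the Lie subalgebra generated by the basis elements $B_{i_kj_k}$ indexed by the edges of $\mathrm{G}_{\rm contr}$.

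For the sufficiency direction, I would first verify the key bracket identity: for any three distinct indices $i,j,k\in\mathrm{V}$, one has
\[
[B_{ij},B_{jk}]=B_{ik},
\]
which follows from a direct computation using $E_{ab}E_{cd}=\delta_{bc}E_{ad}$ and the definition $B_{ij}=E_{ij}-E_{ji}$. Given this identity, I would argue inductively along a path: if $\mathrm{G}_{\rm contr}$ is connected, then for any two distinct nodes $i,k\in\mathrm{V}$ there is a path $i=v_0v_1\cdots v_\ell=k$ in $\mathrm{G}_{\rm contr}$, and iterated brackets $[B_{v_0v_1},[B_{v_1v_2},\cdots[B_{v_{\ell-2}v_{\ell-1}},B_{v_{\ell-1}v_\ell}]\cdots]]=B_{ik}$ lie in $\{B_{i_1j_1},\dots,B_{i_mj_m}\}_{\rm LA}$. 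Since $\mathpzc{B}=\{B_{ij}:1\le i<j\le n\}$ is a basis of $\so(n)$, the generated Lie algebra must be all of $\so(n)$.

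For the necessity direction, suppose $\mathrm{G}_{\rm contr}$ is disconnected, with connected components corresponding to a partition $\mathrm{V}=\mathrm{V}_1\sqcup\cdots\sqcup\mathrm{V}_r$ of cardinalities $n_1,\dots,n_r$. I would define
\[
\mathfrak{h}:=\spa\bigl\{B_{ij}:\ i,j\in\mathrm{V}_s\text{ for some }s\in\{1,\dots,r\}\bigr\},
\]
observe that $\mathfrak{h}$ is (up to a permutation of coordinates) block-diagonally isomorphic to $\so(n_1)\oplus\cdots\oplus\so(n_r)$, and verify it is a Lie subalgebra of $\so(n)$. Each generator $B_{i_kj_k}$ lies in $\mathfrak{h}$ because $\{i_k,j_k\}$ is an edge and hence lies entirely in a single component. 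Therefore $\{B_{i_1j_1},\dots,B_{i_mj_m}\}_{\rm LA}\subseteq\mathfrak{h}\subsetneq\so(n)$, and by Theorem~\ref{basic} the system is not controllable.

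The only step that requires care is verifying that the block-diagonal candidate $\mathfrak{h}$ is closed under the Lie bracket, but this is immediate from $[B_{ij},B_{k\ell}]=0$ whenever $\{i,j\}\cap\{k,\ell\}=\emptyset$ together with the path identity $[B_{ij},B_{jk}]=B_{ik}$, both of which keep all indices inside a single component $\mathrm{V}_s$. The sufficiency computation is the conceptual heart, while the necessity argument is essentially a block-decomposition observation.
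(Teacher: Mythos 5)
Your proposal is correct and follows essentially the same route as the paper: both reduce the problem to the Lie algebra rank condition via Theorem~\ref{basic} and then exploit the bracket identity $[B_{ij},B_{jk}]=B_{ik}$ (Lemma~\ref{lem1}) to translate Lie-bracket generation into graph connectivity. The paper packages the sufficiency argument as iterated transitive closures $\mathcal{M}^{z}(\mathrm{G}_{\rm contr})$ reaching the complete graph rather than your explicit path induction, and obtains necessity from the stabilized closure rather than your explicit block-diagonal subalgebra $\so(n_1)\oplus\cdots\oplus\so(n_r)$, but these are cosmetic variants of the same argument.
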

We would like to point out that Proposition \ref{thm1} is a more explicit form of the  Theorem 1 of \cite{zhang2016} on the same problem, where controllability was studied via permutation multiplications in a symmetric group. The connection between the graph $\mathrm{G}_{\rm contr}$ and the permutation multiplications was later noted in \cite{li2017}.  Proposition \ref{thm1} is also consistent with Theorem III.9 in \cite{structural} under the notion of structural controllability.

In the presence of the drift term $A$, it becomes extremely difficult to utilize graph-theoretic tools for studying the controllability of the system (\ref{eq:model1}). The challenge is that the system (\ref{eq:model1}) can be controlled even if $\mathrm{G}_{\rm contr}$ is disconnected with the help of $\mathrm{G}_{\rm drift}$, while the relationship between $\mathrm{G}_{\rm contr}$ and $\mathrm{G}_{\rm drift}$ in terms of the generated Lie subalgebra $\{A,B_{i_1j_1},\dots,B_{i_mj_m}\}_{\rm LA}$ is quite complex.
The following theorem gives a case in which a necessary and sufficient condition can be obtained.

\begin{theorem}\label{thm2}
	Suppose each connected component of $\mathrm{G}_{\rm contr}$ contains at least three nodes. Then the system \eqref{eq:model1} is controllable on the Lie group $\SO(n)$ if and only if the union graph $\mathrm{G}_{\rm drift} \mcup \mathrm{G}_{\rm contr}:=(\mathrm{V},\mathrm{E}_{\rm drift} \mcup \mathrm{E}_{\rm contr})$ is connected.
\end{theorem}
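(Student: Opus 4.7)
The plan is to handle the two directions by different means. For the necessity direction, suppose $\mathrm{G}_{\rm drift} \mcup \mathrm{G}_{\rm contr}$ is disconnected and partition $\mathrm{V} = \mathrm{V}_1 \sqcup \mathrm{V}_2$ with both parts nonempty and no edge of either type crossing the cut. Then every generator $A, B_{i_1 j_1}, \ldots, B_{i_m j_m}$ lies in the proper Lie subalgebra $\so(|\mathrm{V}_1|) \oplus \so(|\mathrm{V}_2|) \subsetneq \so(n)$, which is already closed under brackets; hence $\{A, B_{i_1 j_1}, \ldots, B_{i_m j_m}\}_{\rm LA}$ is strictly smaller than $\so(n)$, and Theorem~\ref{basic}(ii) (using compactness of $\SO(n)$) rules out controllability.

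For sufficiency I must show $\mathfrak{L} := \{A, B_{i_1 j_1}, \ldots, B_{i_m j_m}\}_{\rm LA} = \so(n)$, i.e.\ every $B_{st} \in \mathpzc{B}$ is produced by brackets. The cornerstone computation is the identity $[B_{ij}, B_{jk}] = B_{ik}$ for distinct $i, j, k$, which I intend to exploit in two complementary ways. On one hand it propagates basis elements along paths; on the other, its iterate $(\operatorname{ad}_{B_{ij}})^2$ acts as $-\mathrm{Id}$ on the two-dimensional span $\{B_{ir}, B_{jr}\}$ for each third index $r$ and as $0$ on every other element of $\mathpzc{B}$, so it serves as a coordinate projector picking out the rows/columns indexed by $i$ and $j$.

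The argument itself then runs in three steps. First, applying Proposition~\ref{thm1} component by component to each connected component $C_\ell$ of $\mathrm{G}_{\rm contr}$, the control generators supported on $C_\ell$ already produce every $B_{st}$ with $s, t \in C_\ell$, and subtracting these within-component basis elements from $A$ yields an element $A' \in \mathfrak{L}$ whose nonvanishing coordinates $[A']_{st}$ have $s, t$ in distinct components. Second, given any drift edge $\{p, q\}$ with $p \in C_\ell, q \in C_{\ell'}, \ell \ne \ell'$, I pick $p', p'' \in C_\ell \setminus \{p\}$ distinct and $q', q'' \in C_{\ell'} \setminus \{q\}$ distinct (this is precisely where the three-node hypothesis on every component is consumed) and form
\[
\Omega := (\operatorname{ad}_{B_{qq''}})^2 (\operatorname{ad}_{B_{qq'}})^2 (\operatorname{ad}_{B_{pp''}})^2 (\operatorname{ad}_{B_{pp'}})^2 A' \in \mathfrak{L}.
\]
A case check via the projection identity should show that the inner pair of double adjoints strips $A'$ down to its $p$-row $\sum_{t \notin C_\ell} [A]_{pt} B_{pt}$, while the outer pair strips that row down to a single nonzero multiple of $B_{pq}$, placing $B_{pq}$ in $\mathfrak{L}$.

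Third, I stitch the components together combinatorially: once $B_{pq}$ is available, iterating the identity $[B_{rp}, B_{pq}] = B_{rq}$ with $r \in C_\ell$ and analogous brackets inside $C_{\ell'}$ fills in every $B_{rs}$ with $r \in C_\ell, s \in C_{\ell'}$, and walking along any drift-edge path between components, which exists by connectivity of the union graph, exhausts the basis $\mathpzc{B}$ and forces $\mathfrak{L} = \so(n)$; Theorem~\ref{basic}(ii) then delivers controllability. I expect the main obstacle to be the second step: although the projection identity is elementary, laying out the case analysis so that the three-node hypothesis feeds in at exactly the right place, and so that no within-component term sneaks back in as the fourfold projector is applied, will require careful bookkeeping and is the place where the tightness of the hypothesis is visible.
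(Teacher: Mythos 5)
Your proposal is correct, and the sufficiency half takes a genuinely different route from the paper's. The paper also begins by completing each component of $\mathrm{G}_{\rm contr}$ (via a transitive-closure lemma) and reducing $A$ to its cross-component part $\widetilde{A}$, exactly as in your Step 1; but from there it encodes the \emph{single} bracket $[\widetilde{A},B_{ij}]$ as a graph operation (the ``circumjacent closure'' $\mathcal{H}_{ij}$), proves a combinatorial lemma stating that a bi-graph whose two sides each have at least three nodes can be reduced to a single edge by a sequence of such closures, and then runs an induction on the number of components of $\mathrm{G}_{\rm contr}$, using the three-node hypothesis to locate a zero-degree node at each stage. You instead exploit the \emph{squared} adjoint: from $[B_{ij},B_{ir}]=-B_{jr}$ and $[B_{ij},B_{jr}]=B_{ir}$ one gets that $(\operatorname{ad}_{B_{ij}})^2$ is $-\mathrm{Id}$ on $\operatorname{span}\{B_{ir},B_{jr}\}$ for every $r\notin\{i,j\}$ and annihilates $B_{ij}$ and every $B_{kl}$ with $\{k,l\}\cap\{i,j\}=\emptyset$, so the composite $(\operatorname{ad}_{B_{qq''}})^2(\operatorname{ad}_{B_{qq'}})^2(\operatorname{ad}_{B_{pp''}})^2(\operatorname{ad}_{B_{pp'}})^2$ surgically extracts $[A]_{pq}B_{pq}$ from $A'$ in one shot (I checked the four stages: they yield $-(p\text{-row}+p'\text{-row})$, then the $p$-row, then $-[A]_{pq}B_{pq}-[A]_{pq'}B_{pq'}$, then $[A]_{pq}B_{pq}$, with the three-node hypothesis consumed exactly where you supply $p',p''$ and $q',q''$). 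This eliminates both the bi-graph lemma and the induction: you obtain one cross edge per adjacent pair of components directly, after which connectivity of the quotient graph and Proposition \ref{thm1}'s machinery finish the job. What the paper's formulation buys is uniformity — its closure operation and bi-graph lemma are recycled almost verbatim for the digraph setting of Theorem \ref{thm4} — whereas your projector argument is shorter and makes the role of the three-node hypothesis more transparent. Your necessity argument (block-diagonal subalgebra $\so(|\mathrm{V}_1|)\oplus\so(|\mathrm{V}_2|)$) coincides with the paper's in substance. The only looseness is in your Step 3, where ``walking along any drift-edge path'' should be phrased as: the quotient graph on components is connected, Step 2 supplies a basis element for each cross drift edge, and the resulting edge set corresponds to a connected graph, so Lemma \ref{lem:graph.expansion} applies; this is routine and does not affect correctness.
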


From Theorem \ref{thm2}, controllability verification for the system \eqref{eq:model1} from the connectivity of the graphs  $\mathrm{G}_{\rm contr}$ and $\mathrm{G}_{\rm drift}$ is still possible in the presence of drift, under an additional assumption on the minimal size of the connected components of $\mathrm{G}_{\rm contr}$.

The technical extension from  Proposition \ref{thm1} to Theorem \ref{thm2} is nontrivial, where we have to introduce a new type of graph closure operations, compared to the transitive closure operations used in the literature. The proofs of  Proposition \ref{thm1} and Theorem \ref{thm2} can be found in the appendix.

\begin{remark} 
	When the minimal three-node condition for $\mathrm{G}_{\rm contr}$'s connected components fails to hold, we can construct examples, e.g., the upcoming Example 2, where connectivity of $\mathrm{G}_{\rm drift} \mcup \mathrm{G}_{\rm contr}$ dose not guarantee controllability of the system \eqref{eq:model1}. 	It is also worth pointing out that as long as the union graph $\mathrm{G}_{\rm drift} \mcup \mathrm{G}_{\rm contr}$ is not connected, the system \eqref{eq:model1} is always uncontrollable.
\end{remark}

\subsection{Examples}
\noindent{\bf Example 1.}
Consider the system \eqref{eq:model1} evolving on $\SO(6)$. Let $A=B_{12}+2B_{13}-3B_{14}$. Let $m=4$ and $B_{i_1j_1}= B_{13}$, $B_{i_2j_2}= B_{24}$, $B_{i_3j_3}= B_{35}$, $B_{i_4j_4}= B_{46}$. The controlled interaction graph, the drift interaction graph, and their union graph, are shown, respectively,  in Figure \ref{fig1}.
\begin{figure}[H]
	\centering
	\parbox{4cm}{
		\includegraphics[width=4cm]{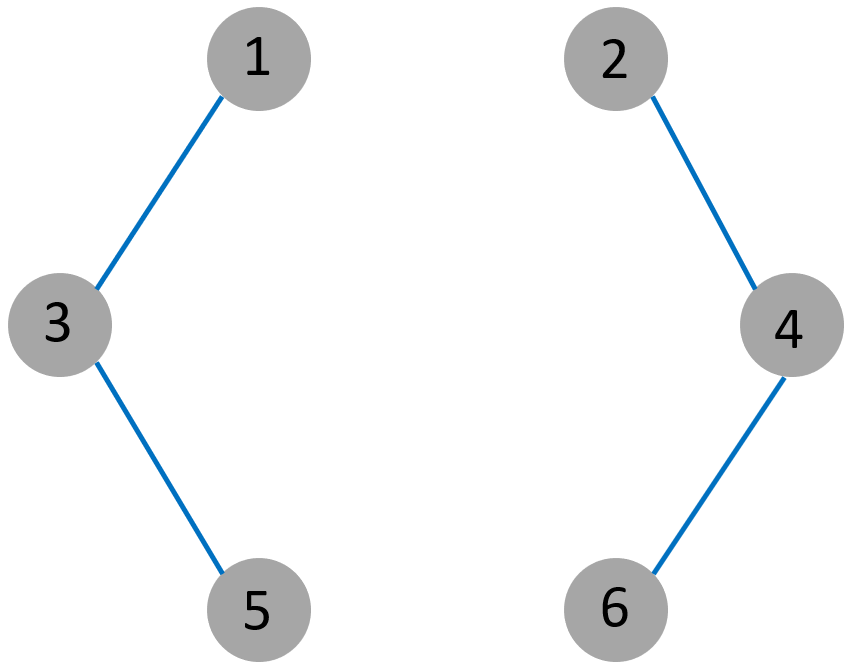}
		\subcaption{The graph $\mathrm{G}_{\rm contr}$.}}
	\qquad
	\begin{minipage}{4cm}
		\includegraphics[width=4cm]{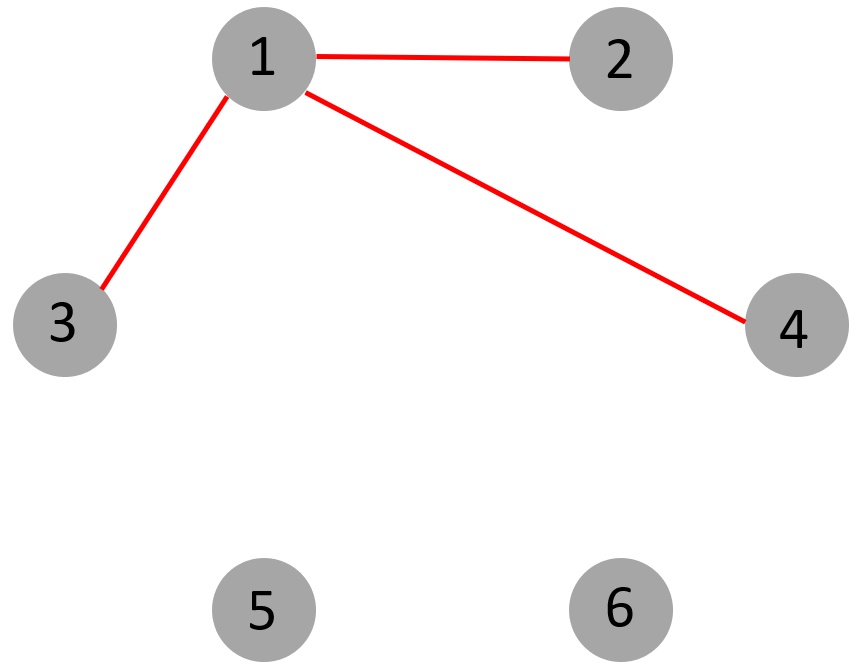}
		\subcaption{The graph $\mathrm{G}_{\rm drift}$. }
	\end{minipage}
	\qquad
	\begin{minipage}{4cm}
		\includegraphics[width=4cm]{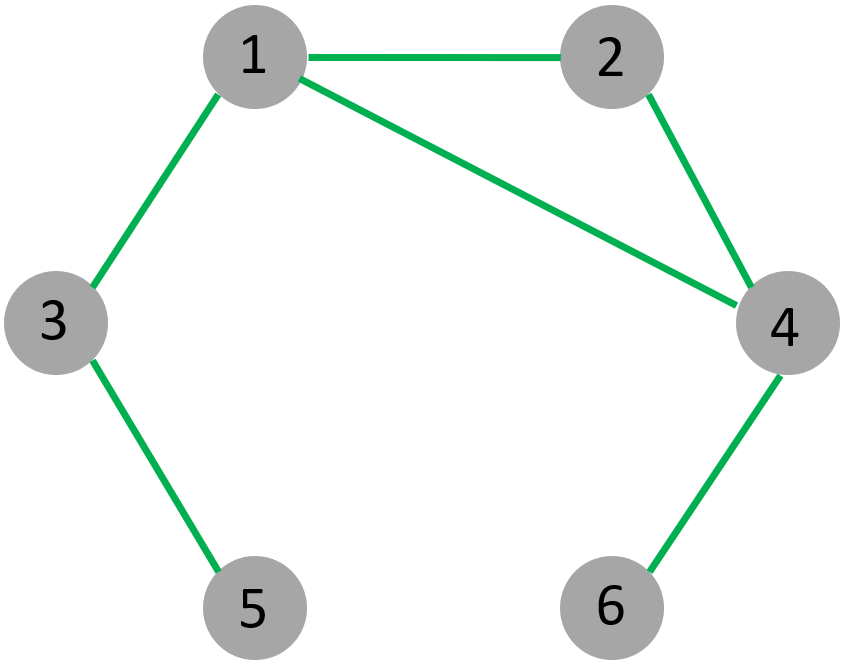}
		\subcaption{The union graph.}
	\end{minipage}
	\caption{The controlled, drift, and union interaction graphs for Example 1. }\label{fig1}
\end{figure}
It is clear that each connected component of $\mathrm{G}_{\rm contr}$ contains at least three nodes,  and the union graph $\mathrm{G}_{\rm drift} \mcup \mathrm{G}_{\rm contr}$ is connected. As a result, the graphical condition of Theorem \ref{thm2} has been met.

Since the Lie algebra is a vector space, $A-2B_{13}=B_{12}-3B_{14}\in \{A,B_{13},B_{24},B_{35},B_{46}\}_{\rm LA}$.
By direct computation one can verify $[B_{12}-3B_{14},B_{46}]=-3B_{16}$ and
$$\{A,B_{13},B_{24},B_{35},B_{46},B_{16}\}_{\rm LA}=\so(6),$$
therefore the system \eqref{eq:model1} is indeed controllable. This provides a validation of Theorem \ref{thm2}.

\noindent{\bf Example 2.}
Consider the system \eqref{eq:model1} evolving on $\SO(6)$. Let $A=B_{12}+B_{23}+B_{24}+B_{56}$. Let $m=3$ and $B_{i_1j_1}= B_{13}$, $B_{i_2j_2}= B_{24}$, and $B_{i_3j_3}= B_{46}$. The controlled interaction graph, the drift interaction graph, and their union graph, are shown, respectively,  in Figure  \ref{fig2}.
\begin{figure}[H]
	\centering
	\parbox{4cm}{
		\includegraphics[width=4cm]{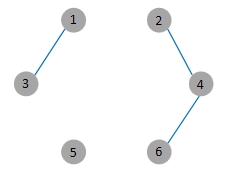}
		\subcaption{The graph $\mathrm{G}_{\rm contr}$.}}
	\qquad
	\begin{minipage}{4cm}
		\includegraphics[width=4cm]{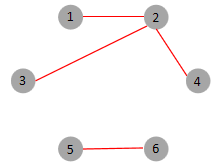}
		\subcaption{The graph $\mathrm{G}_{\rm drift}$. }
	\end{minipage}
	\qquad
	\begin{minipage}{4cm}
		\includegraphics[width=4cm]{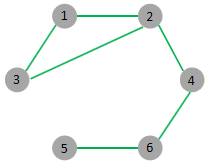}
		\subcaption{The union graph.}
	\end{minipage}
	\caption{The controlled, drift, and union interaction graphs for Example 2. }\label{fig2}
\end{figure}

It is clear that $\mathrm{G}_{\rm contr}$ has three connected components,  two of which contain less than three nodes. The union interaction graph $\mathrm{G}_{\rm drift} \mcup \mathrm{G}_{\rm contr}$ continues to be connected. By direct computation we can verify  that $\{A,B_{13},B_{24},B_{46}\}_{\rm LA}$ is a Lie subalgebra of dimension 11. Therefore, the system \eqref{eq:model1} is not controllable since $\so(6)$ is a 15-dimensional Lie algebra.

As a result, this example shows that the connectivity of the union graph $\mathrm{G}_{\rm drift} \mcup \mathrm{G}_{\rm contr}$ cannot guarantee controllability by itself. The component size condition on  $\mathrm{G}_{\rm contr}$ for Theorem \ref{thm2} is in fact rather tight.

\section{Controllability and Accessibility over ${\rm SL}(n)$}\label{sec:special}
The special linear group,  ${\rm SL}(n)$, is the group formed by all $\mathbb{R}^{n\times n}$ matrices with unit determinant. The Lie group ${\rm SL}(n)$ is connected, and its corresponding Lie algebra is the space of $n\times n$ real traceless matrices $\mathfrak{sl}(n)$, which has the dimension $n^{2}-1$. Recall that $E_{ij}\in\mathbb{R}^{n\times n}$ is the matrix with $(i,j)$-th entry being $1$ and others being $0$. Let $\mathpzc{E}_{1}=\{E_{ij}:1\le i\neq j\le n\}$, $\mathpzc{E}_{2}=\{E_{ii}:1\le i\le n\}$,
and let $\mathpzc{E}_{3}=\{C_{ij}:1\le i\neq j\le n\}$, where $C_{ij}=E_{ii}-E_{jj}$.
Then $\mathpzc{E}_{1}\mcup\mathpzc{E}_{3}$ contains a basis of $\mathfrak{sl}(n)$.

We consider the following bilinear system in the form of (\ref{bilinear}) that evolves over ${\rm SL}(n)$:
\begin{equation}\label{eq:model2}
\dot{X}(t)=AX(t)+\Big(\sum_{k=1}^{m_1} u_k(t)E_{i_kj_k}+\sum_{k=m_1+1}^{m_2}C_{i_{k}j_{k}}\Big)X(t),\ \ X(0)=I_{n},
\end{equation}
where $X(t)\in {\rm SL}(n)$, $A\in\mathfrak{sl}(n)$ is a drift dynamical term, $E_{i_kj_k}\in\mathpzc{E}_{1}$ for $k=1,\dots,m_1$, and $C_{i_kj_k}\in\mathpzc{E}_{3}$ for $k=m_1+1,\dots,m_2$ are controlled dynamical terms, $u_k(t)\in\R$ are piecewise constant functions for $k=1,\dots,m_2$.

We similarly introduce the drift and controlled interaction graphs of the system (\ref{eq:model2}), which now have to be digraphs.

\begin{definition}\label{def3}
	$(i)$ The drift interaction  graph associated with the bilinear  system {\rm(\ref{eq:model2})}, denoted by $\mathcal{G}_{\rm drift}$, is  defined as the digraph $\mathcal{G}_{\rm drift}=(\mathrm{V},\mathcal{E}_{\rm drift})$, where  $(i,j)\in \mathcal{E}_{\rm drift}$ if and only if $[A]_{ij}\neq 0$ for all $i,j\in\mathrm{V}, i\neq j$. \\
	$(ii)$ The controlled interaction graph associated with the bilinear  system {\rm(\ref{eq:model2})}, denoted by $\mathcal{G}_{\rm contr}$, is  defined as the digraph $\mathcal{G}_{\rm contr}=(\mathrm{V},\mathcal{E}_{\rm contr})$ with $\mathcal{E}_{\rm contr}=\{(i_1,j_1),\dots,(i_{m_1},j_{m_1})\}$.
\end{definition}

\begin{remark}
	Note that in system {\rm(\ref{eq:model2})} the diagonal elements of $A$, and the $C_{i_{k}j_{k}}$ terms that belong to $\mathpzc{E}_{3}$, have not been taken into consideration in $\mathcal{G}_{\rm drift}$ and $\mathcal{G}_{\rm contr}$. As it will become clear later, these terms do not contribute to the controllability and accessibility of system {\rm(\ref{eq:model2})}.
\end{remark}

\subsection{Main Results}
We present the following results on the accessibility and controllability of system \eqref{eq:model2}.
First of all, similar to the conclusion of system \eqref{eq:model1}, when the system \eqref{eq:model2} is driftless, the controllability of the system \eqref{eq:model2} is entirely determined by the strong connectivity of $\mathcal{G}_{\rm contr}$.

\begin{proposition}\label{thm3}
	Suppose $A=0$. Then the system \eqref{eq:model2} is controllable on the Lie group ${\rm SL}(n)$ if and only if the digraph  $\mathcal{G}_{\rm contr}$ is strongly connected.
\end{proposition}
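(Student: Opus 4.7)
My plan is to reduce the claim to the Lie algebra rank condition via Theorem \ref{basic} and then recognize strong connectivity of $\mathcal{G}_{\rm contr}$ as the combinatorial criterion for the generated algebra to exhaust $\mathfrak{sl}(n)$. Because $A=0$, part (ii) of Theorem \ref{basic} makes controllability equivalent to accessibility, so the task reduces to proving
\begin{equation*}
\{E_{i_1j_1},\dots,E_{i_{m_1}j_{m_1}},C_{i_{m_1+1}j_{m_1+1}},\dots,C_{i_{m_2}j_{m_2}}\}_{\rm LA}=\mathfrak{sl}(n)
\end{equation*}
iff $\mathcal{G}_{\rm contr}$ is strongly connected. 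The pivotal algebraic fact I would use in both directions is the matrix-unit identity $[E_{ab},E_{cd}]=\delta_{bc}E_{ad}-\delta_{da}E_{cb}$, which turns bracketing of two $E$-type generators into the concatenation of the corresponding arcs in $\mathcal{G}_{\rm contr}$ (or a Cartan element $C_{ab}$ when the arcs form a $2$-cycle). Meanwhile $[C,E_{ab}]$ is always a scalar multiple of $E_{ab}$, and the Cartan subalgebra $\mathfrak{h}$ of diagonal traceless matrices is abelian, so Cartan-type generators never create new arcs.

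For sufficiency, assuming $\mathcal{G}_{\rm contr}$ is strongly connected, for each ordered pair $(i,j)$ with $i\neq j$ I would pick a simple directed path $i=v_0\to v_1\to\cdots\to v_k=j$ in $\mathcal{G}_{\rm contr}$ and iterate the path-composition bracket along it; simplicity of the path guarantees the index-distinctness conditions of the bracket identity at every step, so $\pm E_{ij}$ is produced in the generated Lie algebra. Then $[E_{ij},E_{ji}]=C_{ij}$ is also obtained, and the family $\{E_{ij}:i\neq j\}\cup\{C_{ij}:i\neq j\}$ plainly spans $\mathfrak{sl}(n)$. For necessity I would introduce the reachability set
\begin{equation*}
P:=\{(i,j)\in\mathrm{V}\times\mathrm{V}:\ i\neq j\ \text{and a directed path }i\to j\text{ exists in }\mathcal{G}_{\rm contr}\}
\end{equation*}
and the candidate subspace $\mathcal{S}:=\spa\{E_{ij}:(i,j)\in P\}+\mathfrak{h}$. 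All generators lie in $\mathcal{S}$, since $(i_k,j_k)\in\mathcal{E}_{\rm contr}\subseteq P$ for $k\leq m_1$ and $C_{i_kj_k}\in\mathfrak{h}$ for $k>m_1$. When $\mathcal{G}_{\rm contr}$ is not strongly connected, $P$ is a proper subset of the off-diagonal pairs, so $\mathcal{S}\subsetneq\mathfrak{sl}(n)$, and it then suffices to verify that $\mathcal{S}$ is itself a Lie subalgebra.

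The main obstacle, and really the heart of the argument, is this closure check. The only nontrivial case is $[E_{ab},E_{cd}]$ with $(a,b),(c,d)\in P$: when $b=c$ the bracket produces $E_{ad}$ (or $C_{ab}\in\mathfrak{h}$ if $a=d$), and the transitivity of reachability --- concatenating a path $a\to b$ with a path $b\to d$ yields a path $a\to d$ --- exactly supplies $(a,d)\in P$; the symmetric case $d=a$ is handled identically, and all remaining bracket types are trivially closed. Once closure is established, the generated Lie algebra sits inside the proper subalgebra $\mathcal{S}$, the Lie algebra rank condition fails, and the necessity direction follows. Beyond this transitivity-meets-matrix-unit-identity observation, every remaining step should be routine.
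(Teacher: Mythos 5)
Your proposal is correct and follows essentially the same route as the paper: both directions rest on the matrix-unit bracket identity $[E_{ab},E_{cd}]=\delta_{bc}E_{ad}-\delta_{da}E_{cb}$, with sufficiency obtained by concatenating brackets along directed paths (which the paper packages as the iterated transitive-closure operator $\mathcal{M}$ together with Lemma~\ref{lem7} and Lemma~\ref{lem9}) and necessity obtained by confining the generated algebra to the span of the reachable matrix units plus the diagonal traceless part (the paper's Lemma~\ref{lem8} followed by a dimension count). Your explicit check that the subspace $\mathcal{S}$ is closed under brackets is precisely the content the paper leaves implicit in Lemma~\ref{lem8}, so the argument is sound and complete.
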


In the presence of the drift term $A$, it is difficult to verify the controllability of the system \eqref{eq:model2}.
This is because Lie group ${\rm SL}(n)$ is no longer compact, and accessibility is only a necessary condition for controllability. In addition, it is hard to utilize graph-theoretic tools for the study of accessibility.
However, the following theorem gives a particular situation where a necessary and sufficient graphical condition can be used to verify accessibility.

\begin{theorem}\label{thm4}
	Suppose the weakly connected components of $\mathcal{G}_{\rm contr}$ satisfy:
	
	$(i)$ They are all strongly connected with at least two nodes;
	
	$(ii)$ One of them contains at least three nodes.
	
	Then the system \eqref{eq:model2} is accessible on ${\rm SL}(n)$ if and only if the union digraph $\mathcal{G}_{\rm drift} \mcup \mathcal{G}_{\rm contr}:=(\mathrm{V},\mathcal{E}_{\rm drift} \mcup \mathcal{E}_{\rm contr})$ is strongly connected.
\end{theorem}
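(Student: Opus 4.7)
The plan is to invoke Theorem~\ref{basic}(i) and reduce the accessibility question to showing that the generated Lie subalgebra
$$\mathfrak{L} := \{A,\, E_{i_1j_1},\dots,E_{i_{m_1}j_{m_1}},\, C_{i_{m_1+1}j_{m_1+1}},\dots,C_{i_{m_2}j_{m_2}}\}_{\rm LA}$$
coincides with $\mathfrak{sl}(n)$ if and only if $\mathcal{G}_{\rm drift}\mcup\mathcal{G}_{\rm contr}$ is strongly connected.

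For the necessity direction, suppose $\mathcal{G}_{\rm drift}\mcup\mathcal{G}_{\rm contr}$ is not strongly connected. Then there exists a nonempty proper $V_1\subsetneq\mathrm{V}$ with no arc from $V_1$ to $\mathrm{V}\setminus V_1$ in $\mathcal{E}_{\rm drift}\mcup\mathcal{E}_{\rm contr}$. I would verify that the subspace $\mathfrak{p}\subset\mathfrak{sl}(n)$ consisting of matrices $M$ with $M_{ij}=0$ for every $i\in V_1$, $j\in\mathrm{V}\setminus V_1$ is a proper Lie subalgebra (a standard parabolic-type subalgebra) that contains every generator of $\mathfrak{L}$; closure under brackets then forces $\mathfrak{L}\subseteq\mathfrak{p}\subsetneq\mathfrak{sl}(n)$, so accessibility fails by Theorem~\ref{basic}(i).

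For the sufficiency direction, I would carry out three stages. In stage one, since each weakly connected component $\mathcal{C}_\alpha$ of $\mathcal{G}_{\rm contr}$ is strongly connected by hypothesis~(i), Proposition~\ref{thm3} applied componentwise gives $\mathfrak{sl}(\mathcal{C}_\alpha)\subset\mathfrak{L}$ for every $\alpha$; in particular $\mathfrak{L}$ contains the block-diagonal Cartan $\tilde{\mathfrak{h}}:=\bigoplus_\alpha\mathfrak{h}(\mathcal{C}_\alpha)$ together with all intra-component $E_{ij}$'s. In stage two, I would apply iterated operators $\mathrm{ad}(h)$ with $h\in\tilde{\mathfrak{h}}$ to the drift $A$: because $\mathrm{ad}(h)(E_{kl})=(h_k-h_l)E_{kl}$ acts diagonally on the basis $\{E_{kl}\}$, one can perform a Lagrange-interpolation-style extraction to isolate individual bridge drift arcs $E_{kl}$ from $A$, where the extra freedom afforded by hypothesis~(ii) is crucial (a three-node component $\mathcal{C}^\ast$ supplies a two-dimensional piece of $\tilde{\mathfrak{h}}$ on $\mathcal{C}^\ast$, large enough to assign distinct Cartan weights to every drift arc incident to $\mathcal{C}^\ast$). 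In stage three, once bridging generators $E_{ij}$ with $i\in\mathcal{C}^\ast$, $j\in\mathcal{C}_\beta$ lie in $\mathfrak{L}$, the identities $[E_{pi},E_{ij}]=E_{pj}$ and $[E_{ij},E_{jq}]=E_{iq}$ propagate them into every pair of components via $\mathcal{C}^\ast$ as a hub; strong connectivity of $\mathcal{G}_{\rm drift}\mcup\mathcal{G}_{\rm contr}$ guarantees that iterating this along directed chains reaches every $(p,q)$ with $p\neq q$. Finally $[E_{pq},E_{qp}]=C_{pq}$ supplies the remaining Cartan directions, giving $\mathfrak{L}=\mathfrak{sl}(n)$.

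The main obstacle I anticipate is stage two. The issue is that for two weakly connected components of size exactly two, pairs of distinct bridge arcs can carry the same Cartan weight---for instance the arcs $(1,3)$ and $(4,2)$ have equal weight $h_1-h_3$ under every $h\in\tilde{\mathfrak{h}}$ when $\mathcal{C}_\alpha=\{1,2\}$ and $\mathcal{C}_\beta=\{3,4\}$---so the naive ad-interpolation is unable to separate them, and in fact one can produce small examples in which the resulting $\mathfrak{L}$ is a proper subalgebra of $\mathfrak{sl}(n)$. Hypothesis~(ii) is designed precisely to break this degeneracy by supplying additional Cartan dimension on $\mathcal{C}^\ast$; I expect the careful combinatorial extraction organized around $\mathcal{C}^\ast$ and then propagated along the union digraph is the core of the ``new type of graph closure operation'' alluded to in the paper, after which the concluding bootstrap in stage three is routine via the standard $\mathfrak{sl}$ bracket identities.
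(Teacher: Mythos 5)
Your necessity argument is correct and in fact cleaner than the paper's: the parabolic-type subalgebra $\mathfrak{p}$ attached to a set $V_1$ with no outgoing arcs is indeed closed under brackets and contains all generators, whereas the paper only asserts (via its Lemma on transitive closures) that the missing $E_{kl}$ can never be generated. Your stage one matches the paper's use of the transitive closure to obtain all intra-component generators, and your stage two is a genuinely different mechanism from the paper's: you separate bridge arcs by the eigenvalues of $\mathrm{ad}(h)$ for $h$ in the block-diagonal Cartan, whereas the paper brackets the reduced drift $\widetilde{A}$ against \emph{off-diagonal} intra-component generators and tracks the effect through the ``circumjacent closure'' $\mathcal{H}_{ij}$ of Definition~\ref{def8} and Lemma~\ref{lem10}. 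Your weight-separation claim for arcs incident to the three-node component $\mathcal{C}^\ast$ is correct (if an endpoint lies in $\mathcal{C}^\ast$, the functional $h_i-h_j$ on $\tilde{\mathfrak{h}}$ cannot coincide with that of any other arc), and your diagnosis of why two-node components cause degeneracy is exactly the phenomenon behind Example~4.

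The genuine gap is stage three, which you call routine but which is in fact the combinatorial core of the theorem. Two problems. First, $\mathcal{C}^\ast$ need not have direct drift arcs to and from every other component, so the hub compositions $\mathcal{C}_\beta\to\mathcal{C}^\ast\to\mathcal{C}_\gamma$ are not immediately available after stage two; in a cyclic arrangement $\mathcal{C}^\ast\to\mathcal{C}_\beta\to\mathcal{C}_\gamma\to\mathcal{C}^\ast$ you obtain only $\mathcal{C}^\ast\to\mathcal{C}_\beta$ and $\mathcal{C}_\gamma\to\mathcal{C}^\ast$ from stage two, and reaching $\mathcal{C}^\ast\to\mathcal{C}_\gamma$ requires first extracting the $\mathcal{C}_\beta\to\mathcal{C}_\gamma$ bridge arc. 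Second, that arc joins two two-node components neither of which is $\mathcal{C}^\ast$, so by your own observation it may share its Cartan weight with a reverse arc (e.g.\ $(4,6)$ and $(7,5)$ with $\mathcal{C}_\beta=\{4,5\}$, $\mathcal{C}_\gamma=\{6,7\}$) and cannot be isolated by $\mathrm{ad}(h)$-interpolation. Recovering it forces an iterative scheme --- obtain one direction by composition through already-known pairs, subtract it from the residual drift, re-extract, and repeat, possibly bracketing the still-entangled combinations such as $aE_{46}+bE_{75}$ against known cross-component generators --- and proving that this process always terminates with all of $\mathfrak{sl}(n)$, for an arbitrary strongly connected union digraph, is precisely what is missing. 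This is what the paper's Steps~2 and~3 of Appendix~D.1 supply, by ordering the components along a directed cycle and repeatedly applying the circumjacent closure (Lemma~\ref{lem10}) to reduce the valid-arc graph to a single isolated bridge arc at each stage; without an analogous induction your argument does not close.
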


Obviously, the  necessity statement of Theorem \ref{thm4} is unconditional, i.e., $\mathcal{G}_{\rm drift} \mcup \mathcal{G}_{\rm contr}$ being strongly connected is always necessary for accessibility. It can also be noticed that Theorem \ref{thm4} has a looser requirement for the minimal size of the weakly connected components of $\mathcal{G}_{\rm contr}$ than Theorem \ref{thm2}, benefiting from the properties of directed graphs. Again, this minimal size condition plays an important role in ensuring that the connectivity of the union digraph  $\mathcal{G}_{\rm drift} \mcup \mathcal{G}_{\rm contr}$ leads to the accessibility of the system \eqref{eq:model2}, which will be illustrated in  Example 4.

The proofs of Proposition \ref{thm3} and Theorem \ref{thm4} extend the analysis for Proposition \ref{thm1} and Theorem \ref{thm2} to digraphs and can be found in the appendix.

\subsection{Examples}

\noindent{\bf Example 3.} Consider the system \eqref{eq:model2} evolving on ${\rm SL}(5)$. Let $A=E_{12}+2E_{15}+E_{32}-3E_{54}+2C_{35}$. Let $m_1=5, m_2=6$, and let $E_{i_1j_1}= E_{12}$, $E_{i_2j_2}= E_{21}$, $E_{i_3j_3}= E_{54}$, $E_{i_4j_4}= E_{43}$, $E_{i_5j_5}= E_{35}$, $C_{i_6j_6}= C_{45}$. The controlled interaction digraph, the drift interaction digraph, and their union digraph are shown respectively in Figure \ref{fig3}.
\begin{figure}[H]
	\centering
	\parbox{4cm}{
		\includegraphics[width=4cm]{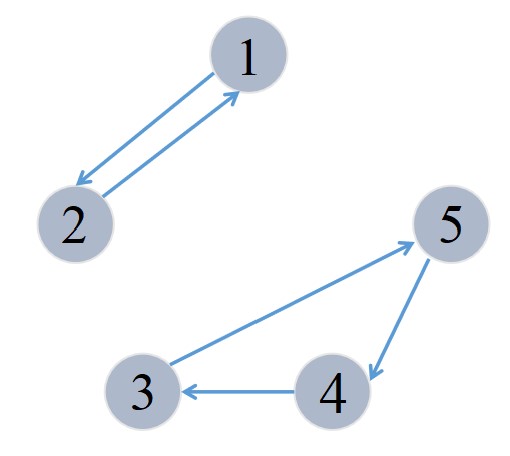}
		\subcaption{The digraph $\mathcal{G}_{\rm contr}$.}}
	\qquad
	\begin{minipage}{4cm}
		\includegraphics[width=4cm]{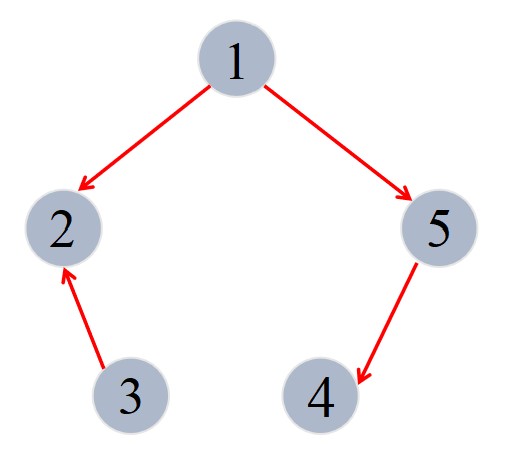}
		\subcaption{The digraph $\mathcal{G}_{\rm drift}$. }
	\end{minipage}
	\qquad
	\begin{minipage}{4cm}
		\includegraphics[width=4cm]{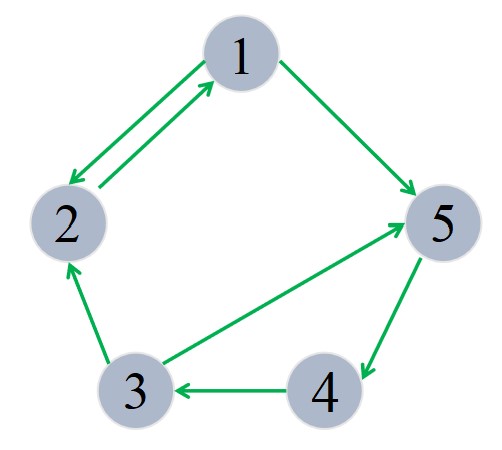}
		\subcaption{The union digraph.}
	\end{minipage}
	\caption{The controlled, drift, and union interaction digraphs for Example 3. }\label{fig3}
\end{figure}
It is clear that each weakly connected component of $\mathcal{G}_{\rm contr}$ is strongly connected with at least two nodes, and one of them contains three nodes. The union digraph $\mathcal{G}_{\rm drift} \mcup \mathcal{G}_{\rm contr}$ is strongly connected. As a result, the graphical condition of Theorem \ref{thm4} has been met.  By applying the Lie bracket repeatedly, we obtain
	$$A+2[[E_{54},E_{43}],E_{35}]-E_{12}+3E_{54}=2E_{15}+E_{32}, $$
$$	[2E_{15}+E_{32},E_{54}]=2E_{14},\ [E_{43},2E_{15}+E_{32}]=E_{42}.$$

By direct computation one can verify
$$
\big\{\{E_{12},E_{21},E_{54},E_{43},E_{35}\}\mcup\{E_{14},E_{42}\}\big\}_{\rm LA}=\mathfrak{sl}(5).
$$
Therefore, $\{A,E_{12},E_{21},E_{54},E_{43},E_{35},C_{45}\}_{\rm LA}=\mathfrak{sl}(5),$ and the system \eqref{eq:model2} is indeed accessible by Theorem \ref {basic}. This provides a validation of Theorem \ref{thm4}.

\noindent{\bf Example 4.}
Consider the system \eqref{eq:model2} evolving on ${\rm SL}(4)$. Let $A=E_{23}+E_{41}$. Let $m_1=m_2=4$  and $E_{i_1j_1}= E_{12}$, $E_{i_2j_2}= E_{21}$, $E_{i_3j_3}= E_{34}$, $E_{i_4j_4}= E_{43}$. The controlled interaction digraph, the drift interaction digraph, and their union digraph, are shown respectively in Figure \ref{fig4}.
\begin{figure}[H]
	\centering
	\parbox{4cm}{
		\includegraphics[width=4cm]{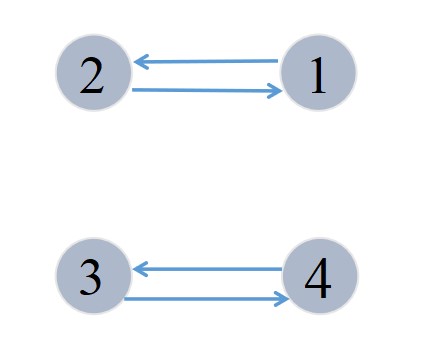}
		\subcaption{The digraph $\mathcal{G}_{\rm contr}$.}}
	\qquad
	\begin{minipage}{4cm}
		\includegraphics[width=4cm]{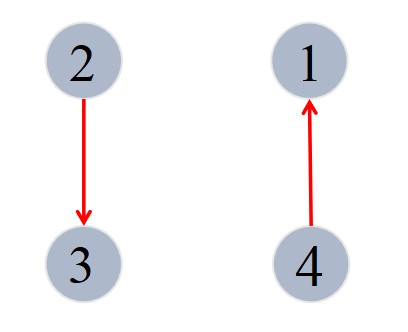}
		\subcaption{ The digraph $\mathcal{G}_{\rm drift}$.}
	\end{minipage}
	\qquad
	\begin{minipage}{4cm}
		\includegraphics[width=4cm]{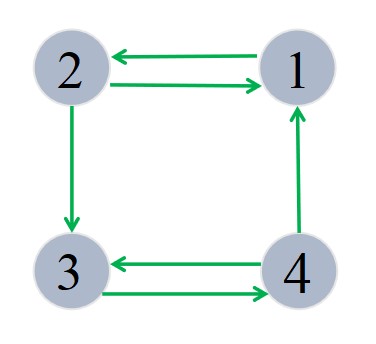}
		\subcaption{The union digraph. }
	\end{minipage}
	\caption{The controlled, drift, and union interaction digraphs for Example 4. }\label{fig4}
\end{figure}
It is clear that each weakly connected component of $\mathcal{G}_{\rm contr}$ is strongly connected with only two nodes. The union interaction digraph $\mathcal{G}_{\rm drift} \mcup \mathcal{G}_{\rm contr}$ continues to be strongly connected.  By direct computation one can verify
$$\{A,E_{12},E_{21},E_{34},E_{43}\}_{\rm LA}\neq\mathfrak{sl}(4),$$
since $\{A,E_{12},E_{21},E_{34},E_{43}\}_{\rm LA}$ is a Lie subalgebra of dimension 10, while $\mathfrak{sl}(4)$ is 15-dimensional.
Therefore, the system \eqref{eq:model2} is not accessible. This example shows that the strong connectivity of the union graph $\mathcal{G}_{\rm drift} \mcup \mathcal{G}_{\rm contr}$ cannot guarantee accessibility by itself. The weakly connected component size condition on  $\mathcal{G}_{\rm contr}$ for Theorem \ref{thm4} is in fact rather tight.

\section{Controllability and Accessibility over ${\rm GL^{+}}(n)$}\label{sec:general}
The general linear group, ${\rm GL}(n)$, is the Lie group formed by all $\mathbb{R}^{n\times n}$ invertible matrices. It has two components separated by the set of singular matrices. The connected component of ${\rm GL}(n)$ containing $I_{n}$ is a Lie subgroup of ${\rm GL}(n)$, denoted by ${\rm GL^{+}}(n)$. It is also a noncompact group. The Lie algebra associated with Lie group ${\rm GL^{+}}(n)$ is equal to $\mathfrak{gl}(n)$, the set of all $n\times n$ real matrices. Recall that $\mathpzc{E}_{1}=\{E_{ij}:1\le i\neq j\le n\}$ and $\mathpzc{E}_{2}=\{E_{ii}:1\le i\le n\}$.
Then the set $\mathpzc{E}_{1}\mcup\mathpzc{E}_{2}$ forms a standard basis of $\mathfrak{gl}(n)$, which has the dimension $n^{2}$.

We consider the following bilinear system in the form of (\ref{bilinear}) that evolves over ${\rm GL^{+}}(n)$:
\begin{align}\label{eq:model3}
	\dot{X}(t)=AX(t)+\Big(\sum_{k=1}^m u_k(t)E_{i_kj_k}\Big)X(t),\ \ X(0)=I_{n},
\end{align}
where $X(t)\in {\rm GL^{+}}(n)$, $A\in\mathfrak{gl}(n)$ is a drift dynamical term, $E_{i_kj_k}\in\mathpzc{E}_{1}\mcup\mathpzc{E}_{2}$ for $k=1,\dots,m$ are controlled dynamical terms, and $u_k(t)\in\R$ is the control input as a piecewise constant signal at the $E_{i_kj_k}$ for $k=1,\dots,m$.  Similarly, we introduce the drift and controlled interaction graphs of the system (\ref{eq:model3}).

\begin{definition}\label{def4}
	$(i)$ The drift interaction  graph associated with the bilinear  system {\rm(\ref{eq:model3})}, denoted by $\mathcal{G}_{\rm drift}^{\ast}$, is  defined as the digraph $\mathcal{G}_{\rm drift}^{\ast}=(\mathrm{V},\mathcal{E}_{\rm drift}^{\ast})$, where $(i,j)\in \mathcal{E}_{\rm drift}^{\ast}$ if and only if $[A]_{ij}\neq 0$, for $i, j\in\mathrm{V}$.\\
	$(ii)$ The controlled interaction graph associated with the bilinear  system {\rm(\ref{eq:model3})}, denoted by $\mathcal{G}_{\rm contr}^{\ast}$, is  defined as the digraph $\mathcal{G}_{\rm contr}^{\ast}=(\mathrm{V},\mathcal{E}_{\rm contr}^{\ast})$ with $\mathcal{E}_{\rm contr}^{\ast}=\big\{(i_1,j_1),\dots,(i_m,j_m)\big\}$.
\end{definition}
Note that now both $\mathcal{G}_{\rm drift}^{\ast}$ and $\mathcal{G}_{\rm contr}^{\ast}$ may have self-loops in the form of $(i,i)$, for $i\in\mathrm{V}$.

\subsection{Main Results}
We present the following results on the accessibility and controllability  of system \eqref{eq:model3}.
First of all, as opposed to the conclusion for system \eqref{eq:model2}, when the system \eqref{eq:model3} is driftless, the controllability of the system \eqref{eq:model3} is determined by the strong connectivity of $\mathcal{G}_{\rm contr}^{\ast}$ and whether there is a self-loop.

\begin{proposition}\label{thm5}
	Suppose $A=0$. Then the system \eqref{eq:model3} is controllable on the Lie group ${\rm GL^{+}}(n)$ if and only if the digraph $\mathcal{G}_{\rm contr}^{\ast}$ is a strongly connected digraph with at least one self-loop.
\end{proposition}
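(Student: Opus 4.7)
The plan is to invoke Theorem \ref{basic}(ii), since $A=0$ gives $\mathsf{B}_0=0$, which collapses controllability to the equality $\{E_{i_1j_1},\dots,E_{i_mj_m}\}_{\rm LA}=\mathfrak{gl}(n)$. I will organize the whole argument around the bracket identity
\[
[E_{ab},E_{cd}]=\delta_{bc}\,E_{ad}-\delta_{da}\,E_{cb},
\]
whose two useful specialisations are $[E_{ik},E_{kj}]=E_{ij}$ for $i\neq j$ and $[E_{ij},E_{ji}]=E_{ii}-E_{jj}$ for $i\neq j$. Let $\mathfrak{L}$ denote the generated Lie algebra throughout.

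For necessity, I would exhibit two independent obstructions. First, if $\mathcal{G}_{\rm contr}^{\ast}$ has no self-loop, every generator is traceless; since brackets of traceless matrices are traceless, $\mathfrak{L}\subseteq\mathfrak{sl}(n)$, a codimension-one proper subalgebra of $\mathfrak{gl}(n)$. Second, if $\mathcal{G}_{\rm contr}^{\ast}$, viewed without self-loops, fails to be strongly connected, I would pass to its condensation, pick a terminal strongly connected component, and relabel $\mathrm{V}$ so that this component occupies the last rows and columns. Every generator $E_{i_kj_k}$ is then block upper triangular relative to this partition, and the block upper triangular matrices form a Lie subalgebra of $\mathfrak{gl}(n)$, so $\mathfrak{L}$ is strictly contained in $\mathfrak{gl}(n)$.

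For sufficiency, I would build $\mathfrak{L}$ up in three stages. First, strong connectivity yields, for every $i\neq j$, a path of distinct nodes $i=v_1,v_2,\dots,v_k=j$ in $\mathcal{G}_{\rm contr}^{\ast}$; iterating $[E_{v_1v_l},E_{v_lv_{l+1}}]=E_{v_1v_{l+1}}$ places $E_{ij}$ in $\mathfrak{L}$. Second, $[E_{ij},E_{ji}]=E_{ii}-E_{jj}$ fills in every traceless diagonal element, giving $\mathfrak{sl}(n)\subseteq\mathfrak{L}$. Third, the hypothesised self-loop supplies some $E_{kk}\in\mathfrak{L}$, and adding each $E_{ii}-E_{kk}$ produces $E_{ii}$ for every $i$, yielding $\mathfrak{L}=\mathfrak{gl}(n)$.

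The main obstacle is the strong-connectivity side of necessity: unlike the trace argument, it requires one to name an explicit proper Lie subalgebra containing every generator. The block upper triangular construction from the condensation of $\mathcal{G}_{\rm contr}^{\ast}$ is my tool of choice, but one must verify that any self-loop generators $E_{ii}$ still lie inside the block upper triangular set (they trivially do as diagonal matrices) and that the block partition is nontrivial, both being automatic once the condensation has more than one vertex. Beyond that, the argument largely parallels Proposition \ref{thm3}, with the extra bookkeeping for the diagonal degree of freedom that distinguishes $\mathfrak{gl}(n)$ from $\mathfrak{sl}(n)$.
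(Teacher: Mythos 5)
Your proposal is correct, and the sufficiency half is essentially the paper's argument: strong connectivity gives every $E_{ij}$, $i\neq j$, via iterated brackets along a directed path (the paper packages this as Lemma \ref{lem9}, built on the transitive-closure Lemmas \ref{lem7}--\ref{lem8}), then $[E_{ij},E_{ji}]=E_{ii}-E_{jj}$ yields $\mathfrak{sl}(n)$, and the self-loop generator $E_{kk}$ lifts this to all of $\mathfrak{gl}(n)$ exactly as in the paper. Where you genuinely diverge is necessity. The paper argues by generation: Lemma \ref{lem8} characterizes which basis elements of $\mathpzc{E}_1\mcup\mathpzc{E}_2$ can ever appear as iterated brackets of the generators, and a missing arc or missing self-loop then forces the dimension of the generated algebra below $n^2$; this hinges on the structural fact that every iterated bracket of elements of $\mathpzc{E}_1\mcup\mathpzc{E}_2\mcup\mathpzc{E}_3$ is a scalar multiple of some $E_{ij}$ or some $E_{ii}-E_{jj}$. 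You instead exhibit explicit proper Lie subalgebras containing all generators: $\mathfrak{sl}(n)$ when there is no self-loop (since all commutators are traceless), and the block upper triangular subalgebra determined by a sink component of the condensation when strong connectivity fails. Both obstructions are valid --- the sink-component choice correctly guarantees that no generator $E_{i_kj_k}$ has a row index inside the terminal block and a column index outside it, and diagonal generators are harmless --- and your invariant-subalgebra route is arguably cleaner and more robust, since it avoids the bookkeeping about which matrices are reachable by iterated brackets. The trade-off is that the paper's generation machinery is reused verbatim in the drift cases (Theorems \ref{thm4}, \ref{thm6}, \ref{thm7}), so its necessity argument here comes essentially for free, whereas your subalgebra construction is tailored to the driftless statement.
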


The extra condition on $\mathcal{G}_{\rm contr}^{\ast}$ possessing at least one self-loop cannot be removed from Proposition \ref{thm5}. Without such a condition, the terms $E_{ii}, i\in V$ will not be in the generated Lie algebra of $\{E_{i_1j_1},\dots,E_{i_mj_m}\}$. As a result, not the full $\mathfrak{gl}(n)$ can be generated, and the system connot be controllable based on Theorem 1.

In the presence of the drift term $A$, under the same conditions as Theorem \ref{thm4} of system \eqref{eq:model2},
the accessibility of the system \eqref{eq:model3} is related to the strong connectivity of $\mathcal{G}_{\rm drift}^{\ast} \mcup \mathcal{G}_{\rm contr}^{\ast}$ and whether $\mathcal{G}_{\rm contr}^{\ast}$ has a self-loop.
The following theorem shows that if the trace of matrix $A$ is not equal to zero, even if $\mathcal{G}_{\rm contr}^{\ast}$ has no self-loop, the conclusion is still true.

\begin{theorem}\label{thm6}
	Suppose the weakly connected components of $\mathcal{G}_{\rm contr}^{\ast}$  are all strongly connected with at least two nodes, and one of them contains at least three nodes. Then the system \eqref{eq:model3} is accessible on ${\rm GL^{+}}(n)$ if and only if the following conditions hold:
	
	$(i)$ The union digraph $\mathcal{G}_{\rm drift}^{\ast} \mcup \mathcal{G}_{\rm contr}^{\ast}:=(\mathrm{V},\mathcal{E}_{\rm drift}^{\ast} \mcup \mathcal{E}_{\rm contr}^{\ast})$ is strongly connected;
	
	$(ii)$  $\mathcal{G}_{\rm contr}^{\ast}$ has at least one self-loop or ${\rm tr}A\neq0$.
\end{theorem}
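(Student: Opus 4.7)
The plan is to reduce the problem to Theorem~\ref{thm4} on ${\rm SL}(n)$ by exploiting the central decomposition $\mathfrak{gl}(n) = \mathfrak{sl}(n) \oplus \mathbb{R} I_n$. Set $\mathcal{L} := \{A, E_{i_1 j_1}, \ldots, E_{i_m j_m}\}_{\rm LA}$ and let $\pi:\mathfrak{gl}(n)\to\mathfrak{sl}(n)$, $\pi(M)=M-\tfrac{\tr M}{n}I_n$, denote the trace projection. Because $I_n$ is central and every commutator is traceless, $\pi$ is a surjective Lie algebra homomorphism with kernel $\mathbb{R} I_n$, so $\pi(\mathcal{L})$ is the Lie subalgebra of $\mathfrak{sl}(n)$ generated by $\tilde{A}:=A-\tfrac{\tr A}{n}I_n$, the off-diagonal $E_{i_k j_k}$ (already traceless), and the shifts $\tilde{E}_{i_k i_k}:=E_{i_k i_k}-\tfrac{1}{n}I_n$ indexed by the self-loops of $\mathcal{G}_{\rm contr}^{\ast}$. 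The overall strategy is to first establish $\pi(\mathcal{L})=\mathfrak{sl}(n)$ via Theorem~\ref{thm4}, and then promote this to $\mathcal{L}=\mathfrak{gl}(n)$ using condition~$(ii)$.

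For necessity, the two conditions are handled separately. If $\mathcal{G}_{\rm drift}^{\ast}\mcup\mathcal{G}_{\rm contr}^{\ast}$ is not strongly connected, choose $p,q\in\mathrm{V}$ with no directed path from $p$ to $q$, let $V_{1}$ be the set of nodes forward-reachable from $p$, and set $V_{2}=\mathrm{V}\setminus V_{1}\ni q$. By construction no arc of the union goes from $V_{1}$ to $V_{2}$, which forces $[M]_{ij}=0$ for every generator $M$ and every $i\in V_{1},\, j\in V_{2}$; this block-triangular structure is preserved by linear combinations and Lie brackets, placing $\mathcal{L}$ in a proper Lie subalgebra of $\mathfrak{gl}(n)$, so the system is not accessible by Theorem~\ref{basic}$(i)$. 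If instead $(ii)$ fails, i.e.\ there is no self-loop in $\mathcal{G}_{\rm contr}^{\ast}$ and $\tr A=0$, every generator is traceless and $\mathcal{L}\subseteq\mathfrak{sl}(n)\subsetneq\mathfrak{gl}(n)$.

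For sufficiency, I first verify that the hypotheses of Theorem~\ref{thm4} transfer to the off-diagonal parts of $\mathcal{G}_{\rm contr}^{\ast}$ and of the union digraph. Under the assumed size and strong-connectivity conditions on the weakly connected components of $\mathcal{G}_{\rm contr}^{\ast}$, every self-loop node must be incident to some off-diagonal arc (otherwise its weakly connected component would consist of a single node, contradicting the hypothesis), so deleting self-loops preserves both the weakly connected components and their strong connectivity, and the same holds for the union digraph. Applying Theorem~\ref{thm4} to the auxiliary system on ${\rm SL}(n)$ with drift $\tilde{A}$ and controls $\{E_{i_k j_k}:i_k\neq j_k\}$, the generated Lie subalgebra equals $\mathfrak{sl}(n)$. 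Since $\pi(\mathcal{L})$ contains these same generators and is contained in $\mathfrak{sl}(n)$, we conclude $\pi(\mathcal{L})=\mathfrak{sl}(n)$.

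From $\pi(\mathcal{L})=\mathfrak{sl}(n)$, every $E_{ij}$ with $i\neq j$ lifts to some $E_{ij}+c_{ij}I_n\in\mathcal{L}$. The hypotheses force $n\geq 3$, so for any $i\neq l$ I pick $j\notin\{i,l\}$ and compute $[E_{ij}+c_{1}I_n,\, E_{jl}+c_{2}I_n]=[E_{ij},E_{jl}]=E_{il}\in\mathcal{L}$, the $I_n$ terms dropping out by centrality; similarly $C_{ij}=[E_{ij},E_{ji}]\in\mathcal{L}$, so $\mathfrak{sl}(n)\subseteq\mathcal{L}$. Condition $(ii)$ then provides some $T\in\mathcal{L}$ with $\tr T\neq 0$ (either $T=A$, or $T=E_{i_0 i_0}$ from a self-loop); since $\pi(T)\in\mathfrak{sl}(n)\subseteq\mathcal{L}$, the difference $T-\pi(T)=\tfrac{\tr T}{n}I_n$ is a nonzero multiple of $I_n$ lying in $\mathcal{L}$, whence $I_n\in\mathcal{L}$ and $\mathcal{L}=\mathfrak{gl}(n)$. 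I expect the most delicate step to be the reduction to Theorem~\ref{thm4} in the third paragraph, which rests on the subtle observation that self-loops in $\mathcal{G}_{\rm contr}^{\ast}$ can be discarded without destroying the graph-theoretic conditions required by that theorem.
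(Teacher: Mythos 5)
Your proof is correct, and while it follows the same two-stage skeleton as the paper (first secure the traceless part, then add the remaining dimension using condition $(ii)$), the execution is genuinely different and in places cleaner. The paper's sufficiency argument re-runs the \emph{proof} of Theorem \ref{thm4} on the self-loop-free digraphs to conclude $\{A,E_{i_1j_1},\dots,E_{i_mj_m}\}_{\rm LA}\supset\mathfrak{sl}(n)$ directly, and then in the ${\rm tr}A\neq 0$ case performs an explicit telescoping computation with the elements $E_{ii}-E_{jj}$ to extract $({\rm tr}A)E_{i_1i_1}$; you instead invoke Theorem \ref{thm4} as a black box on a legitimate auxiliary instance of system \eqref{eq:model2} (drift $A-\tfrac{{\rm tr}A}{n}I_n$, off-diagonal controls), which only gives $\pi(\mathcal{L})=\mathfrak{sl}(n)$ and therefore requires your extra lifting step ($[E_{ij}+c_1I_n,E_{jl}+c_2I_n]=E_{il}$ using $n\geq 3$) to recover $\mathfrak{sl}(n)\subseteq\mathcal{L}$, after which the identity $T-\pi(T)=\tfrac{{\rm tr}T}{n}I_n$ replaces the paper's telescoping sum. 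What your route buys is rigor and modularity: the projection $\pi$ makes precise why the diagonal parts of $A$ and the $E_{ii}$ generators cannot interfere with the $\mathfrak{sl}(n)$ argument (a point the paper glosses over when it "similarly defines" $\widetilde{A}$ and $\mathcal{SG}_{\rm valid}$), and your block-triangular invariance argument for the necessity of strong connectivity is more explicit than the paper's one-line assertion; the cost is the additional lifting step and the need to verify that deleting self-loops preserves the component structure, which you correctly justify from the two-node hypothesis. Both proofs are sound.
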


Note that ${\rm tr}A\neq0$ implies $A\notin \mathfrak{sl}(n).$ Then some elements in $\mathpzc{E}_{1}$ can be derived from A. Example 5 will demonstrate this process.

In particular, when $\mathcal{G}_{\rm contr}^{\ast}$ has at least one self-loop, the following theorem gives a sufficient condition for system \eqref{eq:model3} to be accessible on ${\rm GL^{+}}(n)$, which no longer requires $\mathcal{G}_{\rm contr}^{\ast}$ to have a weakly connected component containing at least three nodes.

\begin{theorem}\label{thm7}
	Suppose each weakly connected component of $\mathcal{G}_{\rm contr}^{\ast}$ is strongly connected and contains at least two nodes. Then the system \eqref{eq:model3} is accessible on ${\rm GL^{+}}(n)$ if the union digraph $\mathcal{G}_{\rm drift}^{\ast} \mcup \mathcal{G}_{\rm contr}^{\ast}:=(\mathrm{V},\mathcal{E}_{\rm drift}^{\ast} \mcup \mathcal{E}_{\rm contr}^{\ast})$ is strongly connected and $\mathcal{G}_{\rm contr}^{\ast}$ has at least one self-loop.
\end{theorem}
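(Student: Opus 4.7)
The plan is to verify the Lie algebra rank condition of Theorem~\ref{basic}(i), showing that the generated Lie subalgebra $\mathcal{A}=\{A,E_{i_1j_1},\dots,E_{i_mj_m}\}_{\rm LA}$ fills out all of $\mathfrak{gl}(n)$, from which accessibility on ${\rm GL^{+}}(n)$ is immediate. Let $C_1,\dots,C_s$ denote the weakly connected components of $\mathcal{G}_{\rm contr}^\ast$, and assume without loss of generality that the self-loop lies in $C_1$. Applying Proposition~\ref{thm5} to the induced control sub-system on $C_1$ (strongly connected with a self-loop) delivers $E_{ij}\in\mathcal{A}$ for every $i,j\in C_1$, i.e.\ the full $\mathfrak{gl}$-block supported on $C_1$. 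For each remaining $C_\alpha$, strong connectivity together with the identities $[E_{ij},E_{jk}]=E_{ik}$ for $i\neq k$ and $[E_{ij},E_{ji}]=E_{ii}-E_{jj}$ provides every off-diagonal $E_{ij}$ with $i,j\in C_\alpha$ and every difference $C_{ij}=E_{ii}-E_{jj}$ within $C_\alpha$; only the individual diagonal entries $E_{ii}$ outside $C_1$ are still missing.

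The heart of the proof is an inductive growth of the set $V_\star\subseteq\mathrm{V}$ over which $\mathcal{A}$ already contains the complete $\mathfrak{gl}$-block, starting from $V_\star=C_1$. Fix any $i\in V_\star$, so $E_{ii}\in\mathcal{A}$. Combining $[E_{ii},A]$ and $[E_{ii},[E_{ii},A]]$ and subtracting off the part supported on $V_\star$ (already in $\mathcal{A}$) isolates the bridging vectors
\[
R_i=\sum_{q\notin V_\star}[A]_{iq}E_{iq},\qquad C_i=\sum_{p\notin V_\star}[A]_{pi}E_{pi}.
\]
Since $V_\star$ is a union of weakly connected components of $\mathcal{G}_{\rm contr}^\ast$, no controlled arc crosses $\partial V_\star$, and the strong connectivity of $\mathcal{G}_{\rm drift}^\ast\mcup\mathcal{G}_{\rm contr}^\ast$ forces at least one of $R_i,C_i$ to be nonzero for some $i\in V_\star$. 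Say $[A]_{ir}\neq 0$ with $r\in C_\beta\not\subseteq V_\star$; then for any $s\in C_\beta\setminus\{r\}$ the element $E_{rs}$ lies in $\mathcal{A}$ by the previous paragraph, and a short computation gives $[R_i,E_{rs}]=[A]_{ir}E_{is}$. Hence $E_{is}\in\mathcal{A}$, and $[E_{is},E_{si}]=E_{ii}-E_{ss}$ produces $E_{ss}$. Combining $E_{ss}$ with the block-internal $C_{ss'}$'s gives every $E_{s's'}$ inside $C_\beta$, while brackets of the form $[E_{is},E_{sp}]=E_{ip}$, $[E_{ps},E_{si}]=E_{pi}$, and $[E_{pi},E_{is'}]=E_{ps'}$ fill in every cross term between $V_\star$ and $C_\beta$, enlarging $V_\star$ to $V_\star\cup C_\beta$.

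Iterating this absorption step and invoking the strong connectivity of $\mathcal{G}_{\rm drift}^\ast\mcup\mathcal{G}_{\rm contr}^\ast$ at each stage, $V_\star$ eventually exhausts $\mathrm{V}$, so $\mathcal{A}=\mathfrak{gl}(n)$ and Theorem~\ref{basic}(i) yields accessibility. The main obstacle is the disentanglement step: $R_i$ can a priori mix contributions from several distinct outside components, and one must check that bracketing against the intra-component off-diagonals surgically extracts a single $E_{is}$ without creating new uncontrolled terms along the way. The symmetric case in which the only crossing drift arc enters $V_\star$ from outside is handled by the analogous computation with $C_i$ and a bracket of the form $[E_{sp},C_i]=[A]_{pi}E_{si}$; one must also keep track that, after each absorption, $V_\star$ remains a union of weakly connected components of $\mathcal{G}_{\rm contr}^\ast$, so that the boundary structure exploited in the induction is preserved at every iteration.
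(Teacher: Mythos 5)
Your overall strategy---certifying the rank condition by growing a node set $V_\star$ carrying the full $\mathfrak{gl}$-block, and using brackets with the known diagonal elements $E_{ii}$ to peel off the drift row $R_i=\sum_{q\notin V_\star}[A]_{iq}E_{iq}$ and column $C_i=\sum_{p\notin V_\star}[A]_{pi}E_{pi}$---is in the same spirit as the paper's argument, which brackets $\widetilde{A}$ with the self-loop $E_{v_{11}v_{11}}$ for exactly this purpose; and the extraction $[R_i,E_{rs}]=[A]_{ir}E_{is}$ is a clean, exact computation (your worry about $R_i$ mixing several outside components is unfounded, since $s\notin V_\star$ kills the second Kronecker term in Lemma \ref{lem6}). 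The genuine gap is in the absorption step. Having obtained $E_{is}$ with $i\in V_\star$, $s\in C_\beta$, you produce $E_{ss}$ from $[E_{is},E_{si}]=E_{ii}-E_{ss}$; but $E_{si}$ is a cross term pointing from $C_\beta$ \emph{back into} $V_\star$, and nothing established up to that point puts it in $\mathcal{A}$. A single outgoing drift arc $i\to r$ yields only the forward cross terms: the block upper-triangular matrices (with respect to the partition $V_\star$ versus $C_\beta$) whose lower-right block is traceless form a Lie subalgebra containing the $V_\star$-block, the off-diagonal and traceless-diagonal part of the $C_\beta$-block, and every $E_{ip}$ with $i\in V_\star$, $p\in C_\beta$ --- but containing neither $E_{si}$ nor $E_{ss}$. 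So these elements cannot be reached without a drift arc returning to $V_\star$ from the forward-reachable region.

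Strong connectivity of $\mathcal{G}^{\ast}_{\rm drift}\mcup\mathcal{G}^{\ast}_{\rm contr}$ guarantees such a returning arc only along a directed cycle that may pass through several other components; it need not run from $C_\beta$ directly back into $V_\star$, and the incoming arc your argument invokes may originate in a different component $C_\gamma$. Hence, when the drift links the components cyclically, $C_1\to C_2\to\cdots\to C_k\to C_1$, your invariant (``$\mathcal{A}$ contains the complete $\mathfrak{gl}$-block on $V_\star$'') cannot be restored after the first absorption, and the induction stalls. This is precisely the difficulty the paper's proof circumvents via Step 3 of the proof of Theorem \ref{thm4}: it first propagates the forward cross terms $E_{ij}$, $i\in\mathrm{V}_1$, $j\notin\mathrm{V}_1$, all the way around a directed cycle covering every component, then separately propagates the backward cross terms by traversing the cycle in the opposite sense, and only afterwards assembles $\mathfrak{sl}(n)$ and adjoins the diagonal using the self-loop (or $\operatorname{tr}A\neq 0$). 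To repair your argument you would need to weaken the induction invariant in the same way---track forward-reachable and backward-reachable cross terms separately and defer the extraction of the diagonals $E_{ss}$ until the cycle closes---at which point it essentially reproduces the paper's proof.
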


\begin{remark}
	In fact, for Theorem \ref{thm7}, having a weakly connected component of $\mathcal{G}_{\rm contr}^{\ast}$ contain self-loops may replace the role of the condiotion that a weakly connected component of $\mathcal{G}_{\rm contr}^{\ast}$ contains at least three nodes. 
\end{remark} 

We would like to point out that Proposition \ref {thm5}  aligns with  Theorem III.9 of \cite{structural} on a type of structural controllability, with the exception of  the self-loop requirement. Also, Theorem \ref{thm6} is related to, but significantly different from Theorem V.4 in \cite{structural} on the structural controllability of bilinear systems:

(i) In the framework of Theorem V.4, the drift term $A$
is selected from the generated subspace of $E_{i_kj_k}, k=1,\dots,m$ to ensure the accessibility.
In our framework, $A$ may have completely independent structures with respect to  $E_{i_kj_k}, k=1,\dots,m$.

(ii) From a graphical point of view, the setup of Theorem V.4 in \cite{structural} is based on the implication that $\mathcal{G}_{\rm drift}^{\ast}$ is a subgraph of the graph generated by $\mathcal{G}_{\rm contr}^{\ast}$ under the  Lie bracket operation in the algebraic space.

Besides, in the current paper controllability is defined over the groups themselves and in \cite{structural} on their actions \cite{W1975, E1979}.
\subsection{Examples}
\noindent{\bf Example 5.} Consider the system \eqref{eq:model3} evolving on ${\rm GL^{+}}(5)$.
Let $A=E_{12}+2E_{15}+E_{32}-3E_{54}+4E_{11}-E_{22}$. Let $m=5$  and $E_{i_1j_1}= E_{12}$, $E_{i_2j_2}= E_{21}$, $E_{i_3j_3}= E_{54}$, $E_{i_4j_4}= E_{43}$, $E_{i_5j_5}= E_{35}$.
The controlled interaction digraph, the drift interaction digraph, and their union digraph are shown respectively in Figure \ref{fig5}.
\begin{figure}[H]
	\centering
	\parbox{4cm}{
		\includegraphics[width=4cm]{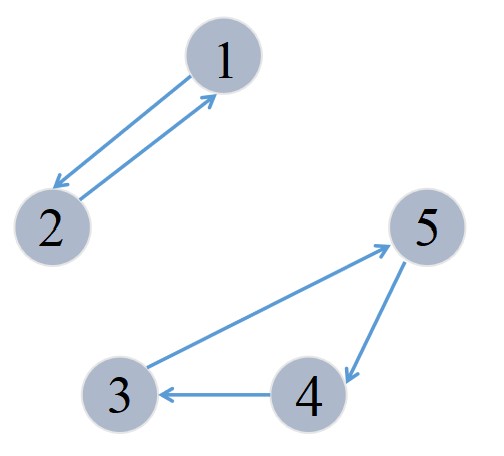}
		\subcaption{The digraph $\mathcal{G}_{\rm contr}^{\ast}$.}}
	\qquad
	\begin{minipage}{4cm}
		\includegraphics[width=4cm]{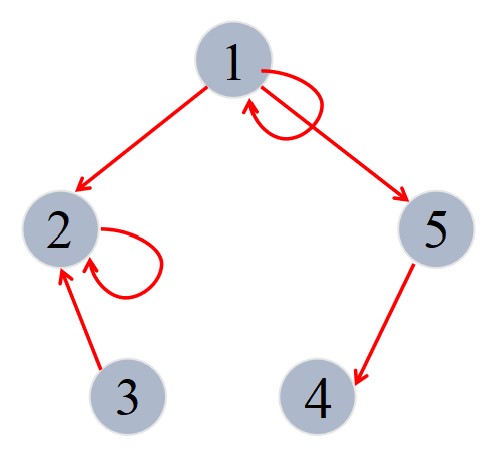}
		\subcaption{The digraph $\mathcal{G}_{\rm drift}^{\ast}$. }
	\end{minipage}
	\qquad
	\begin{minipage}{4cm}
		\includegraphics[width=4cm]{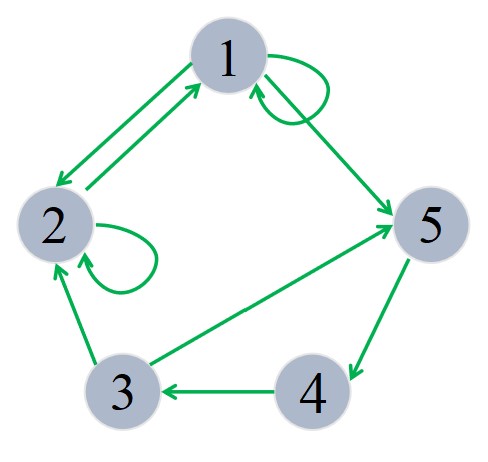}
		\subcaption{The union digraph. }
	\end{minipage}
	\caption{The controlled, drift, and union interaction digraphs for Example 5. }\label{fig5}
\end{figure}
It is clear that each weakly connected component of $\mathcal{G}_{\rm contr}^{\ast}$ is strongly connected with at least two nodes, and one of them contains at least three nodes.  The union digraph $\mathcal{G}_{\rm drift}^{\ast} \mcup \mathcal{G}_{\rm contr}^{\ast}$ is strongly connected and ${\rm tr}A=3\neq0$. As a result, the graphical condition of Theorem \ref{thm6} has been met.
Analysis similar to that in Example 3 shows that
$$\{A,E_{12},E_{21},E_{54},E_{43},E_{35}\}_{\rm LA}\supset\mathfrak{sl}(5).$$
Then $A-E_{12}-2E_{15}-E_{32}+3E_{54}=4E_{11}-E_{22}\in \{A,E_{12},E_{21},E_{54},E_{43},E_{35}\}_{\rm LA}$. Since $E_{jj}-E_{11}\in\mathfrak{sl}(5)$ for $2\leq j\leq 5$, we obtain
$(4E_{11}-E_{22})+(E_{22}-E_{11})=3E_{11}$ and
$$(E_{jj}-E_{11})+E_{11}=E_{jj}\in\{A,E_{12},E_{21},E_{54},E_{43},E_{35}\}_{\rm LA}.$$
Therefore
$$\{A,E_{12},E_{21},E_{54},E_{43},E_{35}\}_{\rm LA}=\mathfrak{gl}(5),$$
and the system \eqref{eq:model3} is indeed accessible from Theorem \ref {basic}. This provides a validation of Theorem \ref{thm6}.

\noindent{\bf Example 6.}
Consider the system \eqref{eq:model3} evolving on ${\rm GL^{+}}(4)$.
Let $A=2E_{23}-3E_{41}$. Let $m=5$  and $E_{i_1j_1}= E_{12}$, $E_{i_2j_2}= E_{21}$, $E_{i_3j_3}= E_{34}$, $E_{i_4j_4}= E_{43}$, $E_{i_5j_5}= E_{11}$ .
The controlled interaction digraph, the drift interaction digraph, and their union digraph, are shown respectively in Figure \ref{fig6}.
\begin{figure}[H]
	\centering
	\parbox{4cm}{
		\includegraphics[width=4cm]{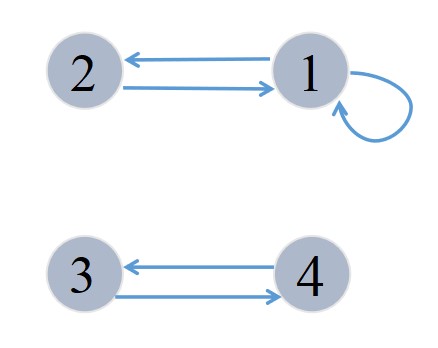}
		\subcaption{The digraph $\mathcal{G}_{\rm contr}^{\ast}$.}}
	\qquad
	\begin{minipage}{4cm}
		\includegraphics[width=4cm]{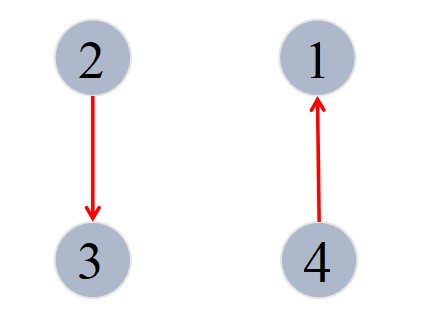}
		\subcaption{The digraph $\mathcal{G}_{\rm drift}^{\ast}$. }
	\end{minipage}
	\qquad
	\begin{minipage}{4cm}
		\includegraphics[width=4cm]{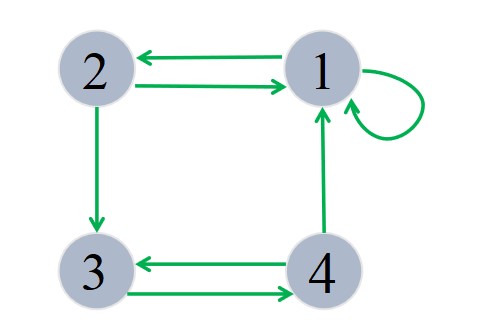}
		\subcaption{The union digraph. }
	\end{minipage}
	\caption{The controlled, drift, and union interaction digraphs for Example 6. }\label{fig6}
\end{figure}
It is clear that each weakly connected component of $\mathcal{G}_{\rm contr}^{\ast}$ is strongly connected and contains at least two nodes.  The union digraph $\mathcal{G}_{\rm drift}^{\ast} \mcup \mathcal{G}_{\rm contr}^{\ast}$ is strongly connected and $\mathcal{G}_{\rm contr}^{\ast}$ has one self-loop. As a result, the graphical condition of Theorem \ref{thm7} has been met.
By direct computation one can verify
$[A,E_{11}]=-3E_{41}, A-(-3E_{41})=2E_{23}$ and
$$
\big\{\{E_{12},E_{21},E_{34},E_{43},E_{11}\}\mcup\{E_{41},E_{23}\}\big\}_{\rm LA}=\mathfrak{gl}(4).
$$
Therefore,
$$\{A,E_{12},E_{21},E_{34},E_{43},E_{11}\}_{\rm LA}=\mathfrak{gl}(4),$$
and the system \eqref{eq:model3} is indeed accessible by Theorem \ref {basic}. This provides a validation of Theorem \ref{thm7}.

\noindent{\bf Example 7.}
Consider the system \eqref{eq:model3} evolving on ${\rm GL^{+}}(4)$.
Let $A=E_{23}+E_{41}+E_{11}+E_{33}$. Let $m=4$  and $E_{i_1j_1}= E_{12}$, $E_{i_2j_2}= E_{21}$, $E_{i_3j_3}= E_{34}$, $E_{i_4j_4}= E_{43}$.
The controlled interaction digraph, the drift interaction digraph, and their union digraph, are shown respectively in Figure \ref{fig7}.

It is clear that ${\rm tr}A\neq0$ and each weakly connected component of $\mathcal{G}_{\rm contr}^{\ast}$ is strongly connected with only two nodes. The union interaction digraph $\mathcal{G}_{\rm drift}^{\ast} \mcup \mathcal{G}_{\rm contr}^{\ast}$ continues to be strongly connected, but $\mathcal{G}_{\rm contr}^{\ast}$ has no self-loop.  By direct computation one can verify
$$\{A,E_{12},E_{21},E_{34},E_{43}\}_{\rm LA}\neq\mathfrak{gl}(4).$$
Therefore, the system \eqref{eq:model3} is not accessible.
\begin{figure}[H]
	\centering
	\parbox{4cm}{
		\includegraphics[width=4cm]{fig16}
		\subcaption{The digraph $\mathcal{G}_{\rm contr}^{\ast}$.}}
	\qquad
	\begin{minipage}{4cm}
		\includegraphics[width=4cm]{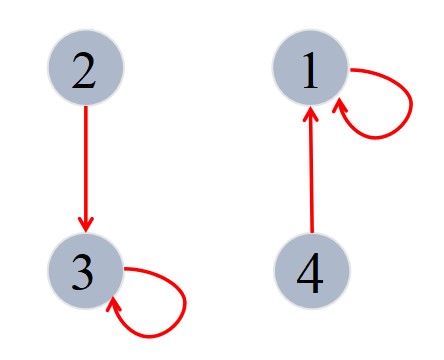}
		\subcaption{The digraph $\mathcal{G}_{\rm drift}^{\ast}$. }
	\end{minipage}
	\qquad
	\begin{minipage}{4cm}
		\includegraphics[width=4cm]{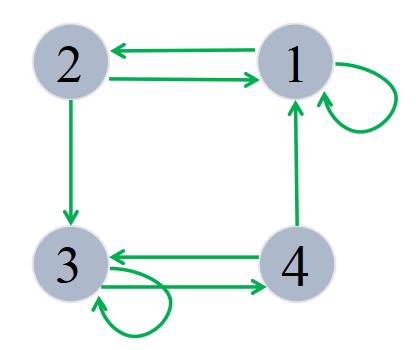}
		\subcaption{The union digraph. }
	\end{minipage}
	\caption{The controlled, drift, and union interaction digraphs for Example 7. }\label{fig7}
\end{figure}
This example shows that the strong connectivity of the union graph $\mathcal{G}_{\rm drift}^{\ast} \mcup \mathcal{G}_{\rm contr}^{\ast}$ cannot guarantee accessibility by itself even if ${\rm tr}A\neq0$. The weakly connected component size condition on $\mathcal{G}_{\rm contr}^{\ast}$ for Theorem \ref{thm6} is in fact rather tight. In addition, this example also shows that the requirement for the self-loop of $\mathcal{G}_{\rm contr}^{\ast}$ in Theorem \ref{thm7} is in fact rather tight.

\section{Conclusions}\label{sec:conc}
We have presented  graph-theoretic conditions  for the controllability and accessibility of bilinear systems over the special orthogonal group, the special linear group and the general linear group, with or without drift terms. A systemic approach was developed, for transforming the Lie bracket operations in the underlying Lie algebra space, into specific  operations of removing or creating links on the drift and controlled interaction  graphs. We established a series of purely graphical conditions on the controllability and accessibility of such bilinear systems, which rely only on the connectivity of the union of  the drift and controlled interaction graphs. Examples illustrated  that the  conditions have  in fact been  tight. In future works, it is of interest to investigate how the structure of the drift and controlled interaction graphs is related to the controllable submanifolds for the considered bilinear systems, when full controllability/accessibility is not achieved.
\section*{Appendix}
\subsection*{A. Proof of Proposition \ref{thm1}}
We first recall or establish a few auxiliary lemmas that are used in the proof.

\begin{lemma}\label{lem1}{\rm(\cite{li2017})}
	The Lie bracket of $B_{ij}$ and $B_{kl}$ in $\mathpzc{B}$ satisfies the relation
	$$
	[B_{ij},B_{kl}] = \delta_{jk}B_{il}+\delta_{il}B_{jk}+\delta_{jl}B_{ki}+\delta_{ik}B_{lj},
	$$
	where $\delta$ is the Kronecker delta function, i.e., $\delta_{mn}=1$ if $m=n$ and $\delta_{mn}=0$ otherwise.
\end{lemma}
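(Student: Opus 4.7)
The plan is to prove Lemma \ref{lem1} by direct matrix calculation, since $B_{ij}=E_{ij}-E_{ji}$ is defined in terms of the matrix units $E_{ab}$, whose products obey the simple rule $E_{ab}E_{cd}=\delta_{bc}E_{ad}$. Everything reduces to expanding the commutator $[B_{ij},B_{kl}]=B_{ij}B_{kl}-B_{kl}B_{ij}$ and collecting terms.

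First, I would expand the product $B_{ij}B_{kl}=(E_{ij}-E_{ji})(E_{kl}-E_{lk})$ into four summands and apply the matrix-unit rule to each. This yields
\[
B_{ij}B_{kl}=\delta_{jk}E_{il}-\delta_{jl}E_{ik}-\delta_{ik}E_{jl}+\delta_{il}E_{jk}.
\]
By the same computation with $i,j$ and $k,l$ swapped (or simply by observing the antisymmetry $B_{ab}=-B_{ba}$ in the Kronecker deltas), I would obtain
\[
B_{kl}B_{ij}=\delta_{li}E_{kj}-\delta_{lj}E_{ki}-\delta_{ki}E_{lj}+\delta_{kj}E_{li}.
\]

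Subtracting the two expressions and pairing each $E_{ab}$ with its partner $-E_{ba}$ that shares the same Kronecker prefactor, the eight terms collapse into four differences of the form $E_{ab}-E_{ba}=B_{ab}$. Matching coefficients term by term gives exactly
\[
[B_{ij},B_{kl}]=\delta_{jk}B_{il}+\delta_{il}B_{jk}+\delta_{jl}B_{ki}+\delta_{ik}B_{lj},
\]
as claimed.

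The only potential pitfall is purely clerical: keeping track of the eight signed terms and verifying that the four pairs combine with the correct signs into the four $B$-terms. There is no conceptual obstacle, since the identity $E_{ab}E_{cd}=\delta_{bc}E_{ad}$ does all the work, and the four Kronecker delta cases on the right-hand side correspond precisely to the four ways an index of $\{i,j\}$ can coincide with an index of $\{k,l\}$.
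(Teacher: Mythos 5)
Your computation is correct: expanding $B_{ij}B_{kl}$ and $B_{kl}B_{ij}$ via $E_{ab}E_{cd}=\delta_{bc}E_{ad}$ gives exactly the eight signed terms you list, and they pair up into $\delta_{jk}B_{il}+\delta_{il}B_{jk}+\delta_{jl}B_{ki}+\delta_{ik}B_{lj}$ as claimed. The paper itself supplies no proof for this lemma (it is quoted from the cited reference), and your direct matrix-unit calculation is the standard and complete way to verify it; there is no gap.
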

From Lemma \ref{lem1}, we see that for any $B_{ij},B_{kl}\in\mathpzc{B}$, $[B_{ij},B_{kl}]\ne 0$ if and only if exactly one of the four equalities: $i=l$, $j=k$, $i=k$, $j=l$, holds.	Next, we introduce the following definition.

\begin{definition}\label{def5}
	Let $\mathrm{G}$ be an undirected graph. Then the graph transitive closure mapping, $\mathcal{M}(\cdot)$, is defined as
	$$
	\mathcal{M}(\mathrm{G}) = \Big(\mathrm{V}, \mathrm{E}\mcup\big\{\{i,k\}\,:\,\exists j \ \text{s.t. }\ \{i,j\}\in \mathrm{E},~\{j,k\}\in \mathrm{E}\big\}\Big).
	$$
	We then recursively define $\mathcal{M}^{k}(\mathrm{G}):=\mathcal{M}(\mathcal{M}^{k-1}(\mathrm{G}))$ with $\mathcal{M}^{1}(\mathrm{G}):=\mathcal{M}(\mathrm{G})$.
\end{definition}

\begin{lemma}\label{lem:graph.expansion}
	$\mathrm{G}$ is connected if and  only if  there exists an integer $z$ such that $\mathcal{M}^{z}(\mathrm{G})$ is a complete graph.
\end{lemma}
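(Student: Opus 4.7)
The plan is to prove the two directions separately, using the key observation that the mapping $\mathcal{M}$ (a) never connects different connected components, yet (b) contracts path-distances by roughly a factor of two within any component.

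For the necessity direction ($\Leftarrow$), I would first observe that $\mathcal{M}$ preserves connected components: any new edge $\{i,k\}$ added by $\mathcal{M}$ arises from a witness $j$ with $\{i,j\},\{j,k\}\in\mathrm{E}$, so the triple $i,j,k$ lies in a common component of $\mathrm{G}$. Hence the partition of $\mathrm{V}$ into connected components is invariant under $\mathcal{M}$, and by induction also under $\mathcal{M}^z$ for any $z$. Consequently, if $\mathrm{G}$ is disconnected, $\mathcal{M}^{z}(\mathrm{G})$ remains disconnected for every $z$ and can never equal the complete graph on $\mathrm{V}$.

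For the sufficiency direction ($\Rightarrow$), assume $\mathrm{G}$ is connected on $n$ nodes and let $d_k(i,j)$ denote the graph distance between $i$ and $j$ in $\mathcal{M}^{k}(\mathrm{G})$ (setting $\mathcal{M}^0(\mathrm{G})=\mathrm{G}$). The key quantitative step is the distance-halving inequality
\[
d_{k+1}(i,j)\;\le\;\bigl\lceil d_k(i,j)/2\bigr\rceil.
\]
This follows by taking a shortest path $i=v_0,v_1,\dots,v_d=j$ in $\mathcal{M}^k(\mathrm{G})$ of length $d=d_k(i,j)$: by the definition of $\mathcal{M}$, the shortcut edges $\{v_{2\ell},v_{2\ell+2}\}$ belong to $\mathcal{M}^{k+1}(\mathrm{G})$, yielding a walk of length $\lceil d/2\rceil$ from $i$ to $j$. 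Since $\mathrm{G}$ is connected, the diameter of $\mathrm{G}$ is at most $n-1$, so iterating the inequality gives $d_z(i,j)\le\lceil (n-1)/2^{z}\rceil$ for every pair $i,j$. Choosing any $z\ge\log_2(n-1)$ forces $d_z(i,j)\le 1$ for all distinct $i,j\in\mathrm{V}$, which is precisely the statement that $\mathcal{M}^z(\mathrm{G})$ is complete.

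There is no real obstacle in the argument; the whole proof reduces to the two elementary observations above, namely component-invariance and distance-halving. The only minor point that needs care is to verify the distance-halving bound cleanly when $d$ is odd (the last "shortcut" hop then uses a single original edge rather than a pair), but this is handled uniformly by the ceiling. Combining the two directions then yields the stated equivalence.
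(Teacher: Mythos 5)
Your proof is correct, but it takes a genuinely different route from the paper's on the substantive direction (connectivity implies an eventually complete closure). The paper argues by saturation and contradiction: since $\mathcal{M}$ only adds edges and the edge set is finite, the iterates stabilize at a fixed point $\mathrm{G}_*$; if $\mathrm{G}_*$ missed an edge $\{i,k\}$, the neighborhoods $N_i$ and $N_k$ in $\mathrm{G}_*$ would have to be disjoint (a common neighbor would create $\{i,k\}$ under one more application of $\mathcal{M}$), and this is then shown to be incompatible with connectivity. Your distance-halving inequality $d_{k+1}(i,j)\le\lceil d_k(i,j)/2\rceil$ replaces that contradiction argument with a direct quantitative one, and it is sound: on a shortest path the vertices $v_{2\ell}$ and $v_{2\ell+2}$ are distinct, so the shortcut edges are legitimate two-hop closures, the monotonicity $\mathcal{M}(\mathrm{G})\supseteq\mathrm{G}$ handles the odd leftover hop, and the nested-ceiling identity lets you iterate to $d_z(i,j)\le\lceil(n-1)/2^z\rceil$. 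What your version buys is an explicit bound $z=\lceil\log_2(n-1)\rceil$ on the number of closure iterations, and hence on the depth of nested Lie brackets needed when the lemma is invoked in the proof of Proposition 1; the paper's version needs no arithmetic, only finiteness of the edge set. The component-invariance observation you use for the converse direction is exactly the content of the step the paper dismisses as ``obvious,'' so the two proofs agree there.
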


\begin{proof} The necessity is obvious, so we focus on the sufficiency part. Since $\mathcal{M}^{p}(\mathrm{G})$ is non-decreasing as $p$ increases, and the number of possible edges is finite, there exists an integer $z$ such that $\mathcal{M}^{p}(\mathrm{G})=\mathrm{G}_*$ for all $p\ge z$.
	
	Suppose $\mathrm{G}_*$ is not a complete graph, then there exists an edge $\{i,k\}\notin\mathrm{G}_*$. Therefore we can claim that for any $m\in\mathrm{V}\setminus\{i,k\}$, either $\{m,k\}\in\mathrm{G}_*$ or $\{m,i\}\in\mathrm{G}_*$ or $\{m,k\},\{m,i\}\notin\mathrm{G}_*$. Let $N_i:=\big\{j:\{i,j\}\in\mathrm{G}_*\big\}$ and $N_k:=\big\{j:\{k,j\}\in\mathrm{G}_*\big\}$, then from the connectivity of $\mathrm{G}$ we have $N_i\ne\emptyset$, $N_k\ne\emptyset$, and from the above claim that $N_i\cap N_k=\emptyset$.
	As a result, there must be two nodes $i_1\in N_i$, $k_1\in N_k$ such that $\{i_1,k_1\}\in\mathrm{G}_*$. Without loss of generality, we assume $i_1\ne i$ and $k_1\ne k$. From the definition of $\mathrm{G}_*$, we have $\{i,k_1\}\in\mathrm{G}_*$. Now we have a contradiction with $N_i\cap N_k=\emptyset$. The proof is complete.
\end{proof}

We are now in a position to present the detailed proof of Proposition \ref{thm1}.  Consider  $\mathscr{G}$ as the set of all undirected graphs over node set $\mathrm{V}$, and $\mathscr{B}$ as the set of all subsets of $\mathpzc{B}$. By identifying each edge $\{i,j\}$ with the matrix $B_{ij}$, we establish a natural $1-1$ correspondence, between each element in $\mathscr{G}$ and each element in $\mathscr{B}$. We denote such a mapping $\ell$ which maps from $\mathscr{B}$ to $\mathscr{G}$. For any $\mathpzc{B}_\ast \subseteq \mathpzc{B} $, we define
$$
f(\mathpzc{B}_\ast)= \mathpzc{B}_\ast \mcup \big\{[B_\alpha,B_\beta]: B_\alpha,B_\beta \in\mathpzc{B}_\ast  \big\}.
$$
From Lemma \ref{lem1}, we can conclude that
\begin{align}\label{eq1}
	\ell\big( f(\mathpzc{B}_\ast) \big) = \mathcal{M} \big(\ell(\mathpzc{B}_\ast)\big).
\end{align}

Then from  Lemma~\ref{lem:graph.expansion}, $f^z\big(\{B_{i_1j_1},\dots,B_{i_mj_m}\}\big) =\mathpzc{B}$ for some integer $z$ if and only if $\mathrm{G}_{\rm contr}$ is connected. On the other hand,  $f^z\big(\{B_{i_1j_1},\dots,B_{i_mj_m}\}\big)=\mathpzc{B}$ is equivalent to   $\{B_{i_1j_1},\dots,B_{i_mj_m}\}_{\rm LA}=\so(n)$. We have completed the proof.

\subsection*{B. Proof of Theorem \ref{thm2}}
In this subsection, we prove  Theorem \ref{thm2}.
Since $A\in\so(n)$ in system \eqref{eq:model1}, we can represent it uniquely in the form of
$$
A=\sum_{k=1}^{l} a_{k} B_{{i}^\ast_k{j}^\ast_k},
$$
where $a_k\neq 0\in\mathbb{R}$, $i^\ast_k,j^\ast_k\in\mathrm{V}$, and $B_{{i}^\ast_k{j}^\ast_k}\in\mathpzc{B}$. This gives $\mathrm{E}_{\rm drift}=\big\{\{i^\ast_1,j^\ast_1\},\dots,\{i^\ast_l,j^\ast_l\}\big\}$ by Definition \ref{def2}.

\begin{definition}\label{def6}
	Let $\mathrm{G}=(\mathrm{V},\mathrm{E})$ be an undirected graph. Given any node pair $(i,j)$ of $\mathrm{G}$, the graph
	$\mathcal{H}_{ij}(\mathrm{G}) := (\mathrm{V},\mathrm{E}_{ij})$ is called the circumjacent closure at node pair $(i,j)$ of $\mathrm{G}$ with
	$\mathrm{E}_{ij}=\mathrm{E}_{ij}^{1}\mcup\mathrm{E}_{ij}^{2}$, where
	\begin{align*}
		\mathrm{E}_{ij}^{1}&= \big\{\{i,k\}\,:\,\exists k \ {\rm s.t. }\ \{j,k\}\in \mathrm{E}\big\},  \mathrm{E}_{ij}^{2}= \big\{\{j,k\}\,:\,\exists k \ {\rm s.t. }\ \{i,k\}\in \mathrm{E}\big\}.
	\end{align*}
\end{definition}
{
	\begin{lemma}\label{lem3}
		Let $\mathrm{G}=(\mathrm{V},\mathrm{E})$ be an undirected graph. Suppose for ${i},{j}\in \mathrm{V}, \{i,j\}\notin\mathrm{E}$, and $\deg({i})=k$,\ $\deg({j})=l$. Then
		$\mathcal{H}_{{i}{j}}(\mathrm{G})= (\mathrm{V},\mathrm{E}_{{i}{j}})$ satisfies
		
		$(i)$ $|\mathrm{E}_{{i}{j}}|=k+l$, where $|\cdot|$ is the number of elements in set;
		
		$(ii)$ $\deg({i})=l$,\ $\deg({j})=k$;
		
		$(iii)$ All nodes have zero degree except for ${i}$, ${j}$ and their neighbors.
	\end{lemma}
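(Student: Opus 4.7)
The plan is to unpack the definition of $\mathcal{H}_{ij}(\mathrm{G})$ and verify the three claims by direct bookkeeping. First I would set up notation: let $N(i) = \{v : \{i,v\} \in \mathrm{E}\}$ denote the neighborhood of $i$ in $\mathrm{G}$, so $|N(i)| = k$ and $|N(j)| = l$. By Definition \ref{def6}, $\mathrm{E}_{ij}^1$ is precisely the set $\{\{i,v\} : v \in N(j)\}$, and $\mathrm{E}_{ij}^2$ is $\{\{j,v\} : v \in N(i)\}$. Since the elements of $N(j)$ are pairwise distinct, the map $v \mapsto \{i,v\}$ is injective, so $|\mathrm{E}_{ij}^1| = l$; symmetrically $|\mathrm{E}_{ij}^2| = k$.

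For part (i), the key observation to establish $|\mathrm{E}_{ij}| = k + l$ is that $\mathrm{E}_{ij}^1 \cap \mathrm{E}_{ij}^2 = \emptyset$. Every edge in $\mathrm{E}_{ij}^1$ is incident to $i$, and every edge in $\mathrm{E}_{ij}^2$ is incident to $j$, so any common edge would have to be $\{i,j\}$ itself. To rule this out, I would use the hypothesis $\{i,j\} \notin \mathrm{E}$ together with the implicit simplicity of the graph: $\{i,j\} \in \mathrm{E}_{ij}^1$ would force $j \in N(j)$ (a self-loop), and symmetrically for $\mathrm{E}_{ij}^2$. Hence the two sets are disjoint and their union has size $k+l$.

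For part (ii), I would count the edges of $\mathcal{H}_{ij}(\mathrm{G})$ incident to $i$. Every edge in $\mathrm{E}_{ij}^1$ contains $i$; no edge in $\mathrm{E}_{ij}^2$ contains $i$ (this again uses $\{i,j\} \notin \mathrm{E}$ and no self-loops at $i$, since such an edge would have the form $\{j,v\}$ with $v \in N(i)$, and equaling $\{i,v'\}$ forces $v' = j$ and $v = i$). Hence the degree of $i$ in $\mathcal{H}_{ij}(\mathrm{G})$ equals $|\mathrm{E}_{ij}^1| = l$, and the symmetric argument gives degree $k$ for $j$.

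For part (iii), I would observe directly from the definition that every edge of $\mathrm{E}_{ij}$ has one endpoint in $\{i,j\}$ and the other endpoint in $N(i) \cup N(j)$. Any node outside $\{i,j\} \cup N(i) \cup N(j)$ therefore appears in no edge of $\mathrm{E}_{ij}$ and has zero degree in $\mathcal{H}_{ij}(\mathrm{G})$. The whole argument is pure bookkeeping; the only place where some care is needed is the disjointness in part (i), where the hypothesis $\{i,j\} \notin \mathrm{E}$ is essential, but this is a minor obstacle rather than a substantive one.
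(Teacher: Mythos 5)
Your proof is correct and follows essentially the same route as the paper's: the paper simply writes out $\mathrm{E}_{ij}$ explicitly as the union of the $l$ edges $\{i,j_1\},\dots,\{i,j_l\}$ and the $k$ edges $\{j,i_1\},\dots,\{j,i_k\}$ and declares the three claims "verified straightforwardly." You supply the bookkeeping the paper omits, in particular the disjointness of $\mathrm{E}_{ij}^1$ and $\mathrm{E}_{ij}^2$ and the role of the hypothesis $\{i,j\}\notin\mathrm{E}$ (which also guarantees that no degenerate element $\{i,i\}$ arises in the count $|\mathrm{E}_{ij}^1|=l$), so your write-up is a faithful, slightly more careful version of the same argument.
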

}
\begin{proof}
	Let $i_{1},\dots,i_{k}$ and $j_{1},\dots,j_{l}$ be the nodes in $\mathrm{V}$ that are adjacent to ${i}$ and ${j}$, respectively. We can see directly by Definition \ref{def6}, that $$
	\mathrm{E}_{{i}{j}}=\big\{\{{i},j_{1}\},\dots,\{{i},j_{l}\}\big\}\mcup\big\{\{{j},i_{1}\},\dots,\{{j},i_{k}\}\big\}.
	$$
	The three statements (i), (ii), and (iii) can then be verified straightforwardly.
\end{proof}

{
	\begin{lemma}\label{lem4}
		Suppose the graph $\mathrm{G}=(\mathrm{X\mcup Y},\mathrm{E})$ is a bi-graph with $|\mathrm{X}|\geq3,|\mathrm{Y}|\geq3$ and $|\mathrm{E}|\geq1$. Then there exists  a finite sequence of node pairs $(i_1,j_1),\dots,(i_z,j_z)$ for some integer $z\geq 1$ such that
		
		$(i)$ Either $i_{s},j_{s}\in \mathrm{X}$ or $i_{s},j_{s}\in \mathrm{Y}$ for $s=1,2,\dots,z$;
		
		$(ii)$ $\mathcal{H}_{i_{z}j_{z}}\big(\cdots\mathcal{H}_{i_{2}j_{2}}(\mathcal{H}_{i_{1}j_{1}}(\mathrm{G}))\big):=(\mathrm{X\mcup Y},\mathrm{E}_{i_{z}j_{z}})$ is also a bi-graph with $|\mathrm{E}_{i_{z}j_{z}}|=1$.
	\end{lemma}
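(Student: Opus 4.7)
The plan is to argue by strong induction on the edge count $n := |\mathrm{E}|$. The pivotal observation is Lemma~\ref{lem3}(i): whenever $i,j$ lie on a common side of the bipartition (in which case $\{i,j\} \notin \mathrm{E}$ holds automatically), the closure $\mathcal{H}_{ij}(\mathrm{G})$ contains exactly $\deg(i) + \deg(j)$ edges, and every such edge runs from $\{i,j\}$ to the opposite side. Hence each admissible closure operation preserves the bi-graph structure on $\mathrm{X} \cup \mathrm{Y}$ and reduces (or changes) the edge count in a fully controlled way. The goal is therefore to exhibit at each stage a same-side pair whose degree sum lies in $[1, n-1]$.

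For the base case $n=1$, I write the unique edge as $\{a,b\}$ with $a \in \mathrm{X}$ (without loss of generality). Since $|\mathrm{X}| \geq 3$ and $a$ is the only $\mathrm{X}$-vertex of positive degree, I pick $c \in \mathrm{X} \setminus \{a\}$ with $\deg(c)=0$; the single closure $\mathcal{H}_{a,c}$ then yields the bi-graph with unique edge $\{c,b\}$, completing the claim with $z=1$.

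For the inductive step $n \geq 2$, it suffices to find a same-side pair $(i,j)$ with $1 \leq \deg(i) + \deg(j) \leq n-1$, since applying $\mathcal{H}_{ij}$ then produces a bi-graph to which the inductive hypothesis applies. Let $\mathrm{V}_{\mathrm{X}}$ denote the set of positive-degree vertices in $\mathrm{X}$. If $|\mathrm{V}_{\mathrm{X}}| \geq 2$, such a pair can be found on the $\mathrm{X}$ side: when $|\mathrm{V}_{\mathrm{X}}| < |\mathrm{X}|$, pair a minimum positive-degree vertex with an isolated one, yielding a sum between $1$ and $n/|\mathrm{V}_{\mathrm{X}}| \leq n/2 \leq n-1$; when $|\mathrm{V}_{\mathrm{X}}| = |\mathrm{X}|$, the two smallest degrees sum to at most $2n/|\mathrm{X}| \leq 2n/3 \leq n-1$ (using $n \geq |\mathrm{X}| \geq 3$) and at least $2$. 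The remaining case is $|\mathrm{V}_{\mathrm{X}}|=1$: then the unique $a \in \mathrm{X}$ with positive degree has $\deg(a)=n$, and $\mathrm{Y}$ contains exactly $n$ neighbors of $a$, each of degree $1$. Here I switch to the $\mathrm{Y}$ side: if $n < |\mathrm{Y}|$, pair a degree-$1$ and a degree-$0$ $\mathrm{Y}$-vertex (sum $1$); if $n = |\mathrm{Y}|$, pair two degree-$1$ $\mathrm{Y}$-vertices (sum $2 \leq n-1$, legitimate because $n = |\mathrm{Y}| \geq 3$).

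The main obstacle is precisely the subcase $|\mathrm{V}_{\mathrm{X}}|=1$ with $n = |\mathrm{Y}|$, in which no single closure on the $\mathrm{X}$ side can strictly reduce the edge count (any same-side $\mathrm{X}$-pair yields degree sum $0$ or $n$). The hypothesis $|\mathrm{Y}| \geq 3$ is exactly what salvages the argument, by guaranteeing $n \geq 3$ and allowing a two-degree-$1$ $\mathrm{Y}$-closure to strictly reduce $n$ to $2$. The symmetric treatment with the roles of $\mathrm{X}$ and $\mathrm{Y}$ exchanged completes the case analysis, and iterating produces a finite sequence of same-side closures terminating at a bi-graph with a single edge, as required.
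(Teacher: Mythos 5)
Your proof is correct, but it follows a genuinely different route from the paper's. The paper gives a short explicit construction: if $\mathrm{G}$ has no isolated node it first applies one closure $\mathcal{H}_{x_1x_2}$ to create isolated nodes on the $\mathrm{X}$ side (possible since $|\mathrm{X}|\geq 3$); then, with an isolated node $x_1$ available, it transplants all edges of some positive-degree node onto $x_1$ via $\mathcal{H}_{x_1x_i}$, producing a star centered at $x_1$, and collapses that star in two further steps (a pair of degree-one leaves, then a degree-one/degree-zero pair on the $\mathrm{Y}$ side), so at most four closures suffice in total. You instead run a strong induction on $|\mathrm{E}|$, exploiting the exact count $|\mathrm{E}_{ij}|=\deg(i)+\deg(j)$ from Lemma~\ref{lem3} and an averaging argument to exhibit, at every stage, a same-side pair whose degree sum lies in $[1,n-1]$; the hypotheses $|\mathrm{X}|,|\mathrm{Y}|\geq 3$ enter through the bounds $2n/|\mathrm{X}|\leq n-1$ and $n=|\mathrm{Y}|\geq 3$ in the one-center case, which is exactly the right place for them. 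Your case analysis checks out (note that the concluding appeal to $\mathrm{X}$--$\mathrm{Y}$ symmetry is not actually needed, since the trichotomy on $|\mathrm{V}_{\mathrm{X}}|$ is already exhaustive). What each approach buys: the paper's argument yields a uniform bound of four closure operations and makes the final single edge explicit, which is convenient when the lemma is invoked inside the proof of Theorem~\ref{thm2}; your induction is more systematic and self-contained, at the cost of a longer (though still logarithmic-length) closure sequence and a slightly heavier counting argument.
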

}
\begin{proof}
	It is evident that the statement holds for $|\mathrm{E}|=1$. In the rest of the proof, we focus on $|\mathrm{E}|\geq 2$.  Assume $\mathrm{X}=\{x_{1},x_{2},\dots,x_{m}\},\mathrm{Y}=\{y_{1},y_{2},\dots,y_{n}\}$ with $m\geq3$ and $n\geq3$.
	Based on whether the graph $\mathrm{G}$ contains zero degree nodes or not, our proof will be divided into two cases.
	\begin{itemize}
		\item Proof under Case (i): the graph $\mathrm{G}$ has at least one node with zero degree.
		
		Without loss of generality we assume $\deg(x_{1})=0$. Since $\sum_{i=1}^{m}\deg(x_{i})=|\mathrm{E}|\geq 2$, there exists a node $x_{i}\in \mathrm{X}, i\neq1$, such that $\deg(x_{i})=k\geq 1$. Let $y_{1},y_{2},\dots,y_{k}$ be the nodes in $\mathrm{Y}$ that are adjacent to $x_{i}$. Clearly $k\leq n$. Set $(i_1,j_1)=(x_{1},x_{i})$, and from Lemma \ref{lem3} we have
		$\mathcal{H}_{i_1j_1}(\mathrm{G}) = (\mathrm{X\mcup Y},\mathrm{E}_{i_1j_1})$ satisfying
		
		i)  $\deg(x_{1})=k$,\ $\deg(x_{2})= \deg(x_{3})=\cdots= \deg(x_{m})=0$;
		
		ii)  $\deg(y_{1})=\deg(y_{2})=\cdots= \deg(y_{k})=1$, $\deg(y_{k+1})= \deg(y_{k+2})=\cdots= \deg(y_{n})=0$;
		
		iii) $\mathrm{E}_{i_1j_1}= \big\{\{x_{1},y_{1}\},\{x_{1},y_{2}\},\dots,\{x_{1},y_{k}\}\big\}$ with $|\mathrm{E}_{i_1j_1}|=k$.
		
		So we need only one node pair $(i_1,j_1)$, i.e., $z=1$ to prove the statement for $k=1$.
		
		If $k\geq2$, then $\deg(y_{1})=1$,\ $\deg(y_{2})=1$ in graph $\mathcal{H}_{i_1j_1}(\mathrm{G})$. Now let $(i_2,j_2)=(y_{1},y_{2})$. We obtain by Lemma \ref{lem3} that
		$$\mathcal{H}_{i_2j_2}(\mathcal{H}_{i_1j_1}(\mathrm{G})) = \Big(\mathrm{X\mcup Y}, \big\{\{x_{1},y_{1}\},\{x_{1},y_{2}\}\big\}\Big).$$
		Hence, $\deg(y_{2})=1$,\ $\deg(y_{3})=0$ in graph $\mathcal{H}_{i_2j_2}(\mathcal{H}_{i_1j_1}(\mathrm{G}))$, which implies that we can select the pair of nodes $(i_3,j_3)=(y_{2},y_{3})$. As a result, we have
		$$\mathcal{H}_{i_3j_3}\big(\mathcal{H}_{i_2j_2}(\mathcal{H}_{i_1j_1}(\mathrm{G}))\big)= \Big(\mathrm{X\mcup Y}, \big\{\{x_{1},y_{3}\}\big\}\Big).$$
		Here $z=3$, and $\mathcal{H}_{i_3j_3}\big(\mathcal{H}_{i_2j_2}(\mathcal{H}_{i_1j_1}(\mathrm{G}))\big)$ is a bi-graph with $|\mathrm{E}_{i_{3}j_{3}}|=1$.
		
		\item Proof of Case (ii): the graph $\mathrm{G}$ has no node with zero degree.
		
		In this case, every node of $\mathrm{G}$ has a degree greater than or equal to one. Let $\deg(x_{1})=k$ and $\deg(x_{2})=l$, $k,l\geq 1$. Take $(i_1,j_1)=(x_{1},x_{2})$. Again by Lemma \ref{lem3} we have $\mathcal{H}_{i_1j_1}(\mathrm{G}) = (\mathrm{X\mcup Y},\mathrm{E}_{i_1j_1})$ satisfying
		
		i) $\deg(x_{1})=l$,\ $\deg(x_{2})= k$,\ $\deg(x_{3})=\dots= \deg(x_{m})=0,(m\geq3)$;
		
		ii) $|\mathrm{E}_{i_1j_1}|=k+l\geq 2$.
		
		Note that, graph $\mathcal{H}_{i_1j_1}(\mathrm{G})$ is also a bi-graph and has at least one node with zero degree. Therefore, it reduces to the Case (i).
	\end{itemize}
	We have now completed the proof of this lemma.
\end{proof}
\subsubsection*{B.1 Proof of Sufficiency for Theorem \ref{thm2}}

Let each connected component of $\mathrm{G}_{\rm contr}$ contain at least three nodes, and the union graph $\mathrm{G}_{\rm drift} \mcup \mathrm{G}_{\rm contr}:=(\mathrm{V},\mathrm{E}_{\rm drift} \mcup \mathrm{E}_{\rm contr})$ be connected. We proceed to prove the controllability of system \eqref{eq:model1}. If graph $\mathrm{G}_{\rm contr}$ is connected, then  by Theorems \ref{basic} and Proposition \ref{thm1}, the system \eqref{eq:model1} is controllable. Now we assume  graph $\mathrm{G}_{\rm contr}$ has $m$ connected components with $ m\geq2$. Let $\mathrm{G}_{\rm contr}^{i}=(\mathrm{V}_{i},\mathrm{E}_{\rm contr}^{i})$ denote the $i$-th connected component of  $\mathrm{G}_{\rm contr}$ for $i=1,\dots,m$. According to Lemma~\ref{lem:graph.expansion}, there exists an integer $z_{i}$ such that $\mathcal{M}^{z_{i}}(\mathrm{G}_{\rm contr}^{i})$ is a complete graph for each $i$. Therefore $\mathcal{M}^{z^{*}}(\mathrm{G}_{\rm contr})=\mcup_{i=1}^{m}\mathcal{M}^{z_{i}}(\mathrm{G}_{\rm contr}^{i})$, where  $z^{*}=\max_i\{z_{i}\}$. For simplicity we denote $$
\mathrm{G}_{\rm contr}^{*}=(\mathrm{V},\mathrm{E}_{\rm contr}^{*})=\mathcal{M}^{z^{*}}(\mathrm{G}_{\rm contr})
$$
with $\mathrm{E}_{\rm contr}^{*}=\mcup_{i=1}^{m}\big\{\{u,v\}:u, v\in\mathrm{V}_{i}\big\}$.
Lemma \ref{lem:graph.expansion} and (\ref{eq1}) yields that $\{B_{i_1j_1},\dots,B_{i_mj_m}\}_{\rm LA}$ is the vector space of matrices obtained by taking the span of $\{B_{ij}:\{i,j\}\in\mathrm{E}_{\rm contr}^{*}\}$.

Define $\mathrm{E}_{\rm valid}:=\mathrm{E}_{\rm drift}\setminus \mathrm{E}_{\rm contr}^{*}$ and $\mathrm{G}_{\rm valid}:= (\mathrm{V},\mathrm{E}_{\rm valid})$.
As the union graph $\mathrm{G}_{\rm drift} \mcup \mathrm{G}_{\rm contr}$ is connected while $\mathrm{G}_{\rm contr}$ is not, we always have $\mathrm{E}_{\rm valid}\neq\emptyset$; i.e., $|\mathrm{E}_{\rm valid}|\geq 1$.
Let
$$\widetilde{A}=\sum_{\{{i}^\ast_k,{j}^\ast_k\}\in\mathrm{E}_{\rm valid}}a_{k}B_{{i}^\ast_k{j}^\ast_k}.$$
To prove the controllability of system \eqref{eq:model1}, we need to consider $\{A,B_{i_1j_1},\dots,B_{i_mj_m}\}_{\rm LA}$. The following lemma states the equivalence of $\{A,B_{i_1j_1},\dots,B_{i_mj_m}\}_{\rm LA}$ and $\{\widetilde{A},B_{i_1j_1},\dots,B_{i_mj_m}\}_{\rm LA}$. This lemma is straightforward to establish from the basic properties of Lie algebras, so the detailed proof is omitted.

\begin{lemma}\label{lem5}
	The Lie algebra generated by the control and the drift terms of the bilinear system $\eqref{eq:model1}$ satisfies
	\begin{equation*}
		\{A,B_{i_1j_1},\dots,B_{i_mj_m}\}_{\rm LA}=\{\widetilde{A},B_{i_1j_1},\dots,B_{i_mj_m}\}_{\rm LA}.
	\end{equation*}
	
\end{lemma}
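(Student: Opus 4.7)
The plan is to reduce the lemma to a routine linear-algebra observation once the explicit description of the Lie algebra $\{B_{i_1j_1},\dots,B_{i_mj_m}\}_{\rm LA}$ derived in the preceding paragraph is in hand. First, I would split the drift term according to whether the edges of its basis expansion lie in $\mathrm{E}_{\rm contr}^\ast$ or in the complementary set $\mathrm{E}_{\rm valid}=\mathrm{E}_{\rm drift}\setminus \mathrm{E}_{\rm contr}^\ast$, writing
$$
A=\widetilde{A}+R,\qquad R:=\sum_{\{i^\ast_k,j^\ast_k\}\in \mathrm{E}_{\rm drift}\cap \mathrm{E}_{\rm contr}^\ast} a_{k}B_{i^\ast_k j^\ast_k}.
$$
This decomposition is well-defined by the uniqueness of the expansion of $A$ in the basis $\mathpzc{B}$ of $\so(n)$.

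Next, I would invoke the characterization established in the main text via Lemma~\ref{lem:graph.expansion} and the correspondence (\ref{eq1}), namely
$$
\{B_{i_1j_1},\dots,B_{i_mj_m}\}_{\rm LA}=\mathrm{span}_{\mathbb{R}}\big\{B_{ij}\,:\,\{i,j\}\in \mathrm{E}_{\rm contr}^\ast\big\}.
$$
Since every summand of $R$ is of this form, it follows immediately that $R\in \{B_{i_1j_1},\dots,B_{i_mj_m}\}_{\rm LA}$.

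Finally, I would appeal to the fact that a generated Lie subalgebra is in particular an $\mathbb{R}$-vector subspace, so appending to, or removing from, the generating set a vector that already lies in the generated Lie algebra does not change the Lie algebra. Applied to $R$, this gives
$$
\widetilde{A}=A-R\in \{A,B_{i_1j_1},\dots,B_{i_mj_m}\}_{\rm LA},\qquad A=\widetilde{A}+R\in \{\widetilde{A},B_{i_1j_1},\dots,B_{i_mj_m}\}_{\rm LA},
$$
yielding the two inclusions and hence the claimed identity.

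There is no genuine obstacle: the lemma is essentially a bookkeeping statement. The only points requiring care are that the explicit span description of $\{B_{i_1j_1},\dots,B_{i_mj_m}\}_{\rm LA}$ is used as a black box and that the decomposition $A=\widetilde{A}+R$ relies on the uniqueness of the basis expansion in $\mathpzc{B}$, which is the reason the statement above is insensitive to which specific combination of control terms generates $R$.
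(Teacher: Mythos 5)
Your proof is correct and is exactly the routine argument the paper has in mind: the paper omits the proof of this lemma as ``straightforward from the basic properties of Lie algebras,'' and your decomposition $A=\widetilde{A}+R$ with $R$ lying in $\operatorname{span}\{B_{ij}:\{i,j\}\in\mathrm{E}_{\rm contr}^{*}\}=\{B_{i_1j_1},\dots,B_{i_mj_m}\}_{\rm LA}$, followed by the vector-space closure of generated Lie subalgebras, supplies precisely the missing details. No gaps.
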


The remainder of the proof is divided into three steps.

\noindent{\it Step 1.} We first establish the relationship between Lie bracket $[\widetilde{A},B_{ij}]$ for $ B_{ij}\in \{B_{i_1j_1},\dots,B_{i_mj_m}\}_{\rm LA}$ and the circumjacent closure at node pair $(i,j)$ of $\mathrm{G}_{\rm valid}$. For any $D\in\so(n)$, $D$ can be represented in the form of $\sum_{k}d_{k}B_{{i}_k{j}_k}$ uniquely.
Define the function $\varphi$ that takes matrix $D\in\so(n)$ to an undirected graph $\mathrm{G}_{\rm D}:=(\mathrm{V},\mathrm{E}_{\rm D})$, where $\{i_k,j_k\}\in\mathrm{E}_{\rm D}$ if and only if $d_{k}\neq 0$. It is clear that $\varphi(\widetilde{A})=\mathrm{G}_{\rm valid}$.
From Definition \ref{def6}, there holds
\begin{equation}\label{eq2}
	\varphi\big([\widetilde{A},B_{ij}] \big)=\mathcal{H}_{ij}(\mathrm{G}_{\rm valid})
\end{equation}
for $B_{ij}\in \{B_{i_1j_1},\dots,B_{i_mj_m}\}_{\rm LA}$ and $\{i,j\}\in\mathrm{E}_{\rm contr}^{*}$.

\noindent{\it Step 2.} In this step, we prove the statement when $\mathrm{G}_{\rm contr}$ contains only two connected components. Let the graph $\mathrm{G}_{\rm contr}$ have exactly two connected components
$\mathrm{G}_{\rm contr}^{1}=(\mathrm{V}_{1},\mathrm{E}_{\rm contr}^{1})$ and $\mathrm{G}_{\rm contr}^{2}=(\mathrm{V}_{2},\mathrm{E}_{\rm contr}^{2})$. Then naturally $\mathrm{G}_{\rm valid}=(\mathrm{V}_{1}\bigcup\mathrm{V}_{2},\mathrm{E}_{\rm valid})$ is a bi-graph.
Lemma \ref{lem4} shows that there exist an integer $z\geq 1$ and a finite sequence of node pairs either $v_{s},u_{s}\in \mathrm{V}_{1}$ or $v_{s},u_{s}\in \mathrm{V}_{2}$ for $s=1,2,\dots,z$,
such that $\mathcal{H}_{v_{z}u_{z}}\big(\cdots(\mathcal{H}_{v_{1}u_{1}}(\mathrm{G}_{\rm valid}))\big)=\big(\mathrm{V}_{1}\mcup\mathrm{V}_{2},\mathrm{E}_{v_{z}u_{z}}\big)$ is also a bi-graph with $|\mathrm{E}_{v_{z}u_{z}}|=1$. Let $\mathrm{E}_{v_{z}u_{z}}=\big\{\{v^{*},u^{*}\}\big\}$, where $v^{*}\in\mathrm{V}_{1},u^{*}\in\mathrm{V}_{2}$.
Note that $\{v_{s},u_{s}\}\in\mathrm{E}_{\rm contr}^{*},$ so we have $B_{v_{s}u_{s}}\in \{B_{i_1j_1},\dots,B_{i_mj_m}\}_{\rm LA}$ for $ s=1,\dots,z$. As a result, we obtain from (\ref{eq2}) that
\begin{align*}
	\varphi\big([B_{v_{z}u_{z}},\dots,[B_{v_{2}u_{2}},[\widetilde{A},B_{v_{1}u_{1}}]]]\big)=\mathcal{H}_{v_{z}u_{z}}\big(\cdots(\mathcal{H}_{v_{1}u_{1}}(\mathrm{G}_{\rm valid}))\big).
\end{align*}
This  immediately implies
$$[B_{v_{z}u_{z}},\dots,[B_{v_{2}u_{2}},[\widetilde{A},B_{v_{1}u_{1}}]]]=a^{*}B_{v^{*}u^{*}},$$
where $a^{*}$ is the coefficient generated during the operation of the Lie brackets. It follows that $B_{v^{*}u^{*}}\in\{\widetilde{A},B_{i_1j_1},\dots,B_{i_mj_m}\}_{\rm LA}$, and thus
\begin{align}\label{eq3}
	\{\widetilde{A},B_{i_1j_1},\dots,B_{i_mj_m}\}_{\rm LA}
	=\{\widetilde{A},B_{v^{*}u^{*}},B_{i_1j_1},\dots,B_{i_mj_m}\}_{\rm LA}.
\end{align}
Because $\mathrm{G}_{\rm contr}$ has two connected components and $v^{*}\in\mathrm{V}_1,~u^{*}\in\mathrm{V}_2,$ $\ell\big(\{B_{v^{*}u^{*}},B_{i_1j_1},\dots,B_{i_mj_m}\}\big)$ is a connected graph. According to Lemma \ref{lem:graph.expansion} and (\ref{eq1}), we have
$$\{B_{v^{*}u^{*}},B_{i_1j_1},\dots,B_{i_mj_m}\}_{\rm LA}=\so(n).$$
Therefore, with Lemma \ref{lem5} and (\ref{eq3}), we can finally
conclude that
$\{A,B_{i_1j_1},\dots,B_{i_mj_m}\}_{\rm LA}=\so(n)$.
Consequently, the system \eqref{eq:model1} is controllable from Theorem \ref{basic}.

\noindent{\it Step 3}. In this step, we proceed to establish the result for the general case by induction on the number of connected components of $\mathrm{G}_{\rm contr}$.

\noindent{\it \textbf{Induction Hypothesis}}. If graph $\mathrm{G}_{\rm contr}$ contains $m\geq 2$ connected components, then the system \eqref{eq:model1} is controllable using this hypothesis, we will prove the corresponding condition $m$ replaced by $m+1$.

Recall that $\mathrm{G}_{\rm contr}^{i}=(\mathrm{V}_{i},\mathrm{E}_{\rm contr}^{i})$ is the $i$-th connected component of $\mathrm{G}_{\rm contr}$ for $i=1,\dots,m+1$.
From the definition of $\mathrm{E}_{\rm valid}$, we can deduce that $\mathrm{E}_{\rm valid}$ satisfies: i) all edges are between different $\mathrm{V}_{i}$ and no edges within each $\mathrm{V}_{i}$; ii) each $\mathrm{V}_{i}$ has at least one node with degree greater than zero, since $\mathrm{G}_{\rm drift} \mcup \mathrm{G}_{\rm contr}$ is connected.

Consider $\mathrm{G}_{\rm valid}$, we will now show that if there exist edges between nodes in $\mathrm{V}_{i}$ and nodes in $\mathrm{V}_{j}$, then a new basis element $B_{ij}\in\mathpzc{B}$, with $i\in\mathrm{V}_{i},j\in\mathrm{V}_{j}$, can be obtained by iterated Lie brackets of elements in $\{\widetilde{A},B_{i_1j_1},\dots,B_{i_mj_m}\}$.
For this purpose, we let $v_{11}$ be the node in $\mathrm{V}_{1}$ with $\deg(v_{11})=k>0$, and $v_{i_{1}j_{1}},\dots,v_{i_{k}j_{k}}$ denote the nodes in $\mcup_{i=2}^{m+1}\mathrm{V}_{i}$ that are adjacent to $v_{11}$.
For convenience, we let $v_{i_{1}j_{1}},\dots,v_{i_{r}j_{r}}$ be the nodes in $\mathrm{V}_{2}$, with $1\leq r\leq k$. For another node $v_{12}\in \mathrm{V}_{1}$, there are two possibilities for its degree: (i) $\deg(v_{12})=0$;
(ii) $\deg(v_{12})>0$.

\begin{itemize}
	\item Proof under Case (i): if $\deg(v_{12})=0$, then pick the pair of nodes $(v_{11},v_{12})$ and by Lemma \ref {lem3} we have
	$$\mathcal{H}_{v_{11}v_{12}}(\mathrm{G}_{\rm valid})=\big(\mathrm{V},\big\{\{v_{12},v_{i_{1}j_{1}}\},\dots,\{v_{12},v_{i_{k}j_{k}}\}\big\}\big).$$
	When $r\geq2$; i.e., $v_{i_{1}j_{1}},v_{i_{2}j_{2}} \in \mathrm{V}_{2}$, again from Lemma \ref {lem3}, there holds
	\begin{align*}
		\mathcal{H}_{v_{i_{1}j_{1}}v_{i_{2}j_{2}}}\big(\mathcal{H}_{v_{11}v_{12}}(\mathrm{G}_{\rm valid})\big)
		=\big(\mathrm{V},\big\{\{v_{12},v_{i_{1}j_{1}}\},\{v_{12},v_{i_{2}j_{2}}\}\big\}\big)
	\end{align*} by selecting the pair of nodes $(v_{i_{1}j_{1}},v_{i_{2}j_{2}})$.
	Because $|\mathrm{V}_{2}|\geq3$, there exists $v_{i^{*}j^{*}}\in\mathrm{V}_{2}$ such that $v_{i^{*}j^{*}}\neq v_{i_{1}j_{1}}$ and $v_{i^{*}j^{*}}\neq v_{i_{2}j_{2}}$.
	As a result, by selecting the node pair $(v_{i^{*}j^{*}}, v_{i_{1}j_{1}})$, we can obtain
	\begin{align*}
		\mathcal{H}_{v_{i^{*}j^{*}}v_{i_{1}j_{1}}}\big(\mathcal{H}_{v_{i_{1}j_{1}}v_{i_{2}j_{2}}}(\mathcal{H}_{v_{11}v_{12}}(\mathrm{G}_{\rm valid}))\big)
		=
		\big(\mathrm{V},\big\{\{v_{12},v_{i^{*}j^{*}}\}\big\}\big).
	\end{align*}
	Because $\{v_{11},v_{12}\}, \{v_{i_{1}j_{1}},v_{i_{2}j_{2}}\}, \{v_{i^{*}j^{*}}, v_{i_{1}j_{1}}\}\in\mathrm{E}_{\rm contr}^{*}$, $B_{v_{11}v_{12}},B_{v_{i_{1}j_{1}}v_{i_{2}j_{2}}},B_{v_{i^{*}j^{*}}v_{i_{1}j_{1}}} \in \{B_{i_1j_1},\dots,B_{i_mj_m}\}_{\rm LA}$.
	From this it follows that
	\begin{align*}
		\varphi\big([B_{v_{i^{*}j^{*}}v_{i_{1}j_{1}}},[B_{v_{i_{1}j_{1}}v_{i_{2}j_{2}}},[\widetilde{A},B_{v_{11}v_{12}}]]]\big)
		=\big(\mathrm{V},\big\{\{v_{12},v_{i^{*}j^{*}}\}\big\}\big).
	\end{align*}
	
	This implies
	$B_{v_{12}v_{i^{*}j^{*}}}\in\{\widetilde{A},B_{i_1j_1},\dots,B_{i_mj_m}\}_{\rm LA}$, and consequently,
	\begin{align}\label{eq4}
		\{\widetilde{A},B_{i_1j_1},\dots,B_{i_mj_m}\}_{\rm LA}
		=\{\widetilde{A},B_{v_{12}v_{i^{*}j^{*}}},B_{i_1j_1},\dots,B_{i_mj_m}\}_{\rm LA}.
	\end{align}
	
	Since node $v_{12} \in\mathrm{V}_{1}$, node $v_{i^{*}j^{*}}\in\mathrm{V}_{2}$ and $\mathrm{G}_{\rm contr}$, i.e., $\ell(\{B_{i_1,j_1},\dots,B_{i_m,j_m}\})$ has $m+1$  connected components, we have
	$\ell(\{B_{v_{12}v_{i^{*}j^{*}}},B_{i_1,j_1},\dots,B_{i_m,j_m}\})$ is a graph  with $m$ connected components.
	By the induction hypothesis, $\{\widetilde{A},B_{v_{12}v_{i^{*}j^{*}}},B_{i_1j_1},\dots,B_{i_mj_m}\}_{\rm LA}=\so(n).$ Therefore, the system (\ref{eq:model1}) is controllable by (\ref{eq4}). The same conclusion can be drawn for $r=1$, i.e., only $v_{i_{1}j_{1}}\in\mathrm{V}_{2}$. By selecting node pair $(v_{i^{*}j^{*}}, v_{i_{1}j_{1}})$, we also obtain
	$\mathcal{H}_{v_{i^{*}j^{*}}v_{i_{1}j_{1}}}(\mathcal{H}_{v_{11}v_{12}}(\mathrm{G}_{\rm valid}))
	=\big(\mathrm{V},\big\{\{v_{12},v_{i^{*}j^{*}}\}\big\}\big).$
	
	\item Proof of Case (ii): if $\deg(v_{12})=l> 0$, pick the pair of nodes $(v_{11},v_{12})$ and let $v_{i'_{1}j'_{1}},\dots,v_{i'_{l}j'_{l}}\in\mcup_{i=2}^{m+1}\mathrm{V}_{i}$ denote the neighbors of $v_{12}$. Then we obtain
	$$\mathcal{H}_{v_{11}v_{12}}(\mathrm{G}_{\rm valid})=(\mathrm{V},\mathrm{E}_{v_{11}v_{12}}),$$
	where
	\begin{align*}
		\mathrm{E}_{v_{11}v_{12}}=\big\{\{v_{12},v_{i_{1}j_{1}}\},\dots,\{v_{12},v_{i_{k}j_{k}}\}\big\}
		\mcup\big\{\{v_{11},v_{i'_{1}j'_{1}}\},\dots,\{v_{11},v_{i'_{l}j'_{l}}\}\big\}
	\end{align*}
	according to Lemma \ref{lem3}.
	Furthermore, the degree of each node in $\mathrm{V}_{1}$ of graph $\mathcal{H}_{v_{11}v_{12}}(\mathrm{G}_{\rm valid})$ is zero except for $v_{11}$ and $v_{12}$. Now $\mathrm{V}_{1}$ of graph $\mathcal{H}_{v_{11}v_{12}}(\mathrm{G}_{\rm valid})$ has at least one node with zero degree, which is due to the fact that $|\mathrm{V}_{1}|\geq3$. Hence $\mathcal{H}_{v_{11}v_{12}}(\mathrm{G}_{\rm valid})$ can be handled in the same way as shown above in Case (i), and, in consequence, the system \eqref{eq:model1} is controllable. This finishes the proof of the sufficiency.
\end{itemize}

\subsubsection*{B.2 Proof of Necessity  for Theorem \ref{thm2}}
Suppose $\mathrm{G}_{\rm contr}\mcup\mathrm{G}_{\rm drift}$ is not connected. Because the connected components are pairwise disjoint, there exist $\mathrm{V}_{i}$ and $\mathrm{V}_{j}$ such that the nodes in $\mathrm{V}_{i}$ and the nodes in $\mathrm{V}_{j}$ are not reachable from each other. Hence, the basis elements of $\so(n)$ like $B_{ij}$, with $i\in\mathrm{V}_{i}, j\in\mathrm{V}_{j}, i\neq j$, will never be in $\{\widetilde{A},B_{i_1j_1},\dots,B_{i_mj_m}\}_{\rm LA}$.
This implies $\{\widetilde{A},B_{i_1j_1},\dots,B_{i_mj_m}\}_{\rm LA}$ is a proper subset of $\so(n)$, and thus $$\{A,B_{i_1j_1},\dots,B_{i_mj_m}\}_{\rm LA}\neq\so(n)$$ by Lemma 5.
Therefore, the system \eqref{eq:model1} is not controllable by Theorem \ref{basic}, and consequently, if the system \eqref{eq:model1} is controllable, then the union graph $\mathrm{G}_{\rm contr}\mcup\mathrm{G}_{\rm drift}$ is connected.

The proof of this theorem is now completed.
\subsection*{C. Proof of Proposition \ref{thm3}}

\begin{lemma}\label{lem6}
	The Lie bracket of $E_{ij}$ and $E_{kl}$ in $\mathpzc{E}:=\mathpzc{E}_{2}\mcup\mathpzc{E}_{2}$ satisfies
	$$
	[E_{ij},E_{kl}] = \delta_{jk}E_{il}-\delta_{li}E_{kj},
	$$
	where $\delta$ is the Kronecker delta function.
\end{lemma}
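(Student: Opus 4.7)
The plan is to establish Lemma 6 by a straightforward entrywise computation of the matrix product of the elementary matrices $E_{ij}$ and $E_{kl}$. Recall that $E_{ij}$ is defined by $(E_{ij})_{pq}=\delta_{ip}\delta_{jq}$, so I would compute the $(p,q)$-entry of $E_{ij}E_{kl}$ as
$$
(E_{ij}E_{kl})_{pq} \;=\; \sum_{r} (E_{ij})_{pr}(E_{kl})_{rq}
\;=\; \sum_{r}\delta_{ip}\delta_{jr}\delta_{kr}\delta_{lq}
\;=\; \delta_{jk}\,\delta_{ip}\delta_{lq},
$$
which identifies $E_{ij}E_{kl}$ with $\delta_{jk}E_{il}$. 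Swapping the roles of the two index pairs in this identity, with no extra work required, gives $E_{kl}E_{ij}=\delta_{li}E_{kj}$, and subtracting the two products yields precisely
$$
[E_{ij},E_{kl}] \;=\; E_{ij}E_{kl}-E_{kl}E_{ij} \;=\; \delta_{jk}E_{il}-\delta_{li}E_{kj},
$$
which is the claimed formula.

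I do not expect any real obstacle here: this is the standard commutation relation for matrix units, and the computation is entirely mechanical. The only small point worth flagging in the write-up is that the set in the lemma statement should be read as $\mathpzc{E}=\mathpzc{E}_{1}\mcup\mathpzc{E}_{2}$ (so that both off-diagonal and diagonal matrix units are included); the identity above is valid for any choice of indices $i,j,k,l\in\mathrm{V}$ regardless of whether $i=j$ or $k=l$, so the lemma covers all cases uniformly.

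The conceptual role of this identity, which the subsequent digraph-based arguments will rely on, is that $[E_{ij},E_{kl}]$ is nonzero precisely when either $j=k$ or $l=i$, and in that case the commutator is (up to a sign) again a single basis element $E_{il}$ or $E_{kj}$. This is the directed analogue of Lemma 1, and it is what will allow the proofs for $\mathrm{SL}(n)$ and $\mathrm{GL}^+(n)$ to translate Lie bracket operations into arc-concatenation operations on $\mathcal{G}_{\rm contr}$ and $\mathcal{G}_{\rm drift}$, with the head-to-tail condition $j=k$ (or the symmetric $l=i$) playing the role of the adjacency condition that drove the undirected closure $\mathcal{M}(\cdot)$ in Definition~\ref{def5}.
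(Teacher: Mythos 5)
Your computation is correct and is the standard verification of the matrix-unit commutation relation; the paper itself states Lemma~\ref{lem6} without proof, so your entrywise calculation simply supplies the routine argument the authors omitted. Your side remark is also right that the set in the statement should read $\mathpzc{E}=\mathpzc{E}_{1}\mcup\mathpzc{E}_{2}$ (a typo in the paper), and the identity holds uniformly for all indices, including the diagonal case $i=j$ or $k=l$.
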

Next, we introduce the transitive closure mapping for a digraph.

\begin{definition}\label{def7}
	Let $\mathcal{G}$ be a simple digraph. The simple digraph transitive closure mapping, $\mathcal{M}(\cdot)$, is defined as
	$$
	\mathcal{M}(\mathcal{G}) = \Big(\mathrm{V}, \mathcal{E}\mcup\big\{(i,k)\,:\,\exists j \ \text{s.t. }\ (i,j)\in \mathcal{E},~(j,k)\in \mathcal{E},\ i\neq k \big\}\Big).
	$$
	We then recursively define $\mathcal{M}^{k}(\mathcal{G}):=\mathcal{M}(\mathcal{M}^{k-1}(\mathcal{G}))$ with $\mathcal{M}^{1}(\mathcal{G}):=\mathcal{M}(\mathcal{G})$.
\end{definition}
An analysis similar to that in the proof of Lemma \ref{lem:graph.expansion} shows that the following lemma holds.

\begin{lemma}\label{lem7}
	Let $\mathcal{G}$ be a simple digraph. There exists an integer $z$ such that the digraph $\mathcal{M}^{z}(\mathcal{G})$ is simple complete if and only if $\mathcal{G}$ is strongly connected.
\end{lemma}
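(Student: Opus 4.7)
The plan is to mirror the argument of Lemma~\ref{lem:graph.expansion}, adapting it from undirected to directed graphs; the structure of the proof should therefore have two halves (necessity and sufficiency), with the sufficiency part doing the real work.

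For necessity, I would proceed by observing that each arc produced by one application of $\mathcal{M}$ corresponds to concatenating two arcs of $\mathcal{G}$, i.e., to a directed path of length two in $\mathcal{G}$. Iterating, every arc $(i,k)\in\mathcal{M}^z(\mathcal{G})$ with $i\neq k$ is realized by a directed walk from $i$ to $k$ in the original digraph $\mathcal{G}$. Consequently, if $\mathcal{M}^z(\mathcal{G})$ is simple complete, then between any two distinct nodes there is a directed walk in $\mathcal{G}$, which is precisely strong connectivity.

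For sufficiency, I would mimic the fixed-point argument used in Lemma~\ref{lem:graph.expansion}. Since the arc set of $\mathcal{M}^p(\mathcal{G})$ is monotone non-decreasing in $p$ and the set of possible arcs on $\mathrm{V}$ is finite, there exists $z$ such that $\mathcal{M}^p(\mathcal{G})=\mathcal{G}_\ast$ for every $p\geq z$. It then suffices to show that $\mathcal{G}_\ast$ is simple complete. Assume toward a contradiction that some arc $(i,k)$ with $i\neq k$ is missing from $\mathcal{G}_\ast$. Since $\mathcal{G}$ is strongly connected and $\mathcal{G}\subseteq\mathcal{G}_\ast$, there is a directed path $i=v_0\to v_1\to\cdots\to v_m=k$ in $\mathcal{G}_\ast$ consisting of pairwise distinct nodes (any repetition can be shortcut), with $m\geq 2$ because $(i,k)\notin\mathcal{G}_\ast$. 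Applying the fixed-point property to the pair $(v_0,v_1),(v_1,v_2)\in\mathcal{G}_\ast$ yields $(v_0,v_2)\in\mathcal{G}_\ast$ (using $v_0\neq v_2$ from distinctness), and then inductively $(v_0,v_\ell)\in\mathcal{G}_\ast$ for all $\ell=2,\dots,m$. Taking $\ell=m$ gives $(i,k)\in\mathcal{G}_\ast$, the desired contradiction.

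The main technical obstacle is ensuring that the iterative shortcut stays consistent with the ``$i\neq k$'' side condition in Definition~\ref{def7}, which is precisely the reason one restricts to a \emph{simple} directed path (no repeated vertices) from $i$ to $k$; this is the one place where the directed argument differs genuinely from its undirected counterpart. Beyond this, the proof is essentially a direct translation of the closure argument, with ``edges'' replaced by ``arcs'' and ``neighborhoods'' by in- and out-neighborhoods where needed.
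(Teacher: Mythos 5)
Your proof is correct and follows essentially the same route as the paper, which itself only remarks that Lemma~\ref{lem7} is proved ``similarly'' to Lemma~\ref{lem:graph.expansion}: both arguments rest on the observation that the monotone closure $\mathcal{M}^{p}(\mathcal{G})$ stabilizes at some fixed point $\mathcal{G}_{*}$, and that strong connectivity then forces $\mathcal{G}_{*}$ to be simple complete. Your induction along a repetition-free directed path is a slightly more direct way of establishing that last step than the disjoint-neighborhood contradiction the paper uses in the undirected case, and it correctly isolates and handles the $i\neq k$ side condition of Definition~\ref{def7}, which is the only genuinely new point in the directed setting.
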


Consider  $\mathscr{G}_{1}$ as the set of all simple digraphs over the node set $\mathrm{V}$, and $\mathscr{E}_{1}$ as the set of all subsets of $\mathpzc{E}_{1}$. By identifying each arc $(i,j)$ with the matrix $E_{ij}\in\mathpzc{E}_{1} $, we establish a natural $1-1$ correspondence between each element in $\mathscr{G}_{1}$ and each element in $\mathscr{E}_{1}$. We denote such a mapping $\tilde{\ell}$ which maps from $\mathscr{E}_{1}$ to $\mathscr{G}_{1}$. For any $\mathpzc{S}_{1}\subseteq \mathpzc{E}_{1}$, we define $g(\mathpzc{S}_1)= \mathpzc{S}_1 \mcup \big\{[E_{ij},E_{jk}]=E_{ik}: E_{ij},E_{jk} \in\mathpzc{S}_1\  \textrm{and} \  i\neq k  \big\}$. Lemma \ref{lem6} and Definition \ref{def7} now lead to the following results.

\begin{lemma}\label{lem8}
	Given any $\mathpzc{S}_{1}\subseteq \mathpzc{E}_{1}, \mathpzc{S}_2 \subseteq \mathpzc{E}_{2}$, and $\mathpzc{S}_{3}\subseteq \mathpzc{E}_{3}$, we have the following conclusions:\\
	$(i)$  $\tilde{\ell}\big( g(\mathpzc{S}_1) \big) = \mathcal{M} \big(\tilde{\ell}(\mathpzc{S}_1)\big)$;\\
	$(ii)$
	For any $E_{ij}\in\mathpzc{E}_{1}$, if $E_{ij}\notin\mathpzc{S}_1$, then $E_{ij}$ can be generated by iterated Lie brackets of elements in  $\mathpzc{S}_1 \cup \mathpzc{S}_2\cup\mathpzc{S}_3$ if and only if $(i,j)$ is an arc of $\mathcal{M}^{z} \big(\tilde{\ell}(\mathpzc{S}_1)\big)$ for some integer $z$;\\
	$(iii)$
	For any $E_{ii}\in\mathpzc{E}_{2}$, if $E_{ii}\notin\mathpzc{S}_2$, then $E_{ii}$ can not be generated by iterated Lie brackets of elements in  $\mathpzc{S}_1 \cup \mathpzc{S}_2\cup\mathpzc{S}_3$.
\end{lemma}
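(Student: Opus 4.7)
My plan is to prove Lemma~\ref{lem8} by exploiting the root-space decomposition of $\mathfrak{gl}(n)$ with respect to the diagonal Cartan subalgebra $\mathfrak{h} = \mathrm{span}(\mathpzc{E}_2)$, namely $\mathfrak{gl}(n) = \mathfrak{h} \oplus \bigoplus_{i \neq j} \mathbb{R} E_{ij}$. All generators in $\mathpzc{S}_1 \cup \mathpzc{S}_2 \cup \mathpzc{S}_3$ are homogeneous with respect to this decomposition; part (i) is essentially bookkeeping, while parts (ii) and (iii) both follow from tracking which homogeneous pieces can appear in iterated brackets.

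For part (i), I would directly apply Lemma~\ref{lem6}: for $E_{ij}, E_{kl} \in \mathpzc{E}_1$ one has $[E_{ij}, E_{kl}] = \delta_{jk} E_{il} - \delta_{li} E_{kj}$, which lies in $\mathpzc{E}_1$ exactly when it is a nontrivial arc composition, i.e.\ $j = k$ with $i \neq l$ (giving $E_{il}$) or the symmetric case $i = l$ with $j \neq k$ (giving $-E_{kj}$). This matches precisely the edge-creation rule for $\mathcal{M}$ in Definition~\ref{def7}, yielding $\tilde{\ell}(g(\mathpzc{S}_1)) = \mathcal{M}(\tilde{\ell}(\mathpzc{S}_1))$.

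For part (ii), I would first establish that the generated Lie subalgebra $L = (\mathpzc{S}_1 \cup \mathpzc{S}_2 \cup \mathpzc{S}_3)_{\rm LA}$ is compatible with the root-space grading, meaning $L = (L \cap \mathfrak{h}) \oplus \bigoplus_{i \neq j} (L \cap \mathbb{R} E_{ij})$. The key check, by induction on bracket depth, is that brackets of homogeneous elements are again homogeneous under the standard rules $[\mathfrak{h}, \mathbb{R} E_{ij}] \subseteq \mathbb{R} E_{ij}$, $[\mathbb{R} E_{ij}, \mathbb{R} E_{jk}] \subseteq \mathbb{R} E_{ik}$ when $i \neq k$, and $[\mathbb{R} E_{ij}, \mathbb{R} E_{ji}] \subseteq \mathfrak{h}$, all read off from Lemma~\ref{lem6}. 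Under this grading, the off-diagonal component $L \cap \mathbb{R} E_{ij}$ grows only through the arc composition from part (i), since brackets against diagonal generators merely rescale existing $\mathpzc{E}_1$ entries. Iterating $g$ (equivalently $\mathcal{M}$) and applying Lemma~\ref{lem7} for stabilization then yields the equivalence in (ii).

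For part (iii), I would leverage the same grading observation: any nontrivial iterated bracket expression lands either in some $\mathbb{R} E_{ij}$ with $i \neq j$, or (via a terminal bracket of the form $[E_{kl}, E_{lk}] = C_{kl}$) in the traceless-diagonal subspace $\mathrm{span}\{C_{kl}: k \neq l\} \subseteq \mathfrak{h}$; in particular, it cannot produce a coordinate projection $E_{ii}$. Hence $E_{ii}$ can appear only as a depth-zero expression, which forces $E_{ii} \in \mathpzc{S}_2$. The main obstacle I anticipate is rigorously propagating the grading claim through arbitrary iterated brackets once linear combinations are involved: one must verify that each homogeneous component of an element of $L$ is itself in $L$, which ultimately rests on the fact that brackets of pairs of homogeneous generators land in a single homogeneous piece, so every generating iterated bracket expression is itself homogeneous before summing.
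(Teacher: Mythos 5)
Your proposal is correct, and it follows essentially the route the paper intends: the paper states Lemma~\ref{lem8} without a written proof, asserting only that it "follows from Lemma~\ref{lem6} and Definition~\ref{def7}," and your root-space/homogeneity argument is precisely the systematic fleshing-out of that direct computation (diagonal generators only rescale off-diagonal root vectors, head-to-tail compositions realize $\mathcal{M}$, and every bracket monomial is homogeneous with brackets landing in the traceless part of $\mathfrak{h}$, which disposes of part (iii)). Your closing caveat about homogeneous components is handled exactly as you suggest, noting that "generated by iterated Lie brackets" is to be read as a single bracket monomial rather than membership in the spanned Lie algebra, consistent with how the lemma is invoked in the proofs of Propositions~\ref{thm3} and~\ref{thm5}.
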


\begin{lemma}\label{lem9}
	For any subset $\mathpzc{S}$ of $\mathpzc{E}_{1}$, if the simple digraph $\tilde{\ell}(\mathpzc{S})$ is strongly connected, then $\mathpzc{S}_{\rm LA}=\mathfrak{sl}(n).$
\end{lemma}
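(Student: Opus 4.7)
The plan is to combine Lemmas \ref{lem6}, \ref{lem7}, and \ref{lem8} to first generate every element of $\mathpzc{E}_1$ from $\mathpzc{S}$ via iterated Lie brackets, and then produce enough diagonal elements to span the remaining directions of $\mathfrak{sl}(n)$.

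First, I would verify the easy inclusion $\mathpzc{S}_{\rm LA}\subseteq\mathfrak{sl}(n)$: every $E_{ij}\in\mathpzc{S}\subseteq\mathpzc{E}_1$ is traceless, and $\mathfrak{sl}(n)$ is itself a Lie subalgebra of $\mathfrak{gl}(n)$, so the generated Lie algebra stays inside $\mathfrak{sl}(n)$. Thus the content of the lemma is the reverse inclusion.

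Next I would establish that $\mathpzc{E}_1\subseteq\mathpzc{S}_{\rm LA}$. Since $\tilde{\ell}(\mathpzc{S})$ is strongly connected, Lemma \ref{lem7} gives an integer $z$ such that $\mathcal{M}^{z}\bigl(\tilde{\ell}(\mathpzc{S})\bigr)$ is simple complete, so for every pair $(i,j)$ with $i\neq j$ the arc $(i,j)$ appears in $\mathcal{M}^{z}\bigl(\tilde{\ell}(\mathpzc{S})\bigr)$. Applying Lemma \ref{lem8}(ii), every such $E_{ij}$ not already in $\mathpzc{S}$ can be generated by iterated Lie brackets within $\mathpzc{S}$ (using the bracket rule $[E_{ij},E_{jk}]=E_{ik}$ from Lemma \ref{lem6} as the algebraic counterpart of the graphical closure), which places all of $\mathpzc{E}_1$ inside $\mathpzc{S}_{\rm LA}$.

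Finally I would produce the diagonal part. Since $\mathpzc{E}_1\cup\mathpzc{E}_3$ contains a basis of $\mathfrak{sl}(n)$ and we already have all of $\mathpzc{E}_1$, it suffices to generate all $C_{ij}=E_{ii}-E_{jj}$ with $i\neq j$. For any such pair, Lemma \ref{lem6} yields
\[
[E_{ij},E_{ji}]=\delta_{jj}E_{ii}-\delta_{ii}E_{jj}=E_{ii}-E_{jj}=C_{ij},
\]
and both $E_{ij},E_{ji}$ lie in $\mathpzc{S}_{\rm LA}$ by the previous step, so every $C_{ij}\in\mathpzc{S}_{\rm LA}$. Combining this with $\mathpzc{E}_1\subseteq\mathpzc{S}_{\rm LA}$ and the upper bound $\mathpzc{S}_{\rm LA}\subseteq\mathfrak{sl}(n)$ gives $\mathpzc{S}_{\rm LA}=\mathfrak{sl}(n)$.

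There is no real obstacle in this argument: the heavy lifting is done by Lemmas \ref{lem7} and \ref{lem8}, which translate strong connectivity into the availability of all off-diagonal $E_{ij}$, after which producing the $C_{ij}$ is a one-line bracket computation. The only minor care needed is to note that the $z$ from Lemma \ref{lem7} is uniform across pairs and that the induced brackets of Lemma \ref{lem6} never leave the ambient $\mathfrak{sl}(n)$, both of which are immediate.
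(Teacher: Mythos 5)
Your proposal is correct and follows essentially the same route as the paper's own proof: use Lemmas \ref{lem7} and \ref{lem8} to translate strong connectivity into $\mathpzc{E}_1\subseteq\mathpzc{S}_{\rm LA}$, then obtain the traceless diagonal elements via $[E_{ij},E_{ji}]=E_{ii}-E_{jj}$, and close with the trivial inclusion $\mathpzc{S}_{\rm LA}\subseteq\mathfrak{sl}(n)$. No gaps.
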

\begin{proof}
	Since $\tilde{\ell}(\mathpzc{S})$ is strongly connected, from Lemma~\ref{lem7} and Lemma~\ref{lem8}, we can conclude that $g^z(\mathpzc{S}) =\mathpzc{E}_{1}$ for some integer $z$. This implies that the basis elements of $\mathfrak{sl}(n)$ in $\mathpzc{E}_{1}$ can be generated by iterated Lie brackets of elements in  $\mathpzc{S}$, i.e.,
	$$\mathpzc{E}_{1}\subset\mathpzc{S}_{\rm LA}.$$
	In addition, $E_{ii}-E_{jj}\in\mathpzc{S}_{\rm LA}$ for $1\leq i\neq j\leq n$, since $E_{ij},E_{ji}\in\mathpzc{E}_{1}$ and $[E_{ij},E_{ji}]=E_{ii}-E_{jj}$.
	Therefore, all the basis elements of $\mathfrak{sl}(n)$ can be generated by iterated Lie brackets of elements in $\mathpzc{S}$, i.e., $\mathpzc{S}_{\rm LA}\supset\mathfrak{sl}(n)$, and, together with the fact that
	$\mathpzc{S}_{\rm LA}\subset\mathfrak{sl}(n)$, $\mathpzc{S}_{\rm LA}=\mathfrak{sl}(n)$ holds.
\end{proof}
We are now in a position to present the detailed proof of Proposition \ref{thm3}. Since $A=0$, Theorem \ref{basic} shows that the system (\ref{eq:model2}) is controllable on ${\rm SL}(n)$ if and only if $\{E_{i_1j_1},\dots,E_{i_{m_1}j_{m_1}},C_{i_{m_1+1}j_{m_1+1}}\dots,C_{i_{m_2}j_{m_2}}\}_{\rm LA}=\mathfrak{sl}(n).$ Therefore, the result is equivalent to showing that $\{E_{i_1j_1},\dots,E_{i_{m_1}j_{m_1}},C_{i_{m_1+1}j_{m_1+1}}\dots,C_{i_{m_2}j_{m_2}}\}_{\rm LA}=\mathfrak{sl}(n)$ if and only if $\mathcal{G}_{\rm contr}$ is strongly connected.

(Sufficiency) Suppose the simple digraph $\mathcal{G}_{\rm contr}$ associated with the bilinear system (\ref{eq:model2}) is strongly connected. It is easily seen that $\mathcal{G}_{\rm contr}=\tilde{\ell}\big(\{E_{i_1j_1},\dots,E_{i_{m_1}j_{m_1}}\}\big)$.
From Lemma~\ref{lem9}, we can conclude that $\{E_{i_1j_1},\dots,E_{i_{m_1}j_{m_1}}\}_{\rm LA}=\mathfrak{sl}(n)$.
It follows that
$$\{E_{i_1j_1},\dots,E_{i_{m_1}j_{m_1}},C_{i_{m_1+1}j_{m_1+1}}\dots,C_{i_{m_2}j_{m_2}}\}_{\rm LA}=\mathfrak{sl}(n).$$

(Necessity) Note that the nonzero matrices that can be generated by iterated Lie brackets of elements in $\mathpzc{E}_{1}\mcup\mathpzc{E}_{3}$ are either $E_{ij}$ or $E_{ii}-E_{jj}$, $i\neq j$. Moreover, there are at most $n-1$ matrices of the form $E_{ii}-E_{jj}$ that are linearly independent. Suppose the simple digraph $\mathcal{G}_{\rm contr}$ is not strongly connected. Then, for any integer $z$, there exists an arc $(i,j)\notin \mathcal{M}^{z}(\mathcal{G}_{\rm contr})$. By Lemma \ref{lem8}, we conclude that the basis elements $E_{ij}\in\mathpzc{E}_{1}$ cannot be generated by iterated Lie brackets of elements in  $\{E_{i_1j_1},\dots,E_{i_{m_1}j_{m_1}},C_{i_{m_1+1}j_{m_1+1}}\dots,C_{i_{m_2}j_{m_2}}\}$. This suggests that the dimension of $\{E_{i_1j_1},\dots,E_{i_{m_1}j_{m_1}},C_{i_{m_1+1}j_{m_1+1}}\dots,C_{i_{m_2}j_{m_2}}\}_{\rm LA}$ is at most $n^{2}-2$, which is smaller than the dimension of $\mathfrak{sl}(n)$.

Hence, $\{E_{i_1j_1},\dots,E_{i_{m_1}j_{m_1}},C_{i_{m_1+1}j_{m_1+1}}\dots,C_{i_{m_2}j_{m_2}}\}_{\rm LA}\neq\mathfrak{sl}(n)$, and consequently, we deduce that if $\{E_{i_1j_1},\dots,E_{i_{m_1}j_{m_1}},C_{i_{m_1+1}j_{m_1+1}}\dots,C_{i_{m_2}j_{m_2}}\}_{\rm LA}=\mathfrak{sl}(n)$, $\mathcal{G}_{\rm contr}$ is strongly connected.

\subsection*{D. Proof of Theorem \ref{thm4}}
Since $A\in\mathfrak{sl}(n)$ for system \eqref{eq:model2}, we can represent it in the form of
$$
A=\sum_{k=1}^{l_{1}} a_{k} E_{{i}^\ast_k{j}^\ast_k}+\sum_{k=l_{1}+1}^{l_{2}} a_{k}( E_{{i}^\ast_k{i}^\ast_k}-E_{{j}^\ast_k{j}^\ast_k})
$$
where $a_k\neq 0\in\mathbb{R}$, $i^\ast_k,j^\ast_k\in\mathrm{V}$, and $E_{{i}^\ast_k{j}^\ast_k}\in\mathpzc{E}_{1}, E_{{i}^\ast_k{i}^\ast_k}-E_{{j}^\ast_k{j}^\ast_k}\in\mathpzc{E}_{3}$. This gives $\mathcal{E}_{\rm drift}=\big\{(i^\ast_1,j^\ast_1),\dots,(i^\ast_{l_1},j^\ast_{l_1})\big\}$ by Definition \ref{def3}.

\begin{definition}\label{def8}
	Let $\mathcal{G}=(\mathrm{V},\mathcal{E})$ be a simple digraph. Given an ordered pair of nodes
	$\langle i,j\rangle$ with $i, j\in\mathrm{V}$, the digraph
	$\mathcal{H}_{ij}(\mathcal{G}) := (\mathrm{V},\mathcal{E}_{ij})$ is called the circumjacent closure at
	$\langle i,j\rangle$ of $\mathcal{G}$ with
	$
	\mathcal{E}_{ij}=\mathcal{E}_{ij}^{1}\mcup\mathcal{E}_{ij}^{2}$, where
	\begin{align*}
		\mathcal{E}_{ij}^{1}= \big\{(i,k)\,:\,\exists k\neq i \ \text{s.t. }\ (j,k)\in \mathcal{E}\big\},
		\mathcal{E}_{ij}^{2}= \big\{(k,j)\,:\,\exists k\neq j \ \text{s.t. }\ (k,i)\in \mathcal{E}\big\}.
	\end{align*}
\end{definition}

\begin{lemma}\label{lem10}
	Let $\mathcal{G}=(\mathrm{V},\mathcal{E})$ be a simple directed graph. Suppose for $i,j\in \mathrm{V}$ we have $(i,j), (j,i)\notin \mathcal{E}$ and $\deg^{+}({i})=k$,\ $\deg^{-}({j})=l$. Then
	$\mathcal{H}_{{i}{j}}(\mathcal{G})= (\mathrm{V},\mathcal{E}_{{i}{j}})$ satisfies
	
	$(i)$ $|\mathcal{E}_{{i}{j}}|=k+l$;
	
	$(ii)$ $\deg^{-}({i})=l$,\ $\deg^{+}({j})=k$;
	
	$(iii)$ all nodes have zero degree except for ${i}$, ${j}$, ${i}$'s in-neighbors and ${j}$'s out-neighbors.
\end{lemma}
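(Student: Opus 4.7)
The plan is to mirror the proof of Lemma~\ref{lem3} in the directed setting, directly unpacking Definition~\ref{def8}. I would begin by labeling the relevant neighborhoods explicitly: let $i_1,\dots,i_k$ denote the $k$ in-neighbors of $i$ (available since $\deg^{+}(i)=k$) and let $j_1,\dots,j_l$ denote the $l$ out-neighbors of $j$ (available since $\deg^{-}(j)=l$). The hypothesis $(i,j),(j,i)\notin\mathcal{E}$ guarantees $j\notin\{i_1,\dots,i_k\}$ and $i\notin\{j_1,\dots,j_l\}$, so the exclusions $k\neq i$ and $k\neq j$ in Definition~\ref{def8} impose no restriction here.

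Applying Definition~\ref{def8} then yields
\[
\mathcal{E}_{ij}^{1}=\bigl\{(i,j_1),\dots,(i,j_l)\bigr\},\qquad \mathcal{E}_{ij}^{2}=\bigl\{(i_1,j),\dots,(i_k,j)\bigr\},
\]
so $|\mathcal{E}_{ij}^{1}|=l$ and $|\mathcal{E}_{ij}^{2}|=k$. Conclusions (ii) and (iii) follow at once: every arc of $\mathcal{E}_{ij}^{1}$ has tail $i$ and contributes to $\deg^{-}(i)$, every arc of $\mathcal{E}_{ij}^{2}$ has head $j$ and contributes to $\deg^{+}(j)$, and the only other endpoints involved are the $j_s$ (out-neighbors of $j$) and the $i_r$ (in-neighbors of $i$); all remaining nodes are incident to no arc of $\mathcal{E}_{ij}$.

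The one step that needs care is the cardinality count in (i), namely $|\mathcal{E}_{ij}|=k+l$. This amounts to showing $\mathcal{E}_{ij}^{1}\cap\mathcal{E}_{ij}^{2}=\emptyset$. Any common element would have to be of the form $(i,j)$, which would force either $(j,j)\in\mathcal{E}$ (to place it in $\mathcal{E}_{ij}^{1}$) or $(i,i)\in\mathcal{E}$ (to place it in $\mathcal{E}_{ij}^{2}$); both are excluded by the standing assumption that $\mathcal{G}$ is simple. Hence the two sets are disjoint and the cardinalities add. This disjointness check is the only subtle point; everything else is a direct transcription of the definition, and I do not expect any further obstacle.
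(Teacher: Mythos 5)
Your proof is correct and follows essentially the same route as the paper's: both unpack Definition~\ref{def8} to write $\mathcal{E}_{ij}$ explicitly as $\{(i_1,j),\dots,(i_k,j)\}\mcup\{(i,j_1),\dots,(i,j_l)\}$ and read off (i)--(iii). Your explicit disjointness check for the cardinality count (ruling out the common arc $(i,j)$ via the no-self-loop assumption) is a detail the paper leaves as ``verified straightforwardly,'' but it is the same argument.
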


\begin{proof}
	Let $i_{1},\dots,i_{k}$ and $j_{1},\dots,j_{l}$ be the nodes in $\mathrm{V}$ that are $i$'s in-neighbors and $j$'s out-neighbors, respectively. We can see directly by Definition \ref{def8}, that
	$$
	\mathcal{E}_{{i}{j}}=\big\{(i_{1},j),\dots,(i_{k},j)\big\}\mcup\big\{(i,j_{1}),\dots,(i,j_{l})\big\}.
	$$
	The three statements (i), (ii), and (iii) can then be verified straightforwardly.
\end{proof}

\subsubsection*{D.1 Proof of Sufficiency for Theorem \ref{thm4}}
Let the union graph $\mathcal{G}_{\rm drift} \mcup \mathcal{G}_{\rm contr}:=(\mathrm{V},\mathcal{E}_{\rm drift} \mcup \mathcal{E}_{\rm contr})$ be strongly connected. We proceed to prove the accessibility of system \eqref{eq:model2}. If digraph $\mathcal{G}_{\rm contr}$ is strongly connected, then  by Theorem \ref{basic} and Proposition \ref{thm3} the system \eqref{eq:model2} is accessible. Now we assume digraph $\mathcal{G}_{\rm contr}$ is the union of $m$ weakly connected components with $ m\geq2$.
Let $\mathcal{G}_{\rm contr}^{i}=(\mathrm{V}_{i},\mathcal{E}_{\rm contr}^{i})$ denote the $i$-th weakly connected component of  $\mathcal{G}_{\rm contr}$ for $i=1,\dots,m$. According to Lemma~\ref{lem7}, there exists an integer $z_{i}$ such that $\mathcal{M}^{z_{i}}(\mathcal{G}_{\rm contr}^{i})$ is a simple complete digraph for each $i$. Therefore $\mathcal{M}^{z^{*}}(\mathcal{G}_{\rm contr})=\mcup_{i=1}^{m}\mathcal{M}^{z_{i}}(\mathcal{G}_{\rm contr}^{i})$, where  $z^{*}=\max_i\{z_{i}\}$. For simplicity, we denote
$$
\mathcal{G}^{*}=(\mathrm{V},\mathcal{E}^{*})=\mathcal{M}^{z^{*}}(\mathcal{G}_{\rm contr})
$$
with $\mathcal{E}^{*}=\mcup_{i=1}^{m}\{(u,v): u, v\in\mathrm{V}_{i}, u\neq v\}$.
Recall that $\mathcal{G}_{\rm contr}=\tilde{\ell}\big( \{E_{i_1j_1},\dots,E_{i_{m_1}j_{m_1}}\}\big)$.
Lemma \ref{lem8} shows that the elements in $\{E_{ij}:(i,j)\in\mathcal{E}^{*}\}$ can be generated by iterated Lie brackets of elements in $\{E_{i_1j_1},\dots,E_{i_{m_1}j_{m_1}}\}$.

Define the digraph $\mathcal{G}_{\rm valid}$ by $\mathcal{G}_{\rm valid}:= (\mathrm{V},\mathcal{E}_{\rm valid})$, where $\mathcal{E}_{\rm valid}:=\mathcal{E}_{\rm drift}\setminus \mathcal{E}^{*}$.
As the union graph $\mathcal{G}_{\rm drift} \mcup \mathcal{G}_{\rm contr}$ is strongly connected while $\mathcal{G}_{\rm contr}$ is not, we always have $|\mathcal{E}_{\rm valid}|\geq 2$.
Let
$$\widetilde{A}=\sum_{\{{i}^\ast_k,{j}^\ast_k\}\in\mathcal{E}_{\rm valid}}a_{k}E_{{i}^\ast_k{j}^\ast_k}.$$
To prove the accessibility of system \eqref{eq:model2}, we need to consider $\{A,E_{i_1j_1},\dots,E_{i_{m_1}j_{m_1}},C_{i_{m_1+1}j_{m_1+1}}\dots,C_{i_{m_2}j_{m_2}}\}_{\rm LA}$.

\begin{lemma}\label{lem11}
	The Lie bracket of $\widetilde{A}$ and $E_{ij}\in\{E_{i_1j_1},\dots,E_{i_{m_1}j_{m_1}}\}_{\rm LA}$ satisfies
	\begin{align*}
		[\widetilde{A}, E_{ij}]\in\{A,E_{i_1j_1},\dots,E_{i_{m_1}j_{m_1}},C_{i_{m_1+1}j_{m_1+1}}\dots,C_{i_{m_2}j_{m_2}}\}_{\rm LA}.
	\end{align*}
\end{lemma}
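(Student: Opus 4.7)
The plan is to exploit the bilinearity of the Lie bracket. Denote the target Lie algebra by $\mathcal{L} := \{A, E_{i_1 j_1}, \dots, E_{i_{m_1} j_{m_1}}, C_{i_{m_1+1} j_{m_1+1}}, \dots, C_{i_{m_2} j_{m_2}}\}_{\rm LA}$. The key identity I would use is
$$[\widetilde{A}, E_{ij}] = [A, E_{ij}] - [A - \widetilde{A}, E_{ij}],$$
and I would argue that each term on the right-hand side lies in $\mathcal{L}$. The first term is immediate: $A \in \mathcal{L}$ by construction, and $E_{ij} \in \{E_{i_1 j_1}, \dots, E_{i_{m_1} j_{m_1}}\}_{\rm LA} \subseteq \mathcal{L}$ by hypothesis, so $[A, E_{ij}] \in \mathcal{L}$. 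The real work is to handle the second term.

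For that, decompose $A - \widetilde{A}$ according to the structure of $A$ as an element of $\mathfrak{sl}(n)$:
$$A - \widetilde{A} = \sum_{(i^*_k, j^*_k) \in \mathcal{E}_{\rm drift} \cap \mathcal{E}^*} a_k E_{i^*_k j^*_k} + \sum_{k=l_1+1}^{l_2} a_k (E_{i^*_k i^*_k} - E_{j^*_k j^*_k}).$$
For the off-diagonal part, each arc $(i^*_k, j^*_k)$ lies in $\mathcal{E}^* = \mathcal{M}^{z^*}(\mathcal{G}_{\rm contr})$, so Lemma 8(ii) yields $E_{i^*_k j^*_k} \in \{E_{i_1 j_1}, \dots, E_{i_{m_1} j_{m_1}}\}_{\rm LA} \subseteq \mathcal{L}$, whence $[E_{i^*_k j^*_k}, E_{ij}] \in \mathcal{L}$. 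For the diagonal-difference part, a direct application of Lemma 6 gives
$$[E_{aa} - E_{bb}, E_{ij}] = (\delta_{ai} - \delta_{aj} - \delta_{bi} + \delta_{bj}) E_{ij},$$
which is a scalar multiple of $E_{ij}$ and therefore already in $\mathcal{L}$. Summing these contributions by bilinearity yields $[A - \widetilde{A}, E_{ij}] \in \mathcal{L}$, and the lemma follows.

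The main subtle point is the justification of the off-diagonal case, which rests on the hypothesis that each weakly connected component of $\mathcal{G}_{\rm contr}$ is strongly connected (condition (i) of Theorem \ref{thm4}). This, combined with Lemma \ref{lem7}, is what guarantees that $\mathcal{M}^{z_i}$ applied to each component produces the simple complete digraph on that component, so that $\mathcal{E}^*$ contains every intra-component arc and Lemma \ref{lem8}(ii) applies uniformly. The remaining calculations are pure bookkeeping via Lemma \ref{lem6} and need no further structural input.
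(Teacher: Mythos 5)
Your proof is correct and is precisely the routine verification the paper has in mind when it omits the argument as ``straightforward from the basic properties of Lie algebras'': bilinearity of the bracket applied to $\widetilde{A}=A-(A-\widetilde{A})$, with the arcs of $\mathcal{E}_{\rm drift}\cap\mathcal{E}^{*}$ handled by Lemma~\ref{lem8}(ii) and the $\mathpzc{E}_{3}$ part of $A$ contributing only scalar multiples of $E_{ij}$. In particular you correctly avoid the trap of claiming the diagonal part of $A$ itself lies in the generated Lie algebra, using only that its bracket with $E_{ij}$ does.
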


This lemma is straightforward to establish from the basic properties of Lie algebras, so the detailed proof is omitted. Now we take four steps to complete the proof.

\noindent{\it Step 1.} We first establish the relationship between the Lie bracket $[\widetilde{A},E_{ij}]$ for $ E_{ij}\in \{E_{i_1j_1},\dots,E_{i_{m_1}j_{m_1}}\}_{\rm LA}$ and the circumjacent closure at $\langle i,j\rangle$ of $\mathcal{G}_{\rm valid}$. For any $B\in\mathfrak{sl}(n)$, $B$ can be represented in the form of $\sum_{k=1}^{l_{1}} b_{k} E_{{i}_k{j}_k}+\sum_{k=l_{1}+1}^{l_{2}} b_{k}( E_{{i}_k{i}_k}-E_{{j}_k{j}_k})$.
Define the function $\tilde{\varphi}$ that takes a matrix $B\in\mathfrak{sl}(n)$ to a simple digraph $\mathcal{G}_{B}:=(\mathrm{V},\mathcal{E}_{B})$, where $(i_k,j_k)\in\mathcal{E}_{B}$ if and only if $b_{k}\neq 0$ for $1\leq k\leq l_{1}$. It is clear that $\tilde{\varphi}(\widetilde{A})=\mathcal{G}_{\rm valid}$.
From Lemma \ref{lem6} and Definition \ref{def8}, there holds
\begin{equation}\label{eq5}
	\tilde{\varphi}\big([\widetilde{A},E_{ij}] \big)=\mathcal{H}_{ij}(\mathcal{G}_{\rm valid})
\end{equation}
for $E_{ij}\in \{E_{i_1j_1},\dots,E_{i_{m_1}j_{m_1}}\}_{\rm LA}$ and $(i,j)\in\mathcal{E}^{*}$.

\noindent{\it Step 2.}
Recall that $\mathcal{G}_{\rm contr}^{i}=(\mathrm{V}_{i},\mathcal{E}_{\rm contr}^{i})$ is the $i$-th weakly connected component of $\mathcal{G}_{\rm contr}$ for $i=1,\dots,m, m\geq2$. Let $\mathcal{G}_{\rm contr}^{1}$ be the weakly connected component that contains at least three nodes. In this step, we prove that if $\mathcal{G}_{\rm valid}$ has arcs from the nodes in $\mathrm{V}_{1}$ to the nodes in $\mathrm{V}_{k}, k\in \{2,\dots,m\}$,
then  all elements in set $\{E_{ij}:i\in\mathrm{V}_{1},~ j\in\mathrm{V}_{k}\}$ can be obtained by iterated Lie brackets of elements in $\{\widetilde{A},E_{i_1j_1},\dots,E_{i_{m_1}j_{m_1}}\}$.

For this purpose, we consider the digraph $\mathcal{G}_{\rm valid}$.
Because $\mathcal{G}_{\rm drift} \mcup \mathcal{G}_{\rm contr}$ is strongly connected, $\mathcal{G}_{\rm valid} \mcup \mathcal{G}_{\rm contr}$ is also strongly connected by the definition of $\mathcal{G}_{\rm valid}$. In addition, we can deduce that $\mathcal{G}_{\rm valid}$ satisfies: i) all arcs are between different $\mathrm{V}_{i}$ and no arcs within each $\mathrm{V}_{i}$; ii) each $\mathrm{V}_{i}$ has at least one node with out-degree greater than zero; iii) each $\mathrm{V}_{i}$ has at least one node with in-degree greater than zero.

Let $v_{11}$ denote the node in $\mathrm{V}_{1}$ with $\deg^-(v_{11})=k>0$, and $v_{i_{1}j_{1}},\dots,v_{i_{k}j_{k}}$ denote the out-neighbors of $v_{11}$.
Apparently, $v_{i_{1}j_{1}},\dots,v_{i_{k}j_{k}}\in\mathrm{V}_{2}\mcup\dots\mcup\mathrm{V}_{m}$.
For simplicity of notation we assume $\mathcal{G}_{\rm valid}$ has arcs from the nodes in $\mathrm{V}_{1}$ to the nodes in $\mathrm{V}_{2}$.
Set $v_{i_{1}j_{1}},\dots,v_{i_{r}j_{r}}$ be the nodes in $\mathrm{V}_{2}$, with $1\leq r\leq k$.  For the other node $v_{12}\in \mathrm{V}_{1}$, there are two possibilities for its in-degree: (i) $\deg^+(v_{12})=0$; (ii) $\deg^+(v_{12})> 0$.

\begin{itemize}
	\item Proof under Case (i): if $\deg^+(v_{12})=0$, then pick $\langle v_{12},v_{11}\rangle$ and by Lemma \ref {lem10} we have
	\begin{align*}
		\mathcal{H}_{v_{12}v_{11}}(\mathcal{G}_{\rm valid})
		=\big(\mathrm{V},\{(v_{12},v_{i_{1}j_{1}}),(v_{12},v_{i_{2}j_{2}}),\dots,(v_{12},v_{i_{k}j_{k}})\}\big).
	\end{align*}
	When $r\geq2$; i.e., $v_{i_{1}j_{1}},v_{i_{2}j_{2}} \in \mathrm{V}_{2}$, by selecting the node pair $\langle v_{i_{1}j_{1}},v_{i_{2}j_{2}}\rangle$ we can obtain
	\begin{equation}\label{eq6}
		\mathcal{H}_{v_{i_{1}j_{1}}v_{i_{2}j_{2}}}\big(\mathcal{H}_{v_{12}v_{11}}(\mathcal{G}_{\rm valid})\big)= \big(\mathrm{V},\{(v_{12},v_{i_{2}j_{2}})\}\big).
	\end{equation}
	Since $(v_{12},v_{11})$ and $(v_{i_{1}j_{1}},v_{i_{2}j_{2}})$ are in $\mathcal{E}^{*}$, i.e., $E_{v_{12}v_{11}},E_{v_{i_{1}j_{1}}v_{i_{2}j_{2}}}\in\{E_{i_1j_1},\dots,E_{i_{m_1}j_{m_1}}\}_{\rm LA}$, from (\ref{eq5}) and (\ref{eq6}) we conclude that
	$$
	\tilde{\varphi}\big([[\widetilde{A},E_{v_{12}v_{11}}],E_{v_{i_{1}j_{1}}v_{i_{2}j_{2}}}]\big)
	=\big(\mathrm{V},\{(v_{12},v_{i_{2}j_{2}})\}\big). $$
	This implies that
	$$[[\widetilde{A},E_{v_{12}v_{11}}],E_{v_{i_{1}j_{1}}v_{i_{2}j_{2}}}]= a^{*}E_{v_{12}v_{i_{2}j_{2}}},$$
	where $a^{*}$ is the coefficient generated during the operation of the Lie brackets.
	
	Therefore,
	$E_{v_{12}v_{i_{2}j_{2}}}\in\mathpzc{E}_{1}$ can be obtained by iterated Lie brackets of elements in $\{\widetilde{A},E_{i_1j_1},\dots,E_{i_{m_1}j_{m_1}}\}$.
	This, together with the strong connectivity of $\mathcal{G}_{\rm contr}^{1}$ and $\mathcal{G}_{\rm contr}^{2}$, implies that all elements in set  $\{E_{ij}:i\in\mathrm{V}_{1}, ~j\in\mathrm{V}_{2}\}$ can be obtained by iterated Lie brackets of elements in $\{\widetilde{A},E_{i_1j_1},\dots,E_{i_{m_1}j_{m_1}}\}$.
	
	Note that the same conclusion can be drawn for $r=1$, with $v_{i_{2}j_{2}}$ replaced by another node in $\mathrm{V}_{2}$ that is not equal to $v_{i_{1}j_{1}}$ since $|\mathrm{V}_{2}|\geq 2$.
	
	\item  Proof of Case (ii): if $\deg^+(v_{12})=l> 0$, pick $\langle v_{12},v_{11}\rangle$ and let $v_{i'_{1}j'_{1}},\dots,v_{i'_{l}j'_{l}}\in\mcup_{j=2}^{m}\mathrm{V}_{j}$ denote the in-neighbors of $v_{12}$. Then we obtain
	$$
	\mathcal{H}_{v_{12}v_{11}}(\mathcal{G}_{\rm valid})= (\mathrm{V},\mathcal{E}_{v_{12}v_{11}}),
	$$
	where
	\begin{align*}
		\mathcal{E}_{v_{12}v_{11}}=\{(v_{12},v_{i_{1}j_{1}}),(v_{12},v_{i_{2}j_{2}}),\dots,(v_{12},v_{i_{k}j_{k}})\}
		\mcup\{(v_{i'_{1}j'_{1}},v_{11}),(v_{i'_{2}j'_{2}},v_{11}),\dots,(v_{i'_{l}j'_{l}},v_{11})\}.
	\end{align*}
	It follows immediately that the digraph $\mathcal{H}_{v_{12}v_{11}}(\mathcal{G}_{\rm valid})$ satisfies:
	i) $\deg^-(v_{12})=k>0$; ii) there is a node
	$v_{13}\in \mathrm{V}_{1}$ with $\deg^+(v_{13})=0$ since $|\mathrm{V}_{1}|\geq3$.
	Then $E_{v_{13}v_{i_{2}j_{2}}}$ can be obtained by iterated Lie brackets of elements in $\{\widetilde{A},E_{i_1j_1},\dots,E_{i_{m_1}j_{m_1}}\}$
	by applying the same method as in Case (i) to the digraph $\mathcal{H}_{v_{12}v_{11}}(\mathcal{G}_{\rm valid})$.
	Hence, all elements in set  $\{E_{ij}:i\in\mathrm{V}_{1},~ j\in\mathrm{V}_{2}\}$ can be obtained by iterated Lie brackets of elements in $\{\widetilde{A},E_{i_1j_1},\dots,E_{i_{m_1}j_{m_1}}\}$ .
\end{itemize}

\noindent{\it Step 3.} In this step, we proceed to show that 
all elements in set  $\{E_{ij}:i\in\mathrm{V}_{1},~j\in\mathrm{V}_{2}\mcup\dots\mcup\mathrm{V}_{m}\}\mcup\{E_{ij}:i\in\mathrm{V}_{2}\mcup\dots\mcup\mathrm{V}_{m},~ j\in\mathrm{V}_{1}\}$ can be obtained by iterated Lie brackets of elements in $\{\widetilde{A},E_{i_1j_1},\dots,E_{i_{m_1}j_{m_1}}\}$.

We first show that all elements in set  $\{E_{ij}:i\in\mathrm{V}_{1},~j\in\mathrm{V}_{2}\mcup\dots\mcup\mathrm{V}_{m}\}$ can be obtained by iterated Lie brackets of elements in $\{\widetilde{A},E_{i_1j_1},\dots,E_{i_{m_1}j_{m_1}}\}$.
Recall that $\mathcal{G}_{\rm valid} \mcup \mathcal{G}_{\rm contr}$ is strongly connected. Graph $\mathcal{G}_{\rm valid}$ has a directed circle direction $\mathrm{V}_{j_{1}}=\mathrm{V}_{1},\mathrm{V}_{j_{2}},\dots,\mathrm{V}_{j_{k}}=\mathrm{V}_{1}$, such that there exist at least one arc from the node in $\mathrm{V}_{j_{t}}$ to the node in $\mathrm{V}_{j_{t+1}}$ for all $t=1,2,\dots,k-1$ and $\{1,2,\dots,m\}\subset\{j_{1},j_{2},\dots,j_{k}\}$.
To simplify notation, we let the directed circle direction be $\mathrm{V}_{1},\mathrm{V}_{2},\mathrm{V}_{3},\mathrm{V}_{4},\dots,\mathrm{V}_{1}$.

Since $\mathcal{G}_{\rm valid}$ has arcs from the nodes in $\mathrm{V}_{1}$ to the nodes in $\mathrm{V}_{2}$, from step 2 we know that all elements in set  $\{E_{ij}:i\in\mathrm{V}_{1},~ j\in\mathrm{V}_{2}\}$ can be obtained by iterated Lie brackets of elements in $\{\widetilde{A},E_{i_1j_1},\dots,E_{i_{m_1}j_{m_1}}\}$ .
Next, along the directed circle direction we consider $\mathrm{V}_{3}$. If there exist arcs from the nodes in $\mathrm{V}_{1}$ to the nodes in $\mathrm{V}_{3}$, obviously all elements in set  $\{E_{ij}:i\in\mathrm{V}_{1},~ j\in\mathrm{V}_{3}\}$ can be obtained by iterated Lie brackets of elements in $\{\widetilde{A},E_{i_1j_1},\dots,E_{i_{m_1}j_{m_1}}\}$, and we next consider $\mathrm{V}_{4}$ along the directed circle direction.
Otherwise, we can use $\mathrm{V}_{2}$ as a bridge to get the same conclusion since there are arcs from the nodes in $\mathrm{V}_{2}$ to the nodes in $\mathrm{V}_{3}$.

Let $v_{21}$ denote the node in $\mathrm{V}_{2}$ with $\deg^-(v_{21})=k>0$, and $v_{i_{1}j_{1}},\dots,v_{i_{k}j_{k}}$ denote the out-neighbors of $v_{21}$.
Apparently, $v_{i_{1}j_{1}},\dots,v_{i_{k}j_{k}}\in\mcup_{j=1,j\neq2}^{m}\mathrm{V}_{j}$. For convenience we let $v_{i_{1}j_{1}},\dots,v_{i_{r}j_{r}}$ be the nodes in $\mathrm{V}_{3}$, with $1\leq r\leq k$. Similarly, for the other node $v_{22}\in \mathrm{V}_{2}$, there are two possibilities for its in-degree: (i) $\deg^+(v_{22})=0$; (ii) $\deg^+(v_{22})>0$.

\begin{itemize}
	\item Proof under Case (i): if $\deg^+(v_{22})=0$, then pick $\langle v_{22},v_{21}\rangle$ and by Lemma \ref {lem10} we have
	\begin{align*}
		\mathcal{H}_{v_{22}v_{21}}(\mathcal{G}_{\rm valid})
		=\big(\mathrm{V},\{(v_{22},v_{i_{1}j_{1}}),(v_{22},v_{i_{2}j_{2}}),\dots,(v_{22},v_{i_{k}j_{k}})\}\big).
	\end{align*}
	When $r\geq2$; i.e., $v_{i_{1}j_{1}},v_{i_{2}j_{2}} \in \mathrm{V}_{3}$, again from Lemma \ref {lem10}, there holds
	\begin{equation*}
		\mathcal{H}_{v_{i_{1}j_{1}}v_{i_{2}j_{2}}}\big(\mathcal{H}_{v_{22}v_{21}}(\mathcal{G}_{\rm valid})\big)= \big(\mathrm{V},\{(v_{22},v_{i_{2}j_{2}})\}\big).
	\end{equation*}
	This implies that
	$E_{v_{22}v_{i_{2}j_{2}}}$ can be obtained by iterated Lie brackets of elements in
	$ \{\widetilde{A},E_{i_1j_1},\dots,E_{i_{m_1}j_{m_1}}\}$.
	Note that the same conclusion can be drawn for $r=1$, with $v_{i_{2}j_{2}}$ replaced by another node in $\mathrm{V}_{3}$ that is not equal to $v_{i_{1}j_{1}}$ since $|\mathrm{V}_{3}|\geq 2$.
	
	Hence, all elements in set  $\{E_{ij}:i\in\mathrm{V}_{2},~ j\in\mathrm{V}_{3}\}$ can be obtained by iterated Lie brackets of elements in $\{\widetilde{A},E_{i_1j_1},\dots,E_{i_{m_1}j_{m_1}}\}$. In addition, We already know that all elements in set  $\{E_{ij}:i\in\mathrm{V}_{1},~ j\in\mathrm{V}_{2}\}$ can be obtained by iterated Lie brackets of elements in $\{\widetilde{A},E_{i_1j_1},\dots,E_{i_{m_1}j_{m_1}}\}$. 
	Consequently, all elements in set  $\{E_{ij}:i\in\mathrm{V}_{1},~ j\in\mathrm{V}_{3}\}$ can be obtained by iterated Lie brackets of elements in $\{\widetilde{A},E_{i_1j_1},\dots,E_{i_{m_1}j_{m_1}}\}$. 
	
	\item Proof of Case (ii): if $\deg^+(v_{22})=l> 0$, pick $\langle v_{22},v_{21}\rangle$ and let $v_{i'_{1}j'_{1}},\dots,v_{i'_{l}j'_{l}}\in\mcup_{j=1,j\neq2}^{m}\mathrm{V}_{j}$ denote the in-neighbors of $v_{22}$. Then we obtain
	$$
	\mathcal{H}_{v_{22}v_{21}}(\mathcal{G}_{\rm valid})= (\mathrm{V},\mathcal{E}_{v_{22}v_{21}}),
	$$
	where
	\begin{align*}
		\mathcal{E}_{v_{22}v_{21}}=\{(v_{22},v_{i_{1}j_{1}}),(v_{22},v_{i_{2}j_{2}}),\dots,(v_{22},v_{i_{k}j_{k}})\}
		\mcup\{(v_{i'_{1}j'_{1}},v_{21}),(v_{i'_{2}j'_{2}},v_{21}),\dots,(v_{i'_{l}j'_{l}},v_{21})\}.
	\end{align*}
	
	When $r\geq2$, i.e., $v_{i_{1}j_{1}},v_{i_{2}j_{2}} \in \mathrm{V}_{3}$, and $v_{i_{2}j_{2}} \in \{v_{i'_{1}j'_{1}},\dots,v_{i'_{l}j'_{l}}\}$.
	There holds
	\begin{align*}
		\mathcal{H}_{v_{i_{1}j_{1}}v_{i_{2}j_{2}}}\big(\mathcal{H}_{v_{22}v_{21}}(\mathcal{G}_{\rm valid})\big)= \big(\mathrm{V},\{(v_{22},v_{i_{2}j_{2}}),(v_{i_{1}j_{1}},v_{21})\}\big).
	\end{align*}
	It follows immediately that the digraph $\mathcal{H}_{v_{i_{1}j_{1}}v_{i_{2}j_{2}}}\big(\mathcal{H}_{v_{22}v_{21}}(\mathcal{G}_{\rm valid})\big)$ satisfies:
	i) $\deg^-(v_{22})=1>0$; ii) there is a node
	$v_{11}\in \mathrm{V}_{1}$ with $\deg^+(v_{11})=0$.
	Therefore,
	\begin{align*}
		\mathcal{H}_{v_{11}v_{22}}\big(\mathcal{H}_{v_{i_{1}j_{1}}v_{i_{2}j_{2}}}\big(\mathcal{H}_{v_{22}v_{21}}(\mathcal{G}_{\rm valid})\big)\big)
		= \big(\mathrm{V},
		\{(v_{11},v_{i_{2}j_{2}})\}\big).
	\end{align*}
	This shows that
	$E_{v_{11}v_{i_{2}j_{2}}}$ can be obtained by iterated Lie brackets of elements in
	$ \{\widetilde{A},E_{i_1j_1},\dots,E_{i_{m_1}j_{m_1}}\}$.
	Note that $v_{11}\in\mathrm{V}_{1}, v_{i_{2}j_{2}}\in \mathrm{V}_{3}$, which is the desired result.
	
	When $r\geq2$, i.e., $v_{i_{1}j_{1}},v_{i_{2}j_{2}} \in \mathrm{V}_{3}$, and $v_{i_{2}j_{2}} \notin \{v_{i'_{1}j'_{1}},\dots,v_{i'_{l}j'_{l}}\}$.
	There holds
	\begin{equation*}
		\mathcal{H}_{v_{i_{1}j_{1}}v_{i_{2}j_{2}}}\big(\mathcal{H}_{v_{22}v_{21}}(\mathcal{G}_{\rm valid})\big)= \big(\mathrm{V},\{(v_{22},v_{i_{2}j_{2}})\}\big).
	\end{equation*}
	According to Case (i), all elements in set  $\{E_{ij}:i\in\mathrm{V}_{1},~ j\in\mathrm{V}_{3}\}$ can be obtained by iterated Lie brackets of elements in $\{\widetilde{A},E_{i_1j_1},\dots,E_{i_{m_1}j_{m_1}}\}$.
	
	Note that the same conclusion can be drawn for $r=1$, with $v_{i_{2}j_{2}}$ replaced by another node in $\mathrm{V}_{3}$ that is not equal to $v_{i_{1}j_{1}}$ since $|\mathrm{V}_{3}|\geq 2$.
\end{itemize}

Repeated application of this method enables us to get all elements in set  $\{E_{ij}:i\in\mathrm{V}_{1},~ j\in\mathrm{V}_{2}\mcup\cdots\mcup\mathrm{V}_{m}\}$ by iterated Lie brackets of elements in $\{\widetilde{A},E_{i_1j_1},\dots,E_{i_{m_1}j_{m_1}}\}$. 
On the other hand, we continue in this fashion to obtain all elements in set  $\{E_{ij}:i\in\mathrm{V}_{2}\mcup\cdots\mcup\mathrm{V}_{m},~ j\in\mathrm{V}_{1}\}$ by considering the node in $\mathrm{V}_{1}$ whose in-degree is greater than zero.

An analysis similar to the above shows that 
all elements in set  $\{E_{ij}:i\in\mathrm{V}_{j_{k-1}},~ j\in\mathrm{V}_{1}\}$ can be obtained by iterated Lie brackets of elements in $\{\widetilde{A},E_{i_1j_1},\dots,E_{i_{m_1}j_{m_1}}\}$.
Then we can use $\mathrm{V}_{j_{k-1}}$ as a bridge to get all elements in set $\{E_{ij}:i\in\mathrm{V}_{j_{k-2}},~ j\in\mathrm{V}_{1}\}$.
And finally we conclude that all elements in set $\{E_{ij}:i\in\mathrm{V}_{2}\mcup\cdots\mcup\mathrm{V}_{m},~ j\in\mathrm{V}_{1}\}$  can be obtained by iterated Lie brackets of elements in $\{\widetilde{A},E_{i_1j_1},\dots,E_{i_{m_1}j_{m_1}}\}$.

\noindent{\it Step 4.} In this step, we prove $\{A,E_{i_1j_1},\dots,E_{i_{m_1}j_{m_1}}$,   
$C_{i_{m_1+1}j_{m_1+1}}\dots,C_{i_{m_2}j_{m_2}}\}_{\rm LA}=\mathfrak{sl}(n)$ and complete the proof.

Let $S$ denote the union of $\{E_{ij}:i\in\mathrm{V}_{1},~ j\in\mathrm{V}_{2}\mcup\cdots\mcup\mathrm{V}_{m}\}$ and $\{E_{ij}:i\in\mathrm{V}_{2}\mcup\cdots\mcup\mathrm{V}_{m},~ j\in\mathrm{V}_{1}\}$. From step 3 we see that all elements in set $S$  can be obtained by iterated Lie brackets of elements in $\{\widetilde{A},E_{i_1j_1},\dots,E_{i_{m_1}j_{m_1}}\}$.
This, together with Lemma \ref{lem11}, implies that $S$ is a subset of $\{A,E_{i_1j_1},\dots,E_{i_{m_1}j_{m_1}}$,   
$C_{i_{m_1+1}j_{m_1+1}}\dots,C_{i_{m_2}j_{m_2}}\}_{\rm LA}$. Consider the connectivity of digroup $\tilde{\ell}\big(S\mcup\{E_{i_1j_1},\dots,E_{i_{m_1}j_{m_1}}\}\big)$. Note that for any $\mathrm{V}_{j}, ~j=2,\dots,m$, there exist $E_{ij}$ and $E_{ji}$ in $S$ with $i\in \mathrm{V}_{1}, j\in\mathrm{V}_{j}$. Hence, all of the nodes in $\mathrm{V}_{1}\mcup\mathrm{V}_{j}$ are on a directed circle. This is due to the fact that each $\mathcal{G}_{\rm contr}^{i}$ is strongly connected. Therefore, digraph $\tilde{\ell}\big(S\mcup\{E_{i_1j_1},\dots,E_{i_{m_1}j_{m_1}}\}\big)$ has a directed circle containing all the nodes in $\mathrm{V}$. This implies that $\tilde{\ell}\big(S\mcup\{E_{i_1j_1},\dots,E_{i_{m_1}j_{m_1}}\}\big)$ is strongly connected. From Lemma \ref{lem9} we have
$$\big\{S\mcup\{E_{i_1j_1},\dots,E_{i_{m_1}j_{m_1}}\}\big\}_{\rm LA}=\mathfrak{sl}(n).$$

Therefore
$$\{A,E_{i_1j_1},\dots,E_{i_{m_1}j_{m_1}},C_{i_{m_1+1}j_{m_1+1}}\dots,C_{i_{m_2}j_{m_2}}\}_{\rm LA}=\mathfrak{sl}(n),$$
and consequently, the system \eqref{eq:model2} is accessible on ${\rm SL}(n)$ by Theorem \ref{basic}.
The proof of sufficiency is now completed.

\subsubsection*{D.2 Proof of Necessity  for Theorem \ref{thm4}}
Suppose $\mathcal{G}_{\rm contr}\mcup\mathcal{G}_{\rm drift}$ is not strongly connected. Then there exist $\mathrm{V}_{i}$ and $\mathrm{V}_{j}$ such that every node in $\mathrm{V}_{j}$ is not reachable from node in $\mathrm{V}_{i}$. Hence, the basis elements of $\mathfrak{sl}(n)$ in  $\{E_{kl}:k\in\mathrm{V}_{i}, ~l\in\mathrm{V}_{j}\}$ can never be obtained by iterated Lie brackets of elements in $ \{\widetilde{A},E_{i_1j_1},\dots,E_{i_{m_1}j_{m_1}}\}$.
This, together with Lemma \ref{lem8} (ii), implies that $\{A,E_{i_1j_1},\dots,E_{i_{m_1}j_{m_1}},C_{i_{m_1+1}j_{m_1+1}}\dots,C_{i_{m_2}j_{m_2}}\}_{\rm LA}$ is a proper subset of $\mathfrak{sl}(n)$, i.e.,
$$\{A,E_{i_1j_1},\dots,E_{i_{m_1}j_{m_1}},C_{i_{m_1+1}j_{m_1+1}}\dots,C_{i_{m_2}j_{m_2}}\}_{\rm LA}\neq\mathfrak{sl}(n).$$
Therefore, the system \eqref{eq:model2} is not accessible, and consequently, if the system \eqref{eq:model2} is accessible, the union graph $\mathcal{G}_{\rm contr}\mcup\mathcal{G}_{\rm drift}$ is strongly connected.
This finishes the proof of this theorem.

\subsection*{E. Proof of Proposition \ref{thm5}}
Since $A=0$, Theorem \ref{basic} shows that the system (\ref{eq:model3}) is controllable on ${\rm GL^{+}}(n)$ if and only if $\{E_{i_1j_1},\dots,$ $E_{i_mj_m}\}_{\rm LA}=\mathfrak{gl}(n).$ Therefore, the result is equivalent to showing that $\{E_{i_1j_1},\dots,E_{i_mj_m}\}_{\rm LA}=\mathfrak{gl}(n)$ if and only if $\mathcal{G}_{\rm contr}^{\ast}$ is a strongly connected digraph with at least one self-loop.

(Sufficiency) Suppose the digraph $\mathcal{G}_{\rm contr}^{\ast}$ associated with the bilinear system (\ref{eq:model3}) is strongly connected and has at least one self-loop.
It follows easily  that the simple digraph corresponding to $\mathcal{G}_{\rm contr}^{\ast}$ is still strongly connected. Lemma \ref{lem9} now shows that all the elements in $\mathpzc{E}_{1}\mcup\mathpzc{E}_{3}$ can be generated by iterated Lie brackets of elements in $\{E_{i_1j_1},\dots,E_{i_mj_m}\}$.

Let $(i,i)$ be a self-loop of $\mathcal{G}_{\rm contr}^{\ast}$, i.e., $E_{ii}\in\{E_{i_1j_1},\dots,E_{i_mj_m}\}$.
Because the Lie algebra is a vector space, we have $(E_{jj}-E_{ii})+E_{ii}\in\{E_{i_1j_1},\dots,E_{i_mj_m}\}_{\rm LA}$, which leads to
$E_{jj}\in\{E_{i_1j_1},\dots,E_{i_mj_m}\}_{\rm LA}$
for any
$1\le j\le n, j\neq i$.
Therefore, all the basis elements in $\mathpzc{E}_{1}\mcup\mathpzc{E}_{2}$ of $\mathfrak{gl}(n)$ are in the Lie algebra generated by  $\{E_{i_1j_1},\dots,E_{i_mj_m}\}$. Consequently,
$$\{E_{i_1j_1},\dots,E_{i_mj_m}\}_{\rm LA}=\mathfrak{gl}(n).$$

(Necessity)
Suppose the digraph $\mathcal{G}_{\rm contr}^{\ast}$ is not strongly connected or has no self-loops.
If $\mathcal{G}_{\rm contr}^{\ast}$ is not strongly connected, then its corresponding simple digraph is not either.
By Lemma \ref{lem7} and Lemma \ref{lem8}, we conclude that there exists at least one basis element $E_{ij}\in\mathpzc{E}_{1}$ that cannot be generated by iterated Lie brackets of elements in  $\{E_{i_1j_1},\dots,E_{i_mj_m}\}$. This implies that the dimension of $\{E_{i_1j_1},\dots,E_{i_mj_m}\}_{\rm LA}$ is at most $n^{2}-1$.

If $\mathcal{G}_{\rm contr}^{\ast}$ does not contain self-loops, Lemma \ref{lem8} shows that for any $i=1,\dots,n$ , $E_{ii}\in\mathpzc{E}_{2}$ will never be in $\{E_{i_1j_1},\dots,E_{i_mj_m}\}_{\rm LA}, i=1,\cdots,n.$ Therefore, using Lemma \ref{lem9}, we deduce that the dimension of $\{E_{i_1j_1},\dots,E_{i_mj_m}\}_{\rm LA}$ is also at most $n^{2}-1$.
In other words, we always have
$$\{E_{i_1j_1},\dots,E_{i_mj_m}\}_{\rm LA}\neq\mathfrak{gl}(n).$$
Therefore, if $\{E_{i_1j_1},\dots,E_{i_mj_m}\}_{\rm LA}=\mathfrak{gl}(n)$, then $\mathcal{G}_{\rm contr}^{\ast}$ is a strongly connected digraph with at least one self-loop.

\subsection*{F. Proof of Theorem \ref{thm6}}

\subsubsection*{F.1 Proof of Sufficiency for Theorem \ref{thm6}}
The sufficiency of Theorem \ref{thm6} can be proved in much the same way as Theorem \ref{thm4}, and now we give the main ideas of the proof.

Let $\mathcal{SG}_{\rm contr}$ and $\mathcal{SG}_{\rm drift}$ stand for the simple digraphs corresponding to $\mathcal{G}_{\rm contr}^{\ast}$ and $\mathcal{G}_{\rm drift}^{\ast}$ by ignoring the self-links, respectively.
Because the union graph $\mathcal{G}_{\rm contr}^{\ast}\mcup\mathcal{G}_{\rm drift}^{\ast}$ is strongly connected, the union graph $\mathcal{SG}_{\rm contr}\mcup\mathcal{SG}_{\rm drift}$ is strongly connected too.
Similarly, $\widetilde{A}$ and $\mathcal{SG}_{\rm vaild}$ can be defined. The remainder of the proof is divided into two steps.

\noindent{\it Step 1.}
We first show that the following formula holds.
\begin{equation}\label{eq8}
	\{A, E_{i_1j_1},\dots,E_{i_mj_m}\}_{\rm LA}\supset\mathfrak{sl}(n).
\end{equation}
Because the weakly connected components of $\mathcal{G}_{\rm contr}^{\ast}$ are strongly connected with at least two nodes, and one of them contains at least three nodes, so does $\mathcal{SG}_{\rm contr}$. According to the proof of Theorem \ref{thm4}, we have
\begin{equation*}
	\{A, E_{i_1j_1},\dots,E_{i_mj_m}\}_{\rm LA}\supset\mathfrak{sl}(n).
\end{equation*}

\noindent{\it Step 2.}
In this step, we proceed to show that $\{A, E_{i_1j_1},\dots,E_{i_mj_m}\}_{\rm LA}=\mathfrak{gl}(n)$ holds under the following two cases: (i) $\mathcal{G}_{\rm contr}^{\ast}$ has a self-loop; (ii)  ${\rm tr}A\neq 0$.
\begin{itemize}
	\item Proof under Case (i): if $\mathcal{G}_{\rm contr}^{\ast}$ has a self-loop, then combining (\ref{eq8}) with the proof of Proposition \ref{thm5}, we conclude that
	$$\{A, E_{i_1j_1},\dots,E_{i_mj_m}\}_{\rm LA}=\mathfrak{gl}(n).$$
	
	\item Proof of Case (ii): if ${\rm tr}A\neq 0$, we can certainly assume that $A=\sum_{k=1}^{l}a_{k}E_{i_{k}i_{k}}+\sum_{k=l+1}^{r}a_{k}E_{i_{k}j_{k}}$.
	From (\ref{eq8}) we have
	$$\sum_{k=1}^{l}a_{k}E_{i_{k}i_{k}}\in\{A, E_{i_1j_1},\dots,E_{i_mj_m}\}_{\rm LA}.$$
	Because
	$E_{ii}-E_{jj}\in\{A, E_{i_1j_1},\dots,E_{i_mj_m}\}_{\rm LA}$, for $1\leq i\neq j \leq n$, we can obtain
	\begin{align*}
		\sum_{k=1}^{l}a_{k}E_{i_{k}i_{k}} & +a_{k}(E_{i_{k-1}i_{k-1}}-E_{i_{k}i_{k}}) \\
		& +(a_{k}+a_{k-1})(E_{i_{k-2}i_{k-2}}-E_{i_{k-1}i_{k-1}})\\
		& +\cdots+\sum_{k=2}^{l}a_{k}(E_{i_{1}i_{1}}-E_{i_{2}i_{2}})\\
		& =\sum_{k=1}^{l}a_{k}E_{i_{1}i_{1}}\in \{A, E_{i_1j_1},\dots,E_{i_mj_m}\}_{\rm LA}.
	\end{align*}
	Therefore, when ${\rm tr}A\neq 0$, i.e., $\sum_{k=1}^{l}a_{k}\neq0$, we can obtain
	$$E_{i_{1}i_{1}}\in\{A, E_{i_1j_1},\dots,E_{i_mj_m}\}_{\rm LA}.$$
	Again combining (\ref{eq8}) with the proof of Theorem \ref{thm5}, we conclude that
	$$\{A, E_{i_1j_1},\dots,E_{i_mj_m}\}_{\rm LA}=\mathfrak{gl}(n).$$
\end{itemize}
Consequently, the system \eqref{eq:model3} is accessible on ${\rm GL^{+}}(n)$ by Theorem \ref{basic}.
\subsubsection*{F.2 Proof of Necessity  for Theorem \ref{thm6}}
The proof of necessity for Theorem \ref{thm6} is straightforward along  the lines of the proof of necessity for Theorem \ref{thm4} and Proposition \ref{thm5}.

\subsection*{G. Proof of Theorem \ref{thm7}}
First of all, when $\mathcal{G}_{\rm contr}^{*}$ has a weakly connected component which contains at least three nodes, the theorem holds according to Theorem \ref{thm6}.
Thus, in the rest of the proof, we focus on proving the case when $\mathcal{G}_{\rm contr}^{\ast}$ has no weakly connected component which contains at least three nodes, i.e., each weakly connected component of $\mathcal{G}_{\rm contr}^{\ast}$ contains only two nodes.

An analysis similar to that in the proof of sufficiency for Theorem \ref{thm6} shows that the proof is completed by showing that the formula (\ref{eq8}) still holds when each weakly connected component of $\mathcal{G}_{\rm contr}^{\ast}$ contains only two nodes.
In fact, in this situation, a weakly connected component of $\mathcal{G}_{\rm contr}^{\ast}$ containing a self-loop can replace the role of the weakly connected component of $\mathcal{G}_{\rm contr}^{\ast}$ containing at least three nodes.

Recall that $\mathcal{SG}_{\rm contr}$ and $\mathcal{SG}_{\rm drift}$ represent the simple digraphs corresponding to $\mathcal{G}_{\rm contr}^{\ast}$ and $\mathcal{G}_{\rm drift}^{\ast}$, respectively. The digraph $\mathcal{SG}_{\rm valid}$ satisfies: i) all arcs are between different $\mathrm{V}_{i}$; ii) each $\mathrm{V}_{i}$ has at least one node with out-degree greater than zero; iii) each $\mathrm{V}_{i}$ has at least one node with in-degree greater than zero.
It can be seen from the proof of Theorem \ref{thm4} that we need only consider the case when $\mathcal{G}_{\rm contr}^{\ast}$ has two weakly connected components.

Let $\mathcal{SG}_{\rm contr}=\mathcal{SG}_{\rm contr}^{1}\mcup\mathcal{SG}_{\rm contr}^{2}$ and $\mathrm{V}_{1}=\{v_{11},v_{12}\},\mathrm{V}_{2}=\{v_{21},v_{22}\}$.
Since $\mathcal{G}_{\rm contr}^{\ast}$ has a self-loop, without loss of generality we assume $E_{v_{11}v_{11}}\in\{E_{i_1j_1},\dots,E_{i_mj_m}\}$.
For node $v_{11}\in \mathrm{V}_{1}$ of $\mathcal{SG}_{\rm valid}$, there are three possibility for its degree: (i) $\deg^+(v_{11})=\deg^-(v_{11})=0$; (ii) $\deg^-(v_{11})>0$; (iii) $\deg^+(v_{11})>0$.
\begin{itemize}
	\item Proof under Case (i): if $\deg^+(v_{11})=\deg^-(v_{11})=0$, then $\deg^+(v_{12})>0, \deg^-(v_{12})>0$ and the statement holds obviously.
	\item Proof of Case (ii): if $\deg^-(v_{11})=k>0$, we can use the self-loop to come to the desired conclusion. For any $B\in\mathfrak{gl}(n)$, $B$ can be represented in the form of $\sum_{k=1}^{l} b_{k} E_{{i}_k{j}_k}+\sum_{k=l+1}^{r} b_{k} E_{{i}_k{i}_k}$.
	Define the function $\tilde{\varphi}$ that takes a matrix $B\in\mathfrak{gl}(n)$ to a simple digraph $\mathcal{G}_{B}^{\ast}:=(\mathrm{V},\mathcal{E}_{B}^{\ast})$, where $(i_k,j_k)\in\mathcal{E}_{B}^{\ast}$ if and only if $b_{k}\neq 0$ for $1\leq k\leq l$.
	From Lemma \ref{lem6} we have
	$\tilde{\varphi}([\widetilde{A},E_{v_{11}v_{11}}])$ satisfies: i) $\deg^-(v_{11})=k>0$; ii) $\deg^+(v_{12})=0$.
	From the proof of Theorem \ref{thm4} we have
	$$E_{ij}\in\{A,E_{i_1j_1},\dots,E_{i_mj_m}\}_{\rm LA}$$
	hold for all $i\in\mathrm{V}_{1}, j\in\mathrm{V}_{2}$.
	Moreover, by using these new elements can make $\widetilde{A}$ into $\widetilde{A}_{1}$ through a linear combination such that the graph $\tilde{\varphi}(\widetilde{A}_{1})$ only has arcs from nodes in $\mathrm{V}_{2}$ to nodes in $\mathrm{V}_{1}$. Hence $\tilde{\varphi}(\widetilde{A}_{1})$ has a node in $\mathrm{V}_{2}$ with out-degree greater than zero, and the other node in $\mathrm{V}_{2}$ with zero in-degree. Thus, $$E_{ij}\in\{A,E_{i_1j_1},\dots,E_{i_mj_m}\}_{\rm LA}$$
	holds for all $i\in\mathrm{V}_{2}, j\in\mathrm{V}_{1}$, and finally, we have
	\begin{equation*}
		\{A, E_{i_1j_1},\dots,E_{i_mj_m}\}_{\rm LA}\supset\mathfrak{sl}(n).
	\end{equation*}
	\item Proof of Case (iii): if $\deg^+(v_{11})=k>0$, $\tilde{\varphi}([\widetilde{A},E_{v_{11}v_{11}}])$ satisfies: i) $\deg^+(v_{11})=k>0$; ii) $\deg^-(v_{12})=0$. In the same manner as in Case (ii), we can see that  $\{A, E_{i_1j_1},\dots,E_{i_mj_m}\}_{\rm LA}\supset\mathfrak{sl}(n)$ holds.
\end{itemize}

\end{document}